%
%
%




\ifx \processToSlides \undefined
  \ifx\usepackage\RequirePackage    
    \let\usingAmsArtXII\usepackage  
  \fi
\else
  \documentclass[12pt]{amsart}
  \usepackage{amssymb,amscd}
  \usepackage[letterpaper,margin=2cm,includeheadfoot]{geometry}
  \def \useHugeSize {}
  \def \numberingIsThrough {}
\fi

\ifx \labelsONmargin\undefined

  \ifx\RequirePackage\undefined
    \expandafter\ifx\csname amsart.sty\endcsname\relax
        \documentstyle[12pt,amssymb,amscd]{amsart}
    \fi

    \def\atSign{@@}

    \def\mathbb{\Bbb}
    \def\mathfrak{\frak}
    \def\mathbf{\bold}
    \ifx\boldsymbol\undefined   
      \def\boldsymbol#1{{\bold #1}}
    \fi

    \def\mathbit{\boldsymbol}

    \makeatletter
    \newenvironment{proof}{%
         \@ifnextchar[{%
                       \expandafter\let\expandafter\end@proof
                         \csname endpf*\endcsname
                         \my@proof
                      }{\let\end@proof\endpf\pf}%
        }{\end@proof}
    \def\my@proof[#1]{\@nameuse{pf*}{#1}}
    \makeatother

    \def\xrightarrow[#1]#2{@>{#2}>{#1}>}
    \def\xleftarrow[#1]#2{@<{#2}<{#1}<}

    \def\providecommand#1{\def#1}
    \def\emph#1{{\em #1}}
    \def\textbf#1{{\bf #1}}

    \def\mathring{\overset{\,\,{}_\circ}}
  \else
      \ifx\usepackage\RequirePackage
      \else
        \documentclass[12pt]{amsart}
        \usepackage{amssymb,amscd}
    \let\usingAmsArtXII\usepackage
      \fi
      \ifx \mathring\undefined      
        \DeclareMathAccent{\mathring}{\mathalpha}{operators}{"17}
      \fi


      \long\def\FAKEendPROOF{\endtrivlist}
      \ifx\endproof\FAKEendPROOF
      \def\endproof{\qed\endtrivlist}
      \fi


      \ifx\mathbit\undefined    
        \DeclareMathAlphabet{\mathbit}{OML}{cmm}{b}{it}
      \fi

      \def\atSign{@}

      \ifx \usingAmsArtXII \undefined
      \else             
    \advance\oddsidemargin -0.1\textwidth
    \evensidemargin\oddsidemargin
    \textwidth=1.2\textwidth 
      \fi

      \def\Sb#1\endSb{_{\substack{#1}}}
      \def\Sp#1\endSp{^{\substack{#1}}}

  \fi
%
\makeatletter
%
\@ifundefined{DeclareFontShape}
     {\@ifundefined{selectfont}
             {
                \message{We need AmS-LaTeX, this version of LaTeX does not
                        support it}\@@end
             }{
                \def\mathcal{\cal}
                
                \def\pcyr{%
                        \def\default@family{UWCyr}%
                        \let\oldSl@\sl
                        \def\sl{\def\default@shape{it}\oldSl@}%
                        \cyracc
                        \language\Russian\family{UWCyr}\selectfont
                }
             }}{
                \DeclareFontEncoding{OT2}{}{\noaccents@}
                \DeclareFontSubstitution{OT2}{cmr}{m}{n}
                \DeclareFontFamily{OT2}{cmr}{\hyphenchar\font45 }
                \DeclareFontShape{OT2}{cmr}{m}{n}{%
                     <5><6><7><8><9><10>gen*wncyr %
                     <10.95><12><14.4><17.28><20.74><24.88> wncyr10 %
                }{}
                \DeclareFontShape{OT2}{cmr}{m}{it}{%
                     <5><6><7><8><9><10> gen * wncyi%
                     <10.95><12><14.4><17.28><20.74><24.88> wncyi10%
                }{}
                \DeclareFontShape{OT2}{cmr}{bx}{n}{%
                     <5><6><7><8><9><10> gen * wncyb%
                     <10.95><12><14.4><17.28><20.74><24.88> wncyb10%
                }{}
                \DeclareFontShape{OT2}{cmr}{m}{sl}{%
                     <-> ssub * cmr/m/it%
                }{}
                \DeclareFontShape{OT2}{cmr}{m}{sc}{%
                     <5><6><7><8><9><10>%
                     <10.95><12><14.4><17.28><20.74><24.88> wncysc10%
                }{}
                \DeclareFontFamily{OT2}{cmss}{\hyphenchar\font45 }
                \DeclareFontShape{OT2}{cmss}{m}{n}{%
                     <8><9><10> gen * wncyss%
                     <10.95><12><14.4><17.28><20.74><24.88> wncyss10%
                }{}
                
                \def\cyrencodingdefault{OT2}
                \def\pcyr{%
                        \cyracc
                        \let\encodingdefault\cyrencodingdefault
                        \language\Russian\fontencoding{OT2}\selectfont
                }
             }

\@ifundefined{theorembodyfont}
     {
        \def\theorembodyfont#1{\relax}
        \makeatletter
          \let\@@th@plain\th@plain
          \def\th@plain{ \@@th@plain \slshape }
        \makeatother
        \let\normalshape\relax
     }{}

\ifx\cprime\undefined
     \def\cprime{$'$}
\fi
%
%
\makeatletter
%
%
  \def\@sect@my#1#2#3#4#5#6[#7]#8{%
\ifnum #2>\c@secnumdepth
   \let\@svsec\@empty
 \else
   \refstepcounter{#1}%
\edef\@svsec{\ifnum#2<\@m
             \@ifundefined{#1name}{}{\csname #1name\endcsname\ }\fi
\noexpand\rom{\csname the#1\endcsname.}\enspace}\fi
 \@tempskipa #5\relax
 \ifdim \@tempskipa>\z@ 
   \begingroup #6\relax
   \@hangfrom{\hskip #3\relax\@svsec}{\interlinepenalty\@M #8\par}%
   \endgroup
   \if@article\else\csname #1mark\endcsname{%
        \ifnum \c@secnumdepth >#2\relax\csname the#1\endcsname. \fi#7}\fi
\ifnum#2>\@m \else
       \let\@tempf\\ \def\\{\protect\\}\addcontentsline{toc}{#1}%
{\ifnum #2>\c@secnumdepth \else
             \protect\numberline{%
               \ifnum#2<\@m
               \@ifundefined{#1name}{}{\csname #1name\endcsname\ }\fi
               \csname the#1\endcsname.}\fi
           #8}\let\\\@tempf
     \fi
 \else
  \def\@svsechd{#6\hskip #3\@svsec
    \@ifnotempty{#8}{\ignorespaces#8\unskip
       \ifnum\spacefactor<1001.\fi}%
        \ifnum#2>\@m \else
          \let\@tempf\\ \def\\{\protect\\}\addcontentsline{toc}{#1}%
            {\ifnum #2>\c@secnumdepth \else
              \protect\numberline{%
                \ifnum#2<\@m
                \@ifundefined{#1name}{}{\csname #1name\endcsname\ }\fi
                \csname the#1\endcsname.}\fi
             #8}\let\\\@tempf\fi}%
 \fi
\@xsect{#5}}
  \ifx\RequirePackage\undefined
  \let\@sect\@sect@my             
  \fi
%
%
  \ifx\RequirePackage\undefined 
  \def\th@remark@my{\theorempreskipamount6\p@\@plus6\p@
    \theorempostskipamount\theorempreskipamount
    \def\theorem@headerfont{\it}\normalshape}
    \let\th@remark\th@remark@my
  \else             
    \let\o@@remark\th@remark

    \ifx\theorempostskipamount\undefined        
    \else                       
      \def\th@remark{\o@@remark
    \ifdim\theorempostskipamount < 2pt\relax
      \theorempostskipamount\theorempreskipamount
         \multiply\theorempostskipamount\tw@
         \divide\theorempostskipamount\thr@@
    \fi
      }
    \fi
  \fi
%
\let\myLabel\@gobble
\def\labelsONmargin{\@mparswitchfalse\def\myLabel##1{\@bsphack\marginpar
                                  {\normalshape\tiny\rm Label ##1}\@esphack}}
\ifx \url\undefined
  \def\url#1{{\tt #1}}%
\fi
%
%
\def\cyracc{\def\u##1{
                \if \i##1\char"1A%
                \else \if I##1\char"12%
                \else \accent"24 ##1\fi\fi }%
\def\"##1{\if e##1{\char"1B}%
                \else \if E##1{\char"13}%
                \else \accent"7F ##1\fi\fi }%
\def\9##1{\if##1z\char"19
\else\if##1Z\char"11
\else\if##1E\char"03
\else\if##1e\char"0B
\else\if##1u\char"18
\else\if##1U\char"10
\else\if##1A\char"17
\else\if##1a\char"1F
\else\if##1p\char"7E
\else\if##1P\char"5E
\else\if##1Q\char"5F
\else\if##1q\char"7F
\else\if##1i\char"1A
\else\if##1I\char"12
\else\if##1N\char"7D
\fi
\fi
\fi
\fi
\fi
\fi
\fi
\fi
\fi
\fi
\fi
\fi
\fi
\fi
\fi
}%
\def\cydot{{\kern0pt}}}%
\def\cydot{$\cdot$}

\ifx\Russian\undefined
        \def\Russian{0\relax
    \message{Don't know the hyphenation rules for Russian^^J
                        Please do INITeX with `input  russhyph' in the
                        command line}%
                \gdef\Russian{0\relax}%
        }
\fi
%
\ifx\RequirePackage\undefined           
  \def\@putname#1#2#3#4{\def\@@ref{#3}\let\old@bf\bf
        \def\bf##1{\old@bf\if?\noexpand##1?{#4}\else##1\fi}%
    #1{#2}%
        \let\bf\old@bf}
\else                       
  \def\@putname#1#2#3#4{\def\@@ref{#3}\let\old@bf\bf    
    \let\old@reset@font\reset@font          
        \def\bf##1{\old@bf\if?\noexpand##1?{#4}\else##1\fi}%
    \def\reset@font##1##2{\old@reset@font##1\if?\noexpand##2?{#4}\else##2\fi}#1{#2}%
        \let\bf\old@bf\let\reset@font\old@reset@font}
\fi
\let\my@ref=\ref
\def\ref#1{\@putname\my@ref{#1}{#1}{\tiny\rm\@@ref}}
\let\my@pageref=\pageref
\def\pageref#1{\@putname\my@pageref{#1}{#1}{\tiny\rm\@@ref}}
\let\my@cite=\cite
\def\cite#1{\@putname\my@cite{#1}{\@citeb}{\tiny\rm\@@ref}}
\makeatother
\tracingstats=2 \relax
\fi
\relax
\theoremstyle{plain} 
\theorembodyfont{\sl}

\ifx\useDoubleSpacing\undefined
\else
  \usepackage{setspace}\doublespacing
\fi

\ifx \numberingIsThrough \undefined
\numberwithin{equation}{section}
\fi
\input xy
\xyoption{all}
\theorembodyfont{\rm}
\theoremstyle{definition}

\ifx \numberingIsThrough \undefined
\newtheorem{definition}{Definition}[section]
\else
\newtheorem{definition}{Definition}
\fi

\theoremstyle{remark}

\newtheorem{remark}[definition]{Remark} 

\ifx \numberingIsThrough \undefined
\newtheorem{note}{Note}[section] 
\newtheorem{summary}{Summary}[section] 
\else

\fi

\theoremstyle{plain} 
\theorembodyfont{\sl}

\newtheorem{theorem}[definition]{Theorem}
\newtheorem{lemma}[definition]{Lemma}
\newtheorem{corollary}[definition]{Corollary}
\newtheorem{proposition}[definition]{Proposition}


\begin{document}
\bibliographystyle{amsplain}

\ifx\useHugeSize\undefined
\else
\Huge
\fi

\relax

\title[Weight modules]{On weight modules of algebras of twisted differential
operators on the projective space}

\author{ Dimitar Grantcharov and Vera Serganova }


\thanks{D.G is supported by NSA grant H98230-13-1-0245;  V.S. is supported by  NSF grant 0901554}

\address{ Dept. of Mathematics, University of Texas at Arlington,
Arlington, TX 76019 } \email{grandim\atSign{}uta.edu}

\address{ Dept. of Mathematics, University of California at Berkeley,
Berkeley, CA 94720 } \email{serganov\atSign{}math.berkeley.edu}

\maketitle

\begin{abstract}
We classify blocks of categories of weight and generalized weight modules of algebras of twisted differential operators on ${\mathbb P}^n$. Necessary and sufficient conditions for these blocks to be tame and proofs that some of the blocks are Koszul are provided. We also establish equivalences of categories between these blocks and categories of bounded and generalized bounded  weight $\mathfrak{sl} (n+1)$-modules in the cases of nonintegral and singular central character.\\

\noindent 2000 MSC: 17B10 \\

\noindent \keywords{Keywords and phrases}: Lie algebra,
indecomposable representations, quiver, weight modules, twisted differential operators 

\end{abstract}

\section{Introduction}

Algebras of twisted differential operators play important role in  modern representation theory. One fundamental  application of these algebras is the equivalence of categories of modules over a complex semsimple Lie algebra and modules of global sections of algebras of twisted differential operators established by Beilinson and Bernstein, \cite{BB}. In this paper we initiate the study of generalized weight modules of such algebras, i.e. modules that are locally finite with respect to a maximal commutative subalgebra.

The first step in our study is to look at  the category of generalized weight modules over the Weyl algebra $\mathcal D(n+1)$.
We classify the blocks of this category and prove that these blocks
are equivalent to the category of locally nilpotent $G$-graded 
representations of the polynomial algebra $\mathbb C[z_0,\dots,z_n]$,
where $G$ is the direct product of several copies of $\mathbb Z_2$.
Weight modules of the Weyl algebra were studied by Bavula, Bekkert, Benkart, Futorny, among others, \cite{Ba}, \cite{BB}, \cite{BBF}, \cite{GS}.

The original motivation of this study comes from the problem of classifying
blocks of generalized weight modules with bounded set of weight multiplicities over the Lie algebra $\mathfrak{sl}(n+1)$. Such modules are called generalized bounded modules. The natural action of $SL(n+1)$ on
the affine space $\mathbb A^{n+1}$ induces a homomorphism  
$\psi:U(\mathfrak{sl}(n+1)) \to \mathcal D(n+1)$. The image of this
homomorphism lies in the subalgebra 
$\mathcal D^E\subset\mathcal D(n+1)$ of operators
commuting with the Euler vector field $E$. In many cases generalized
bounded weight $\mathfrak{sl}(n+1)$-modules can be obtained from
$\mathcal D^E$-modules via $\psi$. If $Z$ is the center of 
$U(\mathfrak{sl}(n+1))$, then $\psi(Z)$ lies in the subalgebra of $ \mathcal D(n+1)$
generated by $E$. Hence, all $\mathfrak{sl}(n+1)$-modules  with a fixed
central character come from $\mathcal D^E/(E-a)$-modules for a fixed
$a\in\mathbb C$. Note that   $\mathcal D^a=\mathcal D^E/(E-a)$ 
is nothing else but
the algebra of global section of twisted differential operators on the
projective space $\mathbb P^n$. 

It is not hard to see that any indecomposable  generalized weight
module of finite length admits a generalized central
character. Therefore, it is important to study generalized weight
modules over the ring 
$\displaystyle \widetilde{\mathcal D}^a =\lim_{\longleftarrow}\mathcal D^E/(E-a)^n$, 
which itself can be described
as the ring of formal deformations of  $\mathcal D^a$.
We classify the blocks of generalized weight $\widetilde{\mathcal D}^a$-modules 
in terms of quivers with relations. 
Then we prove that the category of generalized weight 
$\mathfrak{sl}(n+1)$-modules 
with bounded weight multiplicities that admit non-integral or singular
integral generalized central character is equivalent to the category
of generalized weight $\widetilde{\mathcal D}^a$-modules for a suitable
choice of $a$. The proof uses variations of the twisted localization construction
originally introduced in \cite{M} and the classification of generalized
weight cuspidal blocks, see \cite{MS}. We also obtain all  classification 
results and equivalences of categories mentioned above in the case of weight modules - those for which the action of the corresponding commutative subalgebra is semisimple.
 
Let us mention that the above result for a non-integral or singular
integral central character is essentially a consequence of the fact
that in this case
all simple bounded weight modules with the same central character
are annihilated by the same primitive ideal. If the central
character is regular integral, then  we have $n+1$ primitive ideals representing annihilators of the modules in the corresponding block. 
This case was solved in \cite{GS2} and in \cite{MS} for cuspidal 
weight modules and for generalized cuspidal weight modules,
respectively. The case of  general bounded modules with regular integral central character remains open in general.

The paper is organized as follows. In Section 3 we collect some important definitions and facts for generalized weight modules of $\mathfrak{sl} (n+1)$ and $\widetilde{\mathcal D}^a$. The quivers and the corresponding algebras that appear in the classification results are introduced in Section 4. The study of generalized weight modules of ${\mathcal D} (n+1)$ is presented in Section 5, while the next section is devoted to the study of these modules over $\displaystyle \widetilde{\mathcal D}^a$. The twisted localization construction is presented in Section 7 and the main results on the equivalence of categories are included in Section 8. Some useful commutative diagrams of functors are collected in the Appendix.

\medskip
\noindent{\it Acknowledgements.}  The authors would like to thank to V. Bekkert for the useful information provided for certain quivers. We also thank L. Hille and A. Polischuk for the fruitful discussions.

\section{Index of notations}
Below we list some notations that are frequently used in the paper under the section number they are  introduced.

\ref{subsec-weight} supp, $M^{\lambda}$, $M^{(\lambda)}$.

\ref{subsec-wtd} $\mathcal D (n+1)$, $\mathcal D$,  $({\mathcal D}, {\mathcal H})$-mod, $^{\rm b}({\mathcal D}, {\mathcal H})$-mod,  $({\mathcal D}, {\mathcal H})_{\nu}$-mod,  $^{\rm b}({\mathcal D}, {\mathcal H})_{\nu}$-mod.

\ref{subsec-wtde} $|\nu|$, $E$, 
$(\mathcal D^E, \mathcal H)^a$-mod, $(\mathcal D^E, \mathcal H)^a_{\nu}$-mod,  $_{\rm s}(\mathcal D^E, \mathcal H)^a$-mod,  $_{\rm s}^{\rm b}(\mathcal D^E, \mathcal H)^a$-mod, $_{\rm s}(\mathcal D^E, \mathcal H)^a_{\nu}$-mod,  $_{\rm s}^{\rm b}(\mathcal D^E, \mathcal H)^a_{\nu}$-mod.

\ref{subsec-wtsl} $\gamma$, $\mathcal{B}$,  $\mathcal{GB}$, $\overline{\mathcal{B}}$, $\overline{\mathcal{GB}}$,   $\mathcal{B}_{\nu}$, $\mathcal{GB}_{\nu}$, $\mathcal{B}^{\lambda}$, $\mathcal{GB}^{\lambda}$, $\mathcal{B}^{\lambda}_{\nu}$, $\mathcal{GB}^{\lambda}_{\nu}$, $\overline{\mathcal{B}}^{\lambda}_{\nu}$, $\overline{\mathcal{GB}}^{\lambda}_{\nu}$.

\ref{subsec-d-weight} $S_{\mathcal H'}$, ${\mathcal B}_{\nu}$, ${\mathcal B} (k)$.

\ref{subsec-def-quivers} $C(k), A(k), A'(k), A''(k), B(k), B'(k), B''(k)$.

\ref{subsec-dn} ${\mathcal F}_{\nu} , {\mathcal F}_{\nu}^{\log} $, $\sigma_J$, $\mathcal I_\nu(J)$, ${\mathcal F}_{\nu}^{\log} (J)$, ${\rm Int} (\nu)$, ${\mathcal P} (\nu)$, ${\mathcal R}_\nu$, ${\mathcal A}_\nu$, $G_k$.

\ref{subsec-de} $\Gamma_a$, $\Phi$, $\pi^a$.

\ref{subsec-deha} ${\mathcal P}_a$, $\Phi'$, $\mathcal R^a_0$.

\ref{subsec-loc-gen} $D_F$, $\Theta_F^{\bf x}$, $\Theta_F^{\bf x}$, $D_F^{\bf x}$.

\ref{subsec-twloc-dh} $D_i^{+}, D_i^{-}$, $D_i^{x,+}, D_i^{x,-}$, $D_{i,j}$, $D_{i,j}^x$.

\ref{subsec-psi} $\psi, \Psi$.

\ref{subsec-gen-case} $S_{\mathfrak{h}}$.

\section{Background}

In this paper the ground field is $\mathbb C$.
All vector spaces, algebras, and tensor
products are assumed to be over $\mathbb C$ unless otherwise
stated.

\subsection {Categories of weight modules of associative algebras} \label{subsec-weight}  Let $\mathcal U$ be a
finitely generated associative unital algebra and $\mathcal H\subset\mathcal U$
be a commutative subalgebra. We assume in addition that 
$\mathcal H=\mathbb C[h_0,\dots,h_n]=S(\mathfrak h)$ 
is a polynomial algebra and
${\rm ad}\, (h) : {\mathcal U} \to {\mathcal U}$ is semisimple for all $h\in\mathfrak h$. In other words, we
have a decomposition
$$\mathcal U=\bigoplus_{\mu\in {{\mathfrak h}^*}}\mathcal U^\mu,$$
where
$$\mathcal U^\mu=\{x\in\mathcal U | [h,x]=\mu(h)x, \forall h\in\mathfrak h\}.$$
Let $Q = {\mathbb Z}\Delta_{\mathcal U}$ be the ${\mathbb Z}$-lattice in  ${\mathfrak h}^*$
generated by $\Delta_{\mathcal U}= \{ \mu \in {\mathfrak h}^* \; | \; U^{\mu} \neq 0\}$. We note that $Q$ is of finite rank since  $\mathcal U$ is finitely
generated. We also obviously have
$U^\mu U^\nu\subset U^{\mu+\nu}$. 

We call a ${\mathcal U}$-module $M$ {\it a generalized weight $({\mathcal U}, {\mathcal H})$-module} if  $M = \bigoplus_{\lambda \in {\mathfrak h}^*} M^{(\lambda)}$, where  
$$
M^{(\lambda)} = \{m\in M | (h_i- \lambda (h_i)\mbox{Id})^N m=0\,\text{for some}\, N>0\, \text{and all}\, i=0,\dots,n\}.
$$
We call $M^{(\lambda)}$ the generalized weight space of $M$ and $\dim
M^{(\lambda)}$ the weight multiplicity of the weight $\lambda$. 
Note that 
\begin{equation}\label{rootweight}
\mathcal U^\mu M^{(\lambda)}\subset M^{(\mu+\lambda)}.
\end{equation}
A generalized weight module $M$ is called a {\it weight module} if $M^{(\lambda)}  = M^{\lambda},$ where 
$$
M^{\lambda} = \{m\in M | (h_i- \lambda(h_i) \mbox{Id}) m=0\,\text{ for all }\, i=0,\dots,n\}.
$$
By $(\mathcal U,\mathcal H) {\rm -mod}$ and  $^{\rm w}(\mathcal U,\mathcal H) {\rm -mod}$ we denote the category of generalized weight modules and weight modules, respectively. Furthermore, by $^{\rm f}(\mathcal U,\mathcal H)$-mod and $^{\rm b}(\mathcal U,\mathcal H)$-mod we denote the subcategories of $(\mathcal U,\mathcal H)$-mod  consisting of modules with finite weight multiplicities and bounded set of weight multiplicities, respectively.  By  $^{\rm wf}(\mathcal U,\mathcal H)$-mod and $^{\rm wb}(\mathcal U,\mathcal H)$-mod we denote the  subcategories of  $^{\rm w}(\mathcal U,\mathcal H) {\rm -mod}$ that are in $^{\rm f}(\mathcal U,\mathcal H) {\rm -mod}$ and  $^{\rm b}(\mathcal U,\mathcal H) {\rm -mod}$, respectively.

For any module $M$ in $(\mathcal U,\mathcal H)$-mod we set 
$$
\mbox{supp} M := \{ \lambda \in {\mathfrak h}^* \; | \; M^{(\lambda)} \neq 0\}
$$
to be the {\it support} of $M$.  It
is clear from (\ref{rootweight}) 
that $\operatorname{Ext}_{\mathcal A}^1(M,N)=0$ if
$(\operatorname{supp}M-\operatorname{supp}N)\cap Q=\emptyset$, where ${\mathcal A}$ is any of the categories of generalized weight modules or weight modules defined above. Then we have

$$(\mathcal U,\mathcal H){ \rm -mod}=\bigoplus_{\overline{\mu} \in {\mathfrak h}^*/Q}(\mathcal U,\mathcal H)_{\overline{\mu}}{\rm -mod},$$ 
where  $(\mathcal U,\mathcal H)_{\overline{\mu}}{\rm -mod}$ denotes
the subcategory of  $(\mathcal D,\mathcal H)$-mod consisting of
modules $M$ with $\operatorname{supp}M\subset \overline{\mu} = \mu+ Q$. We similarly define $^{\rm w}(\mathcal U,\mathcal H)_{\overline{\mu}}{\rm -mod}$, $^{\rm f}(\mathcal U,\mathcal H)_{\overline{\mu}}{\rm -mod}$, $^{\rm b}(\mathcal U,\mathcal H)_{\overline{\mu}}{\rm -mod}$, $^{\rm wf}(\mathcal U,\mathcal H)_{\overline{\mu}}{\rm -mod}$, and $^{\rm wb}(\mathcal U,\mathcal H)_{\overline{\mu}}{\rm -mod}$, and obtain the corresponding support composition where the direct summands are parametrized by elements of ${\mathfrak h}^*/Q$. With a slight abuse of notation, for $\mu \in {\mathfrak h}^{*}$ we set $(\mathcal U,\mathcal H)_{\mu}{\rm -mod}  = (\mathcal U,\mathcal H)_{\overline{\mu}}{\rm -mod}$, etc.

\subsection{Weight ${\mathcal D}$-modules}  \label{subsec-wtd}
Let $\mathcal D(n+1)$ be the Weyl algebra, i.e. the algebra of
differential operators of the ring $\mathbb C[t_0,\dots,t_n]$ and consider $\mathcal U=\mathcal D(n+1)$. 
 When $n\geq 0$ is fixed, we
use the notation $\mathcal D$ for  $\mathcal D(n+1)$. Let
$\mathcal H:=\mathbb C[t_0\partial_0,\dots,t_n\partial_n]$. 
Then $\mathcal H$ is a maximal commutative subalgebra in $\mathcal D$.
Note that the adjoint action of the abelian Lie subalgebra ${\rm Span}(t_0\partial_0,\dots,t_n\partial_n)$ on $\mathcal D$ is
semisimple. We identify ${\mathbb C}^{n+1}$ with the dual space of $\mbox{Span} \{ t_0\partial_0,...,t_n \partial_n \}$, and fix $ \{ \varepsilon_{0},\dots ,\varepsilon_{n} \}$ to be the standard basis of this space, i.e.  $\varepsilon_i (t_j \partial_j) = \delta_{ij}$. Then $Q = \bigoplus_{i=0}^n {\mathbb Z} \varepsilon_i$ is identified with ${\mathbb Z}^{n+1}$, and
\begin{equation*}
\mathcal D=\bigoplus_{\mu\in \mathbb Z^{n+1}}\mathcal D^\mu.
\end{equation*}
Here $\mathcal D^0=\mathcal H$ and each $\mathcal D^\mu$ is a free
left $\mathcal H$-module of rank $1$ with generator 
$\prod_{\mu_i\geq 0}t_i^{\mu_i}\prod_{\mu_j<0}\partial_j^{-\mu_j}$. 

Therefore $^{\rm b}({\mathcal D}, {\mathcal H}){\rm -mod} = {}^{\rm
f}({\mathcal D}, {\mathcal H}){\rm -mod}$ and $^{\rm wb}({\mathcal D},
{\mathcal H}){\rm -mod} ={}^{\rm wf}({\mathcal D}, {\mathcal H}){\rm -mod}$. 
The latter category was studied in \cite{BBF} and \cite{GS} and the former in \cite{BaBe}.

The support of every $({\mathcal D}, {\mathcal H})$-module will be considered as a subset of ${\mathbb C}^{n+1}$ and we have a natural decomposition
$$(\mathcal D,\mathcal H){ \rm -mod}=\bigoplus_{\overline{\nu} \in {{\mathbb C}^{n+1}}/ {\mathbb Z}^{n+1}}(\mathcal D,\mathcal H)_{\overline{\nu}}{\rm -mod},$$ 

As before for $\nu \in {\mathbb C}^{n+1}$ we write $(\mathcal D,\mathcal H)_{\nu}{\rm -mod} = (\mathcal D,\mathcal H)_{\overline{\nu}}{\rm -mod}$. The same applies for the subcategories $^{\rm w}(\mathcal D,\mathcal H)_{\overline{\nu}}{\rm -mod}$, $^{\rm b}(\mathcal D,\mathcal H)_{\overline{\nu}}{\rm -mod} = {}^{\rm f}(\mathcal D,\mathcal H)_{\overline{\nu}}{\rm -mod}$, and $^{\rm wb}(\mathcal D,\mathcal H)_{\overline{\nu}}{\rm -mod} = {}^{\rm wf}(\mathcal D,\mathcal H)_{\overline{\nu}}{\rm -mod}$.

\subsection{Weight ${\mathcal D}^E$-modules} \label{subsec-wtde}

In this subsection we assume $n\geq 1$.
Let $E=\sum_{i=0}^n t_i\partial_i$ be the Euler vector field.
Denote by $\mathcal D^E$ the centralizer of $E$ in $\mathcal D$. 
Note that $\mathcal D$ has a $\mathbb Z$-grading
$\mathcal D=\bigoplus_{m \in {\mathbb Z}} \mathcal D^m$, where 
$\mathcal D^m=\{d\in \mathcal D | [E,d]=md\}$. It is not hard to see
that the center of $\mathcal D^E$ is generated by $E$.  The quotient
algebra $\mathcal D^E/(E-a)$ is the
algebra of global sections of twisted differential operators on 
$\mathbb P^n$.

Let
$a \in \mathbb C$, let $(\mathcal D^E, \mathcal H)^{a}$-mod  be the 
category of generalized weight
$\mathcal D^E$-modules with locally nilpotent action of $E - a$ and
$^{\rm b}(\mathcal D^E, \mathcal H)^{a}$-mod be the subcategory of
$^{\rm b}(\mathcal D^E, \mathcal H)^{a}$-mod  consisting of modules with
finite weight multiplicities.
We have again a decomposition
$$ (\mathcal D^E, \mathcal H)^{a}{\rm -mod}=\bigoplus_{|\nu|=a}
(\mathcal D^E, \mathcal H)^{a}_{\nu}{\rm -mod},$$
where $(\mathcal D^E, \mathcal H)^{a}_{\nu}{\rm -mod}$ is the
subcategory of modules with support in 
$\nu+\sum_{i=0}^{n-1}\mathbb Z(\varepsilon_i-\varepsilon_{i+1})$ and $|\nu|:=\sum_{i=0}^n\nu_i$.

Let $\mathcal H'$ be the
subalgebra of $\mathcal D$ generated by $t_i\partial_i-t_j\partial_j$.
We denote by $_{\rm s}(\mathcal D^E, \mathcal H)$-mod  (respectively, $^{\rm b}_{\rm s}(\mathcal D^E, \mathcal H)$-mod) the the subcategory  
of $(\mathcal D^E, \mathcal H)$-mod  (resp., the subcategory  
of $^{\rm b}(\mathcal D^E, \mathcal H)$-mod) consisting of all modules
semisimple over $\mathcal H'$. Similarly we define the categories  $_{\rm s}(\mathcal D^E, \mathcal H)^a$-mod,  $_{\rm s}^{\rm b}(\mathcal D^E, \mathcal H)^a$-mod, $_{\rm s}(\mathcal D^E, \mathcal H)^a_{\nu}$-mod, and $_{\rm s}^{\rm b}(\mathcal D^E, \mathcal H)^a_{\nu}$-mod.

\subsection{Weight $\mathfrak{sl}(n+1)$-modules} \label{subsec-wtsl}
Let ${\mathfrak g}=\mathfrak{sl}\left(n+1\right) $ and $U= U(\mathfrak{g})$ be its universal enveloping algebra.  
We fix a Cartan subalgebra $ {\mathfrak h}$ of ${\mathfrak
g} $ and denote by $(\, , )$ the Killing form on
$\mathfrak g$. We apply the setting of \S \ref{subsec-weight} with
${\mathcal U} = U$ and $\mathcal H =
S({\mathfrak h})$. We will use the following notation: $\mathcal{GB} =
{}^{\rm b}({\mathcal U}, {\mathcal H}){\rm - mod}$, ${\mathcal B} =
{}^{\rm wb}({\mathcal U}, {\mathcal H}){\rm - mod}$,
$\mathcal{GB}_{\mu} =\mathcal{GB}_{\overline{\mu}} = {}^{\rm b}({\mathcal U}, {\mathcal
H})_{\overline{\mu}}{\rm - mod}$, and $\mathcal{B}_{\mu} = \mathcal{B}_{\overline{\mu}} =
{}^{\rm wb}({\mathcal U}, {\mathcal H})_{\overline{\mu}}{\rm - mod}$. 

A generalized weight module $M$ with finite weight multiplicities will
be called a {\it generalized cuspidal module} if the elements of the root space 
${\mathfrak g}^{\alpha}$ act injectively (and hence bijectively) on
$M$ for all roots $\alpha$ of $\mathfrak g$. 
If $M$ is a weight cuspidal module we will call it simply {\it cuspidal module}.
By $\mathcal{GC}$ and ${\mathcal C}$ we will denote the categories of generalized cuspidal and cuspidal modules, respectively, and the corresponding subcategories defined by the supports will be denoted by $\mathcal{GC}_{\overline{\mu}}$ and ${\mathcal C}_{\overline{\mu}}$. One should note that the simple objects of ${\mathcal B}$ and $\mathcal{GB}$ (as well as those of ${\mathcal C}$ and $\mathcal{GC}$) coincide.

The induced form on ${\mathfrak h}^*$ will be
denoted by $(\, ,)$ as well.  We have that $ {\mathfrak h}^{*} $ is identified with the
subspace of $ {\mathbb C}^{n+1} $ spanned by the simple roots 
$\varepsilon_{0}-\varepsilon_{1},\dots
,\varepsilon_{n-1}-\varepsilon_{n} $.  By $ \gamma $ we
denote the projection $ {\mathbb C}^{n+1} \to
{\mathfrak h}^{*} $ with one-dimensional kernel 
$\mathbb C(\varepsilon_0+\dots+\varepsilon_n)$. 
In this case $ Q \subset {\mathfrak h}^{*}$ is the root lattice. 
By $W$ we denote the Weyl group of ${\mathfrak g}$. Denote by $Z:=Z(U)$ the center of $U$
and let $Z':=\mbox{Hom} (Z, \mathbb C)$ be the set of all central
characters (here $\mbox{Hom}$ stands for 
homomorphisms of unital $\mathbb C$-algebras). By $\chi_{\lambda}\in Z'$ we
denote the central character of the irreducible highest weight
module with highest weight $\lambda$. Recall that
$\chi_{\lambda}=\chi_{\mu}$ iff $\lambda+\rho=w(\mu+\rho)$ for
some element $w$ of the Weyl group $W$, where, as usual, $\rho$
denotes the half-sum of positive roots. We say that
$\chi=\chi_{\lambda}$ is {\it regular} if the stabilizer of
$\lambda+\rho$ in $W$ is trivial (otherwise $\chi$ is called {\it
singular}), and that $\chi=\chi_{\lambda}$ is {\it integral} if
$\lambda$ is in the weight lattice (i.e. in the lattice spanned by the 
 fundamental weights $\gamma(\varepsilon_0+...+\varepsilon_{i-1})$, $i=1,...,n$). We say that two weights $\lambda$ and $\nu\in
\lambda+\Lambda$ are in the same Weyl chamber if for any positive
root $\alpha$ such that $(\lambda,\alpha)\in \mathbb Z$,
$(\lambda,\alpha)\in \mathbb Z_{\geq 0}$ if and only if
$(\mu,\alpha)\in \mathbb Z_{\geq
  0}$. Finally, recall that $\lambda$ is {\it dominant integral} if
$(\lambda,\alpha)\in \mathbb Z_{\geq 0}$ for all positive roots $\alpha$.

One should note that   every generalized bounded
module has finite Jordan--H\"older series (see Lemma 3.3 in \cite{M}). Since the center $Z$ of $U$
preserves weight spaces, it acts locally finitely on the generalized bounded
modules. For every central character $ \chi\in Z'$ let $\mathcal{GB}^{\chi} $ (respectively, $\mathcal{B}^{\chi}, \mathcal{GC}^{\chi}, \mathcal{C}^{\chi}$) denote the category of all generalized bounded modules (respectively, bounded, generalized cuspidal, cuspidal) modules $ M $ with
generalized central character $\chi$, i.e. such that for some $n\left(M\right),
\left(z-\chi\left(z\right)\right)^{n\left(M\right)}=0 $ on $M$ for
all $ z\in Z$. It is clear that every generalized bounded module $ M $ is a
direct sum of finitely many $ M_{i}\in \mathcal{GB}^{\chi_{i}} $. Thus, one
can write

\begin{equation}
\mathcal{GB}=\bigoplus_{\chi \in Z' \atop \bar{\mu} \in {\mathfrak
h}^*/Q}\mathcal{GB}^{\chi}_{\bar{\mu}},\;  \mathcal{GC}=\bigoplus_{\chi \in Z' \atop \bar{\mu} \in {\mathfrak h}^*/Q}\mathcal{GC}^{\chi}_{\bar{\mu}}, \; \mathcal{B}=\bigoplus_{\chi \in Z' \atop  \bar{\mu} \in {\mathfrak h}^*/Q}\mathcal{B}^{\chi}_{\bar{\mu}}, \; \mathcal{C}=\bigoplus_{\chi \in Z' \atop \bar{\mu} \in {\mathfrak h}^*/Q}\mathcal{C}^{\chi}_{\bar{\mu}},
\notag\end{equation} where
$\mathcal{GB}^{\chi}_{\bar{\mu}} =\mathcal{GB}^{\chi}  \cap \mathcal{GB}_{\bar{\mu}}$, etc.  Note that many of the direct summands above are trivial. 

By $ \chi_{\lambda} $ we denote the central character of the simple
highest weight $\mathfrak g$-module with highest weight $ \lambda $. For
simplicity we put $\mathcal{GB}^{\lambda}:=\mathcal{
GB}^{\chi_{\lambda}} $, $ \mathcal{
GB}^{\lambda}_{\bar{\mu}}:= \mathcal{GB}^{\chi_{\lambda}}_{\bar{\mu}}$, etc.

Let $ \overline{\mathcal{B}}$ (respectively, $ \overline{\mathcal{GB}}$) be the full subcategory of all  weight modules (respectively, generalized weight modules)
consisting of $\mathfrak g$-modules 
$ M $ whose  finitely generated submodules
 belong to $ {\mathcal B}$ (respectively, ${\mathcal GB}$). It is not hard to see
that every such $M$ is a direct limit $\displaystyle\lim_{\longrightarrow}M_i$
for some directed system $\{M_i | i\in I\}$ such that each $M_i\in\mathcal{
GB}$ (respectively, $M_i \in {\mathcal B}$).
It implies that the action of the center $Z$ of the universal
enveloping algebra $U$ on $M$ is locally finite and we have
decompositions
\begin{equation}
\overline{\mathcal B}=\bigoplus_{\chi \in Z',\atop \bar{\mu} \in {\mathfrak
h}^*/Q}\overline{\mathcal B}^{\chi}_{\bar{\mu}}, \; \overline{\mathcal{GB}}=\bigoplus_{\chi \in Z' \atop \bar{\mu} \in {\mathfrak
h}^*/Q}\overline{\mathcal{GB}}^{\chi}_{\overline{\mu}}. \notag\end{equation}

 In a similar way we define $ \overline{\mathcal{C}}$ and $ \overline{\mathcal{GC}}$ and obtain their block decompositions. Finally, we set $ \overline{\mathcal{GB}}^{\lambda}_{\bar{\mu}}:=\overline{\mathcal{GB}}^{\chi_{\lambda}}_{\bar{\mu}}$,  $ \overline{\mathcal
B}^{\lambda}_{\bar{\mu}}:=\overline{\mathcal
B}^{\chi_{\lambda}}_{\bar{\mu}}$, etc. For a detailed discussion of $\overline{\mathcal
C}$ we refer the reader to \S 5 of \cite{GS2}.

\section{Some quivers and algebras related to hypercubes} 

In this section we define certain pointed algebras which describe
blocks of categories which we study in the paper.

\subsection{Definitions} \label{subsec-def-quivers}
Let $k\geq 1$ and $C(k)$ be the quiver of the skeleton of the
$k$-dimensional cube: the vertices of the quiver are the same as
those of the cube. Arrows of $C(k)$
correspond to the edges of the cube, each edge  gives rise
to two arrows  in $C(k)$ with opposite orientation. 

We define an
equivalence relation on the paths of $C(k)$ as follows. Two paths with
the same beginning and end and of the same length are equivalent. Let
$B(k)$ be the quotient of $\mathbb C[C(k)]$ by the relations
identifying all equivalent paths.

For instance, the quiver for $B(2)$ is
$$
\xymatrix{\bullet  \ar@<0.5ex>[d]^{\delta_2} \ar@<0.5ex>[r]^{\alpha_1}& \bullet \ar@<0.5ex>[d]^{\beta_1} 
\ar@<0.5ex>[l]^{\alpha_2} \\
\bullet  \ar@<0.5ex>[u]^{\delta_1} \ar@<0.5ex>[r]^{\gamma_2}& \bullet
\ar@<0.5ex>[l]^{\gamma_1} \ar@<0.5ex>[u]^{\beta_2}
}
$$
with relations
$\beta_1 \alpha_1 =  \gamma_2 \delta_2, \alpha_2 \alpha_1= \delta_1 \delta_2,...$ etc.

We define the algebras $B'(k)$ for  $k\geq 2$ (respectively, 
$B''(k)$ for $k\geq 3$)
as the path algebras of $C(k)$ with one vertex
(respectively, two opposite vertices) removed subject to the same relations
described above. 
By $B''(2)$ we denote the path algebra of the quiver 
 $$
\xymatrix{\bullet  \ar@<0.5ex>[r]^{}& \bullet
\ar@<0.5ex>[l]^{} }
$$
Note that $B''(2)=B(1)$ by definition.

Next we 
color the arrows of  $C(k)$ by $k$ colors, in such a way, that two arrows have the same
color if and only if the corresponding edges of the cube are parallel. As a result, to every path  in $C(k)$ we associate a sequence of  colors. 
We
call two paths $p$ and $q$ equivalent if 
they have the same beginning and end and
the sequence of colors  of $p$ is obtained from that of $q$ by some permutation.
Let $A(k)$ be the quotient of the path algebra $\mathbb C[C(k)]$ 
by the relations
identifying all equivalent colored paths.

For $k\geq 2$ let $C'(k)$ be obtained from $C(k)$ by removing one vertex from
$C(k)$ and attaching a loop to each vertex adjacent to the removed
one. In this way each loop at a vertex $v$ is a 
replacement of the path of length $2$
from $v$ to the ghost (removed) vertex and back. We again
associate to each path a sequence of colors with the convention that to the
loop at $v$ we associate  the sequence $(c,c)$, where $c$ is the color of
the edge joining $v$ with the ghost vertex. Introduce the equivalence
relation on the paths of $C'(k)$ in the same manner as above and
define $A'(k)$ be the quotient of the path algebra $\mathbb C[C'(k)]$ by the relations
identifying all equivalent paths.

For example $A'(2)$ is the quotient of the path algebra of the quiver
$$
\xymatrix{\bullet  \ar@(ul,ur)|{\alpha} \ar@<0.5ex>[r]^{\gamma}& \bullet
\ar@<0.5ex>[l]^{\beta} \ar@<0.5ex>[r]^{\delta}& \bullet
\ar@<0.5ex>[l]^{\varepsilon} \ar@(ul,ur)[]|{\phi}}
$$
by the  relations 
$$
\alpha \beta \gamma = \beta \gamma \alpha, \, 
\gamma \beta \varepsilon \delta = \varepsilon \delta \gamma \beta, \, 
\delta\varepsilon \phi = \phi \delta\varepsilon.
$$

If $k\geq 3$ we consider also the quiver $C''(k)$ obtained from
$C(k)$ by removing two opposite vertices and attaching loops
to the vertices adjacent to the removed ones. 
We define the coloring of paths and construct the algebra  $A''(k)$ like in the previous case.

For  $k=2$ define $C''(2)$ to be the quiver 
$$
\xymatrix{\bullet \ar@(dl,dr)|{\beta}  \ar@(ul,ur)|{\alpha} \ar@<0.5ex>[r]^{\gamma}& \bullet
\ar@<0.5ex>[l]^{\delta} \ar@(dl,dr)|{\phi} \ar@(ul,ur)[]|{\varepsilon}}
$$
By $A''(2)$ we denote the quotient of  path algebra of $C''(2)$ by the relations
$\alpha \beta = \beta \alpha = \delta \gamma$, $\varepsilon \phi = \phi \varepsilon = \gamma \delta$. 

\subsection{Tameness}
\begin{proposition}\label{wildweight1}  

(a) $B(k)$ is tame if and
only if $k=1,2$.

(b)  $B'(k)$ is tame if and
only if $k=2$.

(c) $B''(k)$ is tame if and
only if $k=2$.

\end{proposition}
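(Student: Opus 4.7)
The proposition has six parts: tameness of $B(1)=B''(2)$, $B(2)$, $B'(2)$, and wildness of $B(k), B'(k), B''(k)$ for $k\geq 3$. The general strategy is standard for representation-type problems: for tameness, identify the algebra (perhaps up to Morita or derived equivalence) with a known tame class such as a string or special biserial algebra, or a preprojective algebra of an affine Dynkin graph; for wildness, exhibit an explicit wild quotient.

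I would first dispose of $B(1)=B''(2)$, for which the defining equivalence is trivial: in the quiver $\bullet\rightleftarrows\bullet$ there is a unique path of each given length between any two vertices. The underlying multigraph is the affine Dynkin diagram $\tilde A_1$, and so the classical Gabriel--Donovan--Freislich classification yields tameness. For $B(2)$ and $B'(2)$ I would exploit the local structure: at each vertex $v$ of $C(2)$ the corner algebra $e_v B(2) e_v$ is the polynomial ring $\mathbb C[x_v]$ on the class of the unique length-two loop at $v$, and the quiver arrows intertwine these $x$-actions across adjacent vertices. This $\mathbb C[x]$-linearity reduces the classification of finite-dimensional indecomposables, via generalised Jordan decomposition in each $x_v$, to a representation problem for a finite-dimensional quotient that one checks is special biserial (a string algebra after the obvious identifications), whereupon the standard string/band classification gives tameness. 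An alternative route is to identify $B(2)$ with a quotient of the preprojective algebra $\Pi(\tilde A_3)$ of the 4-cycle and invoke its known tameness. For $B'(2)$, removal of one vertex breaks the 4-cycle and yields a finite-dimensional string algebra for which tameness is direct.

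For wildness when $k\geq 3$ I would produce an explicit wild quotient in each case. Choose a vertex $v$ of the relevant quiver ($C(k)$, $C'(k)$, or $C''(k)$) at distance at least two from any removed vertex. Killing all arrows outside the closed star of $v$ together with all paths of length greater than two yields a finite-dimensional local algebra with $k$ outgoing and $k$ incoming arrows at $v$, subject to a single relation identifying the length-two loops at $v$. For $k\geq 3$ this local piece is of wild representation type; one verifies this by embedding the module category of the free algebra $\mathbb C\langle X,Y\rangle$ into its module category, distributing the free generators across three distinct neighbours of $v$. Wildness propagates from this quotient back to the ambient algebra.

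The principal obstacle is the tame direction for $B(2)$ and $B'(2)$: these algebras are infinite-dimensional, with nontrivial interaction between the $\mathbb C[x]$-actions at different vertices and the quiver arrows, so the reduction to a tame finite-dimensional problem requires careful bookkeeping and likely the kind of detailed quiver analysis alluded to in the acknowledgement to V.~Bekkert. Once the reduction is in place, the wildness assertions follow more mechanically from the local-star analysis and standard dichotomy criteria.
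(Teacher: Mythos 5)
Your overall strategy---find a known tame class for the tame direction, exhibit a wild quotient for the wild direction---mirrors the paper's, but two of the steps have concrete problems, and one of them is fatal to the proof.

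The most serious gap is the wildness of $B''(3)$, which your ``local star'' argument cannot reach. The quiver $C''(3)$ is the $3$-cube with two \emph{antipodal} vertices deleted; what remains is a hexagon (a $6$-cycle), so every surviving vertex has degree $2$, and every surviving vertex is adjacent to one of the two ghost vertices. Thus there is no vertex with $k\ge 3$ incoming and outgoing arrows, and no vertex at distance $\ge 2$ from the removed ones, so your construction never produces a candidate local algebra in this case. This is exactly why the paper handles $B''(3)$ separately and with a genuinely different device: it passes to $E=B''(3)/\operatorname{rad}^3(B''(3))$, takes the universal cover of $E$ in the sense of Gabriel, and exhibits a wild subquiver (with commutativity relations) of that cover. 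Without something replacing this step, the ``only if'' in part (c) is unproved. Even in the cases $B'(k)$, $k\ge 3$, and $B''(\ell)$, $\ell\ge 4$, where a suitable vertex exists, you should be cautious: the star algebra with relation $b_1a_1=\cdots=b_ka_k$ and $\operatorname{rad}^3=0$ is not \emph{obviously} wild, and the promised embedding of $\mathbb C\langle X,Y\rangle$-mod ``by distributing free generators across neighbours'' is asserted rather than carried out. The paper instead kills enough arrows and vertices to land on a specific five-vertex hereditary quiver whose wildness is known.

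A smaller but genuine error: $B'(2)$ is not finite-dimensional. Its quiver is $\bullet\rightleftarrows\bullet\rightleftarrows\bullet$, and the length-two loops at each vertex (e.g.\ the identified class at the middle vertex) are not nilpotent; each corner algebra is a polynomial ring. So it is not a finite-dimensional string algebra, and tameness does not follow ``directly.'' The paper treats this as a known fact with a reference to Erdmann's book. Similarly for $B(2)$, your reduction via ``generalised Jordan decomposition in each $x_v$'' to a finite-dimensional special biserial quotient is not actually carried out, and your alternative identification with a quotient of $\Pi(\tilde A_3)$ would still need a justification that the relevant preprojective algebra is tame on nilpotent modules, which is not a freebie. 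The paper avoids both issues by recognizing $B(2)$ directly as a \emph{skewed-gentle} algebra (take the quiver of type $A_1^{(1)}$ with $\alpha\beta=\beta\alpha=0$ and declare both vertices special) and invoking Geiss--de la Pe\~na.
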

\begin{proof} Observe that $B(1)$ is the path algebra of the quiver
$A_1^{(1)}$, which is tame  (see \cite{DF}, \cite{N}). 
To prove the tameness of $B(2)$ we show that it is a skewed-gentle algebra
which is tame by \cite{GP} (see also \cite{BMM}). To prove that it is
skewed-gentle, we  consider the quiver of type $A_1^{(1)}$ with the
relations $\alpha\beta=\beta\alpha=0$, where $\alpha$ and $\beta$ are
two arrows of the quiver, and set both vertices to be special. Then
the corresponding skewed-gentle algebra coincides with $B(2)$.

 The wildness of 
$B'(k)$ and $B''(\ell)$ for $k \geq 3$ and $\ell\geq 4$ 
follows from the fact that both algebras have a quotient algebra 
which is isomorphic to the path algebra of the quiver $$
\xymatrix{\bullet  \ar[r] \ar[d]& \bullet &  \bullet \ar[l] \\
\bullet   & \bullet  \ar[l] \ar[u] & }
$$
without relations. The tameness of $B'(2)$ is a standard fact - see for example \S
VIII.7.9 in \cite{Er}. 

It remains to show that $B''(3)$ is wild. 
The quiver for $B''(3)$ is
$$
\xymatrix{&\bullet  \ar@<0.5ex>[dl]^{\varphi_2} \ar@<0.5ex>[r]^{\alpha_1}& \bullet \ar@<0.5ex>[dr]^{\beta_1} 
\ar@<0.5ex>[l]^{\alpha_2} & \\
\bullet  \ar@<0.5ex>[ur]^{\varphi_1} \ar@<0.5ex>[dr]^{\varepsilon_2}& && \bullet
\ar@<0.5ex>[dl]^{\gamma_1} \ar@<0.5ex>[ul]^{\beta_2}\\
&\bullet  \ar@<0.5ex>[ul]^{\varepsilon_1} \ar@<0.5ex>[r]^{\delta_2}& \bullet \ar@<0.5ex>[ur]^{\gamma_2} 
\ar@<0.5ex>[l]^{\delta_1} & 
}
$$
subject to the relations  that every two paths with the same starting and end point and of same length are equal.
We claim that the algebra $E=B''(3)/\operatorname{rad}^3(B''(3))$ is wild.
To show this we consider the universal cover for $E$, see \cite{G}, and
notice that it has the following subquiver
$$
\xymatrix{&\bullet\ar@<0.5ex>[dl]^{\varphi_2}\ar@<0.5ex>[dr]^{\alpha_1}&&\bullet\ar@<0.5ex>[dl]^{\beta_2}\ar@<0.5ex>[dr]^{\gamma_1}&&\bullet\\
\bullet\ar@<0.5ex>[dr]^{\varphi_1}&&\bullet\ar@<0.5ex>[dl]^{\alpha_2}\ar@<0.5ex>[dr]^{\beta_1}&&\bullet\ar@<0.5ex>[dl]^{\gamma_2}\ar@<0.5ex>[ur]^{\delta_1}\\
&\bullet&&\bullet&&& 
}
$$
subject to the relations $\varphi_1\varphi_2=\alpha_2\alpha_1, \beta_1\beta_2=\gamma_2\gamma_1$. This quiver is wild, see for instance 
\cite{U}. Hence $E$ and $B''(3)$ are wild.
\end{proof}

\begin{proposition}\label{wildweight2}  
$A(k)$, $A'(k)$ and $A''(k)$ are wild for any $k\geq 2$, while
$A(1)$ is tame.
\end{proposition}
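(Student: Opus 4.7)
The strategy splits into verifying that $A(1)$ is tame and establishing wildness of $A(k),A'(k),A''(k)$ for $k\ge 2$ via a common corner-algebra argument.

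For $A(1)$: the quiver $C(1)$ is the Kronecker quiver $A_1^{(1)}$ with a single color, so the color-multiset relation reduces to ``same length and same endpoints.'' But in $A_1^{(1)}$ any two paths between the two vertices of the same length automatically coincide (they must alternate between the two arrows), so no nontrivial relations are imposed, and $A(1)$ is the hereditary path algebra of $A_1^{(1)}$, which is classically tame.

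For $k\ge 2$, the key structural fact I would establish is that for any vertex $v$ of $C(k)$ (and, in the cases $A'(k),A''(k)$, for any vertex not adjacent to a ghost, which exists when $k\ge 2$) the corner algebra $e_v A(k)e_v$ is isomorphic to the polynomial ring $\mathbb{C}[e_1,\ldots,e_k]$, where $e_i$ denotes the equivalence class of the length-$2$ round trip at $v$ in direction $i$. Indeed, every closed path at $v$ must use each color an even number of times, two such paths with the same color multiset are identified, and for each multi-index $(a_1,\ldots,a_k)\in\mathbb{Z}_{\ge 0}^k$ there is exactly one equivalence class with multiset $(c_1^{2a_1},\ldots,c_k^{2a_k})$, matching the monomial basis of the polynomial ring. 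The ghost-loops in $A'(k),A''(k)$ share color multisets with the two-step paths they replace, so this identification is unaffected at interior vertices.

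Wildness then follows by a standard quotient-to-corner argument. Let $J\subset A(k)$ denote the ideal generated by the arrows; then $J^n$ is spanned by paths of length $\ge n$ modulo relations, and the quotient $R := A(k)/J^6$ is finite-dimensional. Since each $e_i$ has path-length $2$, the induced corner at $v$ becomes $e_v R e_v \cong \mathbb{C}[e_1,\ldots,e_k]/(e_1,\ldots,e_k)^3$, a commutative local algebra whose socle $\mathfrak{m}^2/\mathfrak{m}^3$ has dimension $\binom{k+1}{2}\ge 3$ when $k\ge 2$; for $k=2$ this is the standard wild $6$-dimensional algebra $\mathbb{C}[x,y]/(x,y)^3$, and more generally it lies outside the classical list of tame commutative local algebras. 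Since wildness of a corner $eBe$ forces wildness of $B$---via the fully faithful adjoint $Be\otimes_{eBe}(-)\colon eBe\text{-mod}\hookrightarrow B\text{-mod}$---we conclude that $R$, and hence $A(k)$, is wild; the same argument applies to $A'(k)$ and $A''(k)$. The chief technical difficulty is the corner-algebra identification: one must verify by a combinatorial straightening argument on paths that every closed path at $v$, including long ones meandering through the cube, is equivalent to a unique monomial in the $e_i$'s, and confirm in $A'(k),A''(k)$ that ghost-loops induce no extra identifications at vertices not adjacent to a ghost.
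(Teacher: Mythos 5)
Your corner-algebra strategy is genuinely different from the paper's, which instead exhibits, for each of $A(2)$, $A'(2)$, $A''(2)$, an explicit quotient isomorphic to the path algebra of a small wild quiver, and then for $k>2$ observes that $A(k)$, $A'(k)$, $A''(k)$ surject onto $A'(2)$. Your identification $e_vA(k)e_v\cong\mathbb C[x_1,\dots,x_k]$ is correct, and the quotient by $J^6$ does yield the corner $\mathbb C[x_1,\dots,x_k]/\mathfrak m^3$, which is wild for $k\ge 2$; combined with the standard fact that an idempotent subalgebra $eBe$ of a tame algebra $B$ is tame, this handles $A(k)$ for $k\ge 2$ and $A'(k)$ for $k\ge 2$ (at the vertex antipodal to the ghost). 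The $A(1)$ case is also fine.

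However, there is a genuine gap for $A''(2)$ and $A''(3)$. You require a vertex not adjacent to any ghost and assert one exists whenever $k\ge 2$, but this is false: in $C''(2)$ both surviving vertices are adjacent to both ghosts, and in $C''(3)$ all six surviving vertices are adjacent to exactly one of the two antipodal ghosts. Worse, the paper defines $C''(2)$ and $A''(2)$ by an ad hoc presentation (two vertices joined by an arrow in each direction, with two loops at each vertex, and relations $\alpha\beta=\beta\alpha=\delta\gamma$, $\varepsilon\phi=\phi\varepsilon=\gamma\delta$), not by the general recipe, so any proof for $A''(2)$ must engage with that explicit quiver, which your argument does not do. The $A''(3)$ case can plausibly be repaired by working at a vertex adjacent to one ghost (the loop there contributes the color pair $(c_0,c_0)$ and the corner is again $\mathbb C[x_0,x_1,x_2]$), but as written your proof excludes it. Two smaller caveats: the justification ``corner wild $\Rightarrow$ algebra wild via the fully faithful adjoint $Be\otimes_{eBe}(-)$'' is informal (fully faithfulness alone is not the criterion for a representation embedding; one should either appeal directly to the fact that tameness passes to $eBe$, or control exactness/freeness of the relevant bimodule), and the wildness of $\mathbb C[x_1,\dots,x_k]/\mathfrak m^3$ is a nontrivial input from the classification of local algebras, whereas the paper's argument only needs to recognize a couple of small explicit wild quivers.
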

\begin{proof} First, we consider the
case $k=2$. For $A''(2)$  we take its quotient by
$\alpha=\gamma=\varepsilon=0$, and
for $A'(2)$ we consider the quotient by
$\gamma=\delta=\varepsilon=\sigma=0$.
In both cases we obtain the path algebra of the quiver

$$
\xymatrix{\bullet  \ar@(ul,ur)& \bullet
\ar[l] }
$$
which is wild.
For $A(2)$ in a similar way one  constructs a quotient algebra
isomorphic to the path algebra of the quiver $
\xymatrix{\bullet \ar@<0.5ex>[r]^{}& \bullet
\ar@<0.5ex>[l]^{} & \ar[l]^{} \bullet}$.

For $k>2$,  $A(k)$, $A'(k)$ and $A''(k)$ always have a quotient algebra isomorphic
to $A'(2)$.  

Finally we notice that the category of nilpotent representations of $A(1)$
is isomorphic to  the category of $\mathbb Z_2$-graded nilpotent
representations of $\mathbb C[\theta]$ with deg$(\theta)=1$. Hence, this category
is tame.
\end{proof}

\subsection{Koszulity}
\begin{proposition}\label{koszul} The algebras $A(k)$ and $B(k)$ for $k\geq 2$ are Koszul.
\end{proposition}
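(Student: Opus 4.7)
The plan is to realize $A(k)$ and $B(k)$ as smash products of Koszul algebras with the finite group $\mathbb{Z}_2^k$, and then invoke the standard preservation of Koszulity under such smash products.

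First I would identify the algebras. Let $R_A := \mathbb{C}[x_1,\ldots,x_k]$ and $R_B := R_A/(x_i^2-x_j^2 : i \neq j)$, both positively graded by $\deg x_i = 1$, and let $\mathbb{Z}_2^k$ act by the sign characters $\sigma \cdot x_i = (-1)^{\sigma_i} x_i$. I claim $A(k) \cong R_A \rtimes \mathbb{Z}_2^k$ and $B(k) \cong R_B \rtimes \mathbb{Z}_2^k$ as graded algebras, matching the path-length grading on the quiver side with the polynomial grading on the smash product. The cleanest way to see this is via module categories: a representation of $A(k)$ is a family $\{M_v\}_{v \in \mathbb{Z}_2^k}$ with linear maps $M_v \to M_{v+e_i}$ satisfying commutativity of every square of the cube, which is exactly the data of a $\mathbb{Z}_2^k$-graded $R_A$-module (with $x_i$ acting by the $i$-th map); and $\mathbb{Z}_2^k$-graded $R$-modules are the same as modules over $R \rtimes \mathbb{Z}_2^k$ for the sign action. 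The extra relations in $B(k)$ identify the length-two loops at each vertex across directions, which translates precisely into the relations $x_i^2 = x_j^2$ in $R_B$. Both algebras in each isomorphism are basic with matching primitive idempotents and Hilbert series, confirming the identifications.

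Next I would verify Koszulity of $R_A$ and $R_B$. The polynomial algebra $R_A$ is Koszul classically, with Koszul dual the exterior algebra $\Lambda(x_1,\ldots,x_k)$. For $R_B$, the elements $x_2^2-x_1^2,\ldots,x_k^2-x_1^2$ form a regular sequence of quadratic forms in $\mathbb{C}[x_1,\ldots,x_k]$, as confirmed by the Hilbert series $H_{R_B}(s) = (1+s)^{k-1}/(1-s)$ matching the expected complete-intersection form. Therefore $R_B$ is a quadratic complete intersection and hence Koszul by Tate's resolution (or Fr\"oberg's theorem). Alternatively, the given presentation admits a quadratic Gr\"obner basis under deg-lex with $x_1 < \cdots < x_k$, whose leading monomials are $\{x_j x_i : i<j\} \cup \{x_i^2 : i>1\}$, and Koszulity follows from Priddy's theorem.

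Finally, for any Koszul algebra $R$ and any finite group $G$ acting by graded automorphisms with $|G|$ invertible in the base field, the smash product $R \rtimes G$ is again Koszul: averaging a linear projective resolution of the trivial $R$-module over $G$ yields a linear projective resolution of each simple $(R \rtimes G)$-module. Specializing to $G = \mathbb{Z}_2^k$ completes the proof. The main technical step is the Koszulity of $R_B$, since it is neither polynomial nor exterior; once this is established, by either the complete-intersection or Gr\"obner basis route, Koszulity of $A(k)$ and $B(k)$ is a formal consequence of standard facts about smash products and finite group actions.
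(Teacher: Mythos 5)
Your proposal is correct and, for $A(k)$, is essentially identical to the paper's argument (the paper phrases it as an equivalence between $\mathbb{Z}$-graded $A(k)$-modules and $G_k\times\mathbb{Z}$-graded $\mathbb{C}[x_1,\ldots,x_k]$-modules, which is exactly the smash-product picture $A(k)\cong \mathbb{C}[x_1,\ldots,x_k]\rtimes\mathbb{Z}_2^k$). For $B(k)$, however, you take a genuinely different route. You work directly with the \emph{commutative} algebra $R_B=\mathbb{C}[x_1,\ldots,x_k]/(x_i^2-x_j^2)$ and observe that it is a quadratic complete intersection (or, equivalently, has a quadratic Gr\"obner basis), hence Koszul. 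The paper instead first renormalizes the arrows of $C(k)$ by a sign $(-1)^{(v,\delta_1+\cdots+\delta_i)}$, which replaces $R_B$ by the \emph{noncommutative} quadratic algebra $D(k)=T(\xi_1,\ldots,\xi_k)/(\xi_i\xi_j+\xi_j\xi_i,\,\xi_i^2-\xi_j^2)$; it then recognizes $D(k)$ as the quadratic dual of the quadric hypersurface $\mathbb{C}[x_1,\ldots,x_k]/(x_1^2+\cdots+x_k^2)$, and inherits Koszulity by duality. The two local algebras $R_B$ and $D(k)$ are not isomorphic (one is commutative, the other isn't), but they have the same Hilbert series and both yield $B(k)$ after the $G_k$-graded gluing, since the sign renormalization of arrows is a category equivalence. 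Your version avoids the sign gymnastics entirely and substitutes the complete-intersection/Gr\"obner-basis criterion for the quadric-dual argument; the paper's version makes the Koszul dual visible explicitly. The final step you invoke (Koszulity is preserved under $R\mapsto R\rtimes G$ for $|G|$ invertible) is the same as the paper's implicit step of equipping the Koszul resolution with the extra $G_k$-grading, just stated as a black box rather than unfolded. One minor point worth tightening: when you write the Gr\"obner basis for $R_B$ you list leading monomials $x_jx_i$ for $i<j$, which is the noncommutative picture; in the commutative picture one only needs the leading terms $x_i^2$ for $i>1$ and the S-pair check $x_1^2(x_i^2-x_j^2)\to 0$, which is immediate. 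Either way the conclusion holds.
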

\begin{proof} Let us enumerate the vertices of the quiver $C_k$ by the elements of the group $G_k\simeq \mathbb Z_2^k$ in the natural way. By $\delta_1,\dots,\delta_k$
we denote the basis of $G_k$ over $\mathbb Z_2$. We also consider the inner product $(\cdot,\cdot)$ on $G_k$ such that $(\delta_i,\delta_j)=\delta_{ij}$.

Define the $G_k$-grading of the polynomial algebra $\mathbb C[x_1,\dots,x_k]$ by setting the degree of $x_i$  to equal $\delta_i$. In addition, we consider the 
standard $\mathbb Z$-grading of $\mathbb C[x_1,\dots,x_k]$. Thus, $\mathbb C[x_1,\dots,x_k]$ is now equipped with a $G_k\times\mathbb Z$-grading.
It is not hard to see that the category of all $\mathbb Z$-graded $A(k)$-modules is equivalent to the category of all $G_k\times\mathbb Z$-graded 
$\mathbb C[x_1,\dots,x_k]$-modules. The Koszul resolution of the trivial  $\mathbb C[x_1,\dots,x_k]$-module can be equipped with a $G_k$-grading. Therefore
$A(k)$ is Koszul. 

To prove the Koszulity of $B(k)$, let us first renormalize the arrows of the quiver
$C(k)$ in the following way. If the arrow $\theta$ joins the vertices $v$ and $v+\delta_i$, where $v\in G_k$,
we multiply $\theta$ by $(-1)^{(v,\delta_1+\dots+\delta_i)}$. 
Let $D(k)$ be the quadratic algebra with generators $\xi_1,\dots,\xi_k$ and
relations $\xi_i\xi_j+\xi_j\xi_i=0$, $\xi_i^2=\xi_j^2$ for all $i\neq j$.  Note that $D(k)$ is the dual to the Koszul algebra
$\mathbb C[x_1,\dots,x_k]/(x_1^2+\dots+x_k^2)$ and, hence, $D(k)$ is Koszul. Define a $G_k$-grading on $D(k)$ by setting the degree of $x_i$ to equal $\delta_i$.
Then the category of all graded $B(k)$-modules is equivalent
to the category of $G_k\times\mathbb Z$-graded $D(k)$-modules. Thus,  the Koszulty of $B(k)$ follows by the argument analogous to the one in the previous case.
\end{proof}

Finally let us note that $B'(k)$  for $k\geq 3$ and $B''(k)$  for $k\geq 4$ are quadratic algebras. However, we do not know if they are Koszul.

\section{Generalized weight modules over the Weyl algebra}

The goal of this section is to study the structure of 
$^{\rm b}(\mathcal D,\mathcal H)_\mu-\rm{mod}$.
Note that since the simple modules in $(\mathcal D, \mathcal H)$-mod are
the same as those in  $^{\rm {wb}}(\mathcal D,\mathcal H)$-mod, we can use the
description of simples from \cite{GS}.
We will show that $(\mathcal D, \mathcal H)$-mod has enough injectives and
explicitely construct an injective cogenerator $\mathcal R_\mu$ in
$(\mathcal D,\mathcal H)_\mu-\rm{mod}$.

We use a slight modification of the classical result of
Gabriel. For the proof of this version see \cite{GS2}.

\begin{theorem}\label{antieq} Let  $\mathcal R_\mu$ be an injective
cogenerator of $(\mathcal D,\mathcal H)_\mu-\rm{mod}$, and let 
$\mathcal A_\mu=\operatorname{End}_\mathcal D(\mathcal R_\mu)$. 
The category $\mathcal A_\mu$-fmod of finite dimensional 
$\mathcal A_\mu$-modules
is antiequivalent to the category $^{\rm b}(\mathcal D,\mathcal H)_{\mu}$-mod.  
The mutually inverse contravariant functors which establish this
antiequivalence are ${\rm Hom}_{\mathcal D}(\cdot,\mathcal R_\nu)$
and  ${\rm Hom}_{\mathcal A_\mu}(\cdot,\mathcal R_\nu)$.
\end{theorem}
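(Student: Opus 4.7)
The plan is to follow the classical Gabriel antiequivalence argument, adapted to the generalized weight setting. The two functors $F := \mathrm{Hom}_{\mathcal{D}}(-, \mathcal{R}_\mu)$ and $G := \mathrm{Hom}_{\mathcal{A}_\mu}(-, \mathcal{R}_\mu)$ are manifestly contravariant and left-exact. Since $\mathcal{R}_\mu$ is an injective cogenerator of $(\mathcal{D}, \mathcal{H})_\mu$-mod, the functor $F$ is also exact and faithful on the whole ambient category, which is the standard engine that drives this type of duality.

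The first main step is well-definedness of both functors on the required subcategories. For $M \in {}^{\mathrm{b}}(\mathcal{D}, \mathcal{H})_\mu$-mod, finite-dimensionality of $F(M)$ as a right $\mathcal{A}_\mu$-module (with action by post-composition) is established by devissage: using that $M$ has bounded and hence finite-dimensional weight spaces, that supp$\,M$ is contained in the single coset $\mu + Q$, and that (by the description of simples in \cite{GS}) the block contains only finitely many simple modules up to isomorphism, one shows $M$ has finite length. Then $\dim F(M) = \sum_{S}[M:S] \cdot \dim \mathrm{Hom}_{\mathcal{D}}(S, \mathcal{R}_\mu)$, and each summand is finite since $\mathcal{R}_\mu$ is a direct sum of the injective envelopes of the simples (each with finite multiplicity, by the construction of $\mathcal{R}_\mu$ announced before the theorem).

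The second step concerns the inverse direction. For a finite dimensional $\mathcal{A}_\mu$-module $N$, the $\mathcal{D}$-module structure on $G(N)$ is transported from the natural $(\mathcal{D}, \mathcal{A}_\mu)$-bimodule structure on $\mathcal{R}_\mu$. The nontrivial point—and the main obstacle of the proof—is to verify that $G(N)$ actually lies in ${}^{\mathrm{b}}(\mathcal{D}, \mathcal{H})_\mu$-mod, i.e.\ that its $\mathcal{H}$-action is locally finite with bounded (generalized) weight multiplicities and support inside a single coset of $Q$. This requires the explicit description of the weight decomposition of $\mathcal{R}_\mu$ obtained in the preceding portion of Section 5: the weight spaces of $\mathcal{R}_\mu$ are finite dimensional over $\mathcal{A}_\mu$ in a suitable sense, so the weight spaces of $G(N) = \mathrm{Hom}_{\mathcal{A}_\mu}(N, \mathcal{R}_\mu)$ embed into $\mathrm{Hom}_{\mathbb{C}}(N, \mathcal{R}_\mu^{(\lambda)})$ and are bounded uniformly in $\lambda$ by $(\dim N)\cdot \max_\lambda \dim \mathcal{R}_\mu^{(\lambda)}$.

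The final step is to produce the natural isomorphisms $\mathrm{id} \cong GF$ and $\mathrm{id} \cong FG$. One has the obvious evaluation morphisms; to show they are isomorphisms I would first check the case of a simple module $S$ (and dually the simple $\mathcal{A}_\mu$-modules, which correspond to the indecomposable summands of $\mathcal{R}_\mu$), where injective cogeneration and the identification $\mathrm{End}_{\mathcal{D}}(I(S)) \cong$ the local component of $\mathcal{A}_\mu$ give the claim directly. Then exactness of $F$, together with the five-lemma applied along a composition series of $M$ (respectively, $N$), extends the isomorphism to all objects of the relevant subcategory, completing the antiequivalence.
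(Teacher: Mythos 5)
The paper itself offers no proof of Theorem~\ref{antieq}: it remarks that this is ``a slight modification of the classical result of Gabriel'' and defers the proof to \cite{GS2}. So your argument can only be assessed on its own merits. Your overall strategy---set up $F = \operatorname{Hom}_{\mathcal D}(\cdot,\mathcal R_\mu)$ and $G = \operatorname{Hom}_{\mathcal A_\mu}(\cdot,\mathcal R_\mu)$, check that each functor lands in the right subcategory, then verify the evaluation maps are isomorphisms on simples and propagate by exactness---is the standard route to Gabriel duality, and the first and last steps are sound.

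There is, however, a concrete error in the step where you verify that $G(N)$ lies in $^{\rm b}(\mathcal D,\mathcal H)_\mu$-mod. You bound $\dim G(N)^{(\lambda)}$ by $(\dim N)\cdot\max_\lambda\dim\mathcal R_\mu^{(\lambda)}$, but this quantity is infinite: the injective cogenerator has infinite-dimensional generalized weight spaces. Already for $\mathcal D(1)$ and $\nu\notin\mathbb Z$ one has $\mathcal R_\nu=\mathcal F_\nu^{\log}=\mathcal F_\nu\otimes\mathbb C[u]$, so that $\mathcal R_\nu^{(\nu+k)}\simeq\mathbb C[u]$ for every $k\in\mathbb Z$. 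Your hedge ``finite dimensional over $\mathcal A_\mu$ in a suitable sense'' signals the difficulty but does not resolve it. What actually rescues the step is the pseudocompactness of $\mathcal A_\mu$: the paper exhibits $\mathcal A_\mu=\lim_{\longleftarrow}\mathcal A_\mu^{\mathbf m}$ with each $\mathcal A_\mu^{\mathbf m}$ finite-dimensional, and the generators $z_i,\theta_j$ act topologically nilpotently. Consequently any finite-dimensional $\mathcal A_\mu$-module $N$ is annihilated by a cofinite two-sided ideal $I$, so
$$\operatorname{Hom}_{\mathcal A_\mu}(N,\mathcal R_\mu)=\operatorname{Hom}_{\mathcal A_\mu/I}\bigl(N,\mathcal R_\mu[I]\bigr),$$
where $\mathcal R_\mu[I]$ denotes the $I$-torsion part of $\mathcal R_\mu$. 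Since $\mathcal D$ and $\mathcal A_\mu$ commute on $\mathcal R_\mu$, this is a $\mathcal D$-submodule; it sits inside one of the finite-length pieces $F_{\mathbf m}(\mathcal R_\mu)$ of the filtration and hence has bounded weight multiplicities. Replacing $\mathcal R_\mu^{(\lambda)}$ by $\mathcal R_\mu[I]^{(\lambda)}$ in your estimate gives a genuine finite bound, and with this correction the rest of your argument goes through.
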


\subsection{ The case of $\mathcal D(1)$} In this subsection we assume
$\mathcal D=\mathcal D(1)$, $t_0=t$ and $\partial_0=\partial$. 
For $\nu\in\mathbb C$ we set 
$$\mathcal F_\nu=t^\nu\mathbb C[t,t^{-1}]$$
and consider $\mathcal F_\nu$ as a $\mathcal D$-module with the natural action of
$\mathcal D$.  It is easy to check that $\mathcal F_\nu\in{}^{\rm b}(\mathcal D,\mathcal H)_\mu-\rm{mod}$
and  $\mathcal F_\nu$ is simple iff $\nu\notin\mathbb Z$. 
By definition, $\mathcal F_\nu\simeq \mathcal F_\mu$ if and only if
$\mu-\nu\in\mathbb Z$. So, if
$\nu\in\mathbb Z$ we may assume $\nu=0$. It is an easy exercise to
check that $\mathcal F_0$ has length $2$ and one has the following
non-split exact sequence
$$0\to \mathcal F^{+}_0\to\mathcal F_0\to\mathcal F^{-}_0\to 0,$$
where $ \mathcal F^{+}_0=\mathbb C[t]$ and $\mathcal F^{-}_0$ is a
simple quotient. Moreover, if $\sigma$ denotes the automorphism of
$\mathcal D$ defined by $\sigma(t)=\partial, \sigma(\partial)=-t$, then
$\mathcal F^-_0\simeq (\mathcal F^{+}_0)^\sigma$.

As follows for instance from \cite{GS}, any simple object in $(\mathcal D,\mathcal H)$-mod
is isomorphic to $\mathcal F_\nu$ for some non-integer $\nu$,
$\mathcal F^-_0$ or $\mathcal F^+_0$. In this subsection, we will verify that  $(\mathcal D,\mathcal H)$-mod has enough injectives.

Set $u=\log t$ and consider the $\mathcal D$-module 
$$\mathcal F^{\rm log}_\nu=\mathcal F_{\nu}\otimes\mathbb C[u].$$
One can easily check that $\mathcal F^{\rm log}_\nu\in (\mathcal D,\mathcal H)_\nu$-mod
and that  $\mathcal F^{\rm log}_\nu$ has a unique simple submodule
isomorphic to $\mathcal F_\nu$ for $\nu\notin\mathbb Z$ and to $\mathcal F_0^+$ for $\nu\in\mathbb Z$.

\begin{proposition}\label{injective1}  The module ${\mathcal F}^{\log}_\nu$ is an
injective module in the category  $(\mathcal D,\mathcal H)$-mod.
\end{proposition}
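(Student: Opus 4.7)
The plan is to prove the natural isomorphism
$$\operatorname{Hom}_{\mathcal{D}}(M, \mathcal{F}^{\log}_\nu) \;\cong\; \operatorname{Hom}_{\mathcal{H}}\bigl(M^{(\nu)},\, \mathcal{H}^\vee_\nu\bigr) \qquad (\ast)$$
natural in $M \in (\mathcal{D}, \mathcal{H})$-mod, where $\mathcal{H}^\vee_\nu := (\mathcal{F}^{\log}_\nu)^{(\nu)} = t^\nu\mathbb{C}[u]$, on which $h = t\partial$ acts as $\nu + \partial_u$ by direct computation from $\partial u = t^{-1}$. The forward map in $(\ast)$ is restriction to the $\nu$-generalized weight space.

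From $(\ast)$ the injectivity of $\mathcal{F}^{\log}_\nu$ is formal: the functor $M \mapsto M^{(\nu)}$ is exact on $(\mathcal{D}, \mathcal{H})$-mod, being projection onto a direct summand of the generalized weight decomposition $M = \bigoplus_\lambda M^{(\lambda)}$; and $\mathcal{H}^\vee_\nu$ is the injective hull of the simple weight-$\nu$ $\mathcal{H}$-module in the category of $\mathcal{H}$-modules on which $h - \nu$ acts locally nilpotently (standard Matlis duality, as $\mathbb{C}\cdot 1$ is the essential socle under $\partial_u$). Hence $\operatorname{Hom}_{\mathcal{H}}(-, \mathcal{H}^\vee_\nu)$ is exact, and by $(\ast)$ so is $\operatorname{Hom}_{\mathcal{D}}(-, \mathcal{F}^{\log}_\nu)$.

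To construct the inverse of the restriction map in $(\ast)$, given an $\mathcal{H}$-linear map $g \colon M^{(\nu)} \to \mathcal{H}^\vee_\nu$, I would define $\tilde g$ on each generalized weight space $M^{(\nu+k)}$ by the compatibility condition $d \cdot \tilde g(m) = g(d\, m)$ for $d \in \mathcal{D}^{-k}$. The explicit action formulas $\partial(t^a u^j) = t^{a-1}(a u^j + j u^{j-1})$ and $t(t^a u^j) = t^{a+1} u^j$ on $\mathcal{F}^{\log}_\nu$ show first that $\mathcal{F}^{\log}_\nu$ is generated as a $\mathcal{D}$-module by $(\mathcal{F}^{\log}_\nu)^{(\nu)}$, and second that for $k \leq 0$ the map $t^{-k}$ is injective on weight spaces and directly pins down $\tilde g(m)$. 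The $\mathcal{D}$-linearity of $\tilde g$ then reduces to the commutation $[\partial, t] = 1$.

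The hard part will be the case $\nu \in \mathbb{Z}$ with $k > 0$: the descent $\partial^k \colon (\mathcal{F}^{\log}_\nu)^{(\nu+k)} \to (\mathcal{F}^{\log}_\nu)^{(\nu)}$ can acquire a one-dimensional kernel when its iteration passes through weight $0$, so $\tilde g(m)$ is not pinned down by $\partial^k \tilde g(m) = g(\partial^k m)$ alone. This can be handled by combining the constraints $\partial^{k+j}\tilde g(m) = \tilde g(\partial^{k+j} m)$ for varying $j \geq 0$, together with the fact that solutions must be polynomials in $\mathbb{C}[u]$ to enforce uniqueness; alternatively, one reduces to $\nu = 0$ via the isomorphism $\mathcal{F}^{\log}_\nu \cong \mathcal{F}^{\log}_0$ for $\nu \in \mathbb{Z}$ and verifies well-definedness by direct computation.
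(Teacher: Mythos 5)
Your key identity $(\ast)$, $\operatorname{Hom}_{\mathcal{D}}(M,\mathcal{F}^{\log}_\nu)\cong\operatorname{Hom}_{\mathcal{H}}(M^{(\nu)},\mathcal{H}^\vee_\nu)$, is exactly the paper's Frobenius reciprocity identity, and your deduction of injectivity from it (exactness of $M\mapsto M^{(\nu)}$ plus injectivity of $\mathcal{H}^\vee_\nu$ in locally nilpotent $\mathcal{H}$-modules) is the same deduction the paper makes. The difference is only in how $(\ast)$ is established: the paper realizes the right-hand side abstractly via the coinduction functor $\operatorname{Hom}_{\mathcal{H}}(\mathcal{D},-)$ followed by the locally-finite-vectors functor $\Gamma_{\mathcal{H}}$, a right adjoint to the embedding $(\mathcal{D},\mathcal{H})\text{-mod}\to\mathcal{D}\text{-mod}$; this yields $(\ast)$ for the coinduced module $I_\nu$ with no computation, and the work is then deferred to the final identification $\mathcal{F}^{\log}_\nu\cong I_\nu$, which is checked weight space by weight space. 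Your plan to build the inverse of the restriction map directly is workable but puts the entire burden on a hand verification of well-definedness and $\mathcal{D}$-linearity of $\tilde g$, which you do not carry out; in particular the $\mathcal{D}$-linearity check, while it does ultimately reduce to $[\partial,t]=1$ and the $\mathcal{H}$-linearity of $g$, is not a one-line consequence and needs to be written.

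One of your two proposed fixes for the case $\nu\in\mathbb{Z}$, $k>0$ is wrong. Adding the constraints $\partial^{k+j}\tilde g(m)=\tilde g(\partial^{k+j}m)$ for $j\ge 0$ does not restore uniqueness: if $v\in(\mathcal{F}^{\log}_\nu)^{(\nu+k)}$ lies in $\ker\partial^{k}$, then automatically $\partial^{k+j}v=0$ for all $j\ge 0$, and $v$ is already a polynomial in $u$, so $\tilde g(m)$ and $\tilde g(m)+v$ satisfy the same system. Your second fix is the right one and is in fact what the paper does at the outset (``if $\nu\in\mathbb Z$ we assume $\nu=0$''): after normalizing $\nu=0$, each step of $\partial^k:t^k\mathbb{C}[u]\to t^0\mathbb{C}[u]$ has the form $\partial:t^m\mathbb{C}[u]\to t^{m-1}\mathbb{C}[u]$ with $m\ge 1$, where $\partial$ acts as $m+\partial_u$ on the $u$-coefficients and is hence bijective, so $\partial^k$ is bijective from weight $k$ down to weight $0$ and the constraint does pin $\tilde g(m)$ down uniquely. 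So the approach can be made to work, but only with the normalization in place and with the $\mathcal{D}$-linearity verification actually carried out; as written, the proposal is a correct outline with an incorrect fallback argument and an unverified core step.
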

\begin{proof} If $\nu\in\mathbb Z$ we assume $\nu=0$. Let $N={\mathcal F}^{\log}_\nu$. For any  $\mu\in\operatorname{supp}N$
the generalized weight subspace $N^{(\mu)}$ is isomorphic to the
direct limit 
$$\lim_{\longrightarrow}\mathcal H/(t\partial-\mu)^m.$$
Hence $N^{(\mu)}$ is injective in the category of locally finite
$\mathcal H$-modules.

For any $\mathcal D$-module $M$ let $\Gamma_{\mathcal H}(M)$ be the
set of all finite $\mathcal H$-vectors. Note that $\Gamma_{\mathcal H}(M)$ is 
in fact $\mathcal D$-invariant and therefore 
$$\Gamma_{\mathcal H} : \mathcal D-{\rm mod}\to (\mathcal D,\mathcal H)-{\rm mod}$$
is a functor right adjoint to the embedding functor $ (\mathcal D,\mathcal H)-{\rm mod}\to\mathcal D-{\rm mod}$.

We claim now that 
$$I_\nu=\Gamma_{\mathcal H}({\rm Hom}_\mathcal H(\mathcal D,N^{(\nu)}))$$
is injective in $(\mathcal D,\mathcal H)$-mod. Indeed, for any $M\in (\mathcal D,\mathcal H)$-mod
$$\operatorname{Hom}_{\mathcal D}(M,I_\nu)= \operatorname{Hom}_{\mathcal D}(M,{\rm Hom}_\mathcal H(\mathcal D,N^{(\nu)}))=
 \operatorname{Hom}_{\mathcal H}(M,N^{(\nu)}),$$
where the second equality follows from the Frobenius reciprocity.
Thus, ${\rm Hom}_\mathcal D(\cdot, I_\nu)\simeq  \operatorname{Hom}_{\mathcal H}(\cdot,N^{(\nu)})$ is an exact functor.

It remains to prove  that $N$ is isomorphic
to $I_\nu$. Consider the homomorphism $\varphi: N\to I_\nu$ induced by
the projection $N\to N^{(\nu)}$ via Frobenius reciprocity.  
Then $\varphi$ is injective since it is not zero on the unique simple
submodule of $N$. To prove the surjectivity of $\varphi$ note that any
generalized weight subspace $I_\nu^{(\mu)}$ is isomorphic to $N^{(\mu)}$
and any non-zero $\mathcal H$-linear map  $I_\nu^{(\mu)}\to N^{(\mu)}$ is surjective.
\end{proof}

It is clear from above that we have constructed injective hulls of all up to isomorphism simple
objects in $(\mathcal D,\mathcal H)$-mod except of  $\mathcal F_0^-$. To
construct an indecomposable injective hull of $\mathcal F_0^-$ we 
use the twist by $\sigma$. Indeed, $(\mathcal F_0^{\rm log})^\sigma$ is
injective with unique simple submodule 
$(\mathcal F_0^+)^\sigma\simeq \mathcal F_0^-$.

Thus, we have constructed an injective cogenerator $\mathcal R_\nu$ for every block 
$(\mathcal D,\mathcal H)_\nu-\rm{mod}$. Namely, if $\nu\notin\mathbb Z$ we
have   $\mathcal R_\nu=\mathcal F^{\rm log}_\nu$, and
if $\nu=0$ we have $\mathcal R_0= \mathcal F^{\rm log}_0\oplus(\mathcal F^{\rm log}_0)^\sigma.$

\begin{lemma}\label{end1} $\operatorname{End}_\mathcal D(\mathcal F^{\rm log}_\nu)\simeq \mathbb C[[z]]$.
\end{lemma}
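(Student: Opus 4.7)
The plan is to reduce the endomorphism computation to a commutative-algebra calculation via the adjunction established in the proof of Proposition \ref{injective1}. Since $\mathcal F^{\log}_\nu\cong I_\nu$, that proof gives a natural isomorphism $\operatorname{Hom}_\mathcal D(M,\mathcal F^{\log}_\nu)\cong \operatorname{Hom}_\mathcal H(M,N^{(\nu)})$ for every $M$ in $(\mathcal D,\mathcal H)\text{-mod}$. Applying this with $M=\mathcal F^{\log}_\nu=\bigoplus_\mu N^{(\mu)}$ and observing that any $\mathcal H$-linear map into $N^{(\nu)}$ must vanish on $N^{(\mu)}$ for $\mu\ne\nu$ (the two generalized weight spaces are killed by coprime powers of $t\partial$-polynomials), I obtain
$$
\operatorname{End}_\mathcal D(\mathcal F^{\log}_\nu)\;\cong\;\operatorname{End}_\mathcal H(N^{(\nu)}).
$$

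Next I would identify $N^{(\nu)}$ explicitly as an $\mathcal H$-module. The direct computation $(t\partial)(t^\nu u^m)=\nu\,t^\nu u^m+m\,t^\nu u^{m-1}$ shows that, setting $z:=t\partial-\nu$, the space $N^{(\nu)}=t^\nu\mathbb C[u]$ becomes identified with $\mathbb C[u]$ as a $\mathbb C[z]$-module, with $z$ acting as $d/du$. Thus it suffices to prove
$$
\operatorname{End}_{\mathbb C[z]}\bigl(\mathbb C[u]\bigr)\;\cong\;\mathbb C[[z]],
$$
with $z$ acting by $d/du$.

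For the final step, a power series $f(z)=\sum_{k\ge 0}a_k z^k\in\mathbb C[[z]]$ gives the operator $\sum_k a_k(d/du)^k$, which is a finite sum on every polynomial; this yields an injective ring homomorphism $\mathbb C[[z]]\to\operatorname{End}_{\mathbb C[z]}(\mathbb C[u])$. Conversely, any $\phi$ commuting with $d/du$ is uniquely determined by the constant terms $b_m$ of $\phi(u^m)$: the relation $z\phi(u^m)=m\phi(u^{m-1})$ forces $\phi=\sum_m(b_m/m!)\,z^m$ when applied term by term to the basis $\{u^n\}$. Equivalently, this is the standard Matlis-duality statement that $\mathbb C[u]$ is the injective hull of the residue field of the Noetherian local ring $\mathbb C[z]_{(z)}$, whose endomorphism ring is the completion $\mathbb C[[z]]$.

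There is no real obstacle: once the adjunction of Proposition \ref{injective1} is in hand, everything reduces to the injective-hull computation above, and the only item requiring a touch of care is checking that $\mathcal H$-homs from $\mathcal F^{\log}_\nu$ into a single generalized weight space vanish on all other weight spaces.
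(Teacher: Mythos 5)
Your proof is correct and takes a genuinely different route from the paper's. The paper argues directly on $\operatorname{End}_\mathcal D(\mathcal F^{\log}_\nu)$: it introduces the exhausting filtration $F_k = F_{k-1}+uF_{k-1}$, notes that every $\mathcal D$-endomorphism preserves it, computes $\operatorname{End}_\mathcal D(F_k)\simeq\mathbb C[z]/(z^{k+1})$ using $\operatorname{Hom}_\mathcal D(F_k/F_{k-1},F_1)\simeq\mathbb C$, and passes to the inverse limit. You instead feed $M=\mathcal F^{\log}_\nu\simeq I_\nu$ into the adjunction established in the proof of Proposition \ref{injective1}, use orthogonality of generalized weight spaces to collapse $\operatorname{Hom}_\mathcal H(\mathcal F^{\log}_\nu,N^{(\nu)})$ to $\operatorname{End}_\mathcal H(N^{(\nu)})$, and identify $N^{(\nu)}\simeq\mathbb C[u]$ as a $\mathbb C[z]$-module with $z=t\partial-\nu$ acting as $d/du$, so the answer drops out of the Matlis-duality computation $\operatorname{End}(E(\mathbb C))\simeq\mathbb C[[z]]$ for the local ring $\mathbb C[z]_{(z)}$. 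Both arguments are sound; yours reuses already-built machinery and makes the source of the completion transparent, while the paper's is more self-contained and elementary. One small point worth making explicit in your write-up: your chain of isomorphisms, when unwound, is precisely the restriction map $f\mapsto f|_{N^{(\nu)}}$ (a $\mathcal D$-endomorphism preserves each generalized weight space, and the adjunction counit is projection onto $N^{(\nu)}$); that is what guarantees $\operatorname{End}_\mathcal D(\mathcal F^{\log}_\nu)\simeq\operatorname{End}_\mathcal H(N^{(\nu)})$ is an isomorphism of \emph{rings}, not merely of abelian groups, a fact you rely on but leave implicit.
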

\begin{proof} First, we consider the embedding $j: C[[z]]\to \operatorname{End}_\mathcal D(\mathcal F^{\rm log}_\nu)$ defined by $z \mapsto \frac{\partial}{\partial u}$. It remains to prove  that $j$ is 
surjective. Note that $\mathcal F^{\rm log}_\nu$ is equipped
with an increasing exhausting filtration $$0=F_0\subset F_1\subset F_2\subset\dots,$$
with $F_1=\mathcal F_\nu$ and $F_k=F_{k-1}+uF_{k-1}$. We have 
\begin{equation}\label{eqend1}
\operatorname{Hom}_{\mathcal D}(F_k/F_{k-1},F_1)\simeq \mathbb C.
\end{equation}
Every $f\in \operatorname{End}_\mathcal D(\mathcal F^{\rm log}_\nu)$
preserves the filtration and hence we have
$$\operatorname{End}_\mathcal D(\mathcal F^{\rm log}_\nu)=\lim_{\longleftarrow}\operatorname{End}_\mathcal D(F_k).$$ 
It follows easily from (\ref{eqend1}) that $\operatorname{End}_\mathcal D(F_k)\simeq \mathbb C[z]/(z^{k+1})$. This completes the proof.
\end{proof}

\begin{corollary}\label{end2}  $\operatorname{End}_\mathcal D((\mathcal F^{\rm log}_0)^\sigma)\simeq \mathbb C[[z]]$.
\end{corollary}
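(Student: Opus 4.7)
The proof should be essentially a one-line observation. The key point is that $\sigma$ is an automorphism of $\mathcal D$ (as defined earlier in the subsection via $\sigma(t)=\partial,\ \sigma(\partial)=-t$), and for any algebra automorphism $\sigma$ of $\mathcal D$, the twist $M \mapsto M^\sigma$ is an autoequivalence of the category of $\mathcal D$-modules. Autoequivalences preserve endomorphism rings, so $\operatorname{End}_\mathcal D(M^\sigma) \simeq \operatorname{End}_\mathcal D(M)$ for any $\mathcal D$-module $M$.

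Applying this observation with $M = \mathcal F_0^{\rm log}$ and combining with Lemma \ref{end1} immediately yields
$$\operatorname{End}_\mathcal D((\mathcal F_0^{\rm log})^\sigma) \simeq \operatorname{End}_\mathcal D(\mathcal F_0^{\rm log}) \simeq \mathbb C[[z]].$$
More concretely, if one prefers an explicit map, the same embedding used in Lemma \ref{end1} works: the element $\frac{\partial}{\partial u}$, viewed as an endomorphism of $\mathcal F_0^{\rm log}$, commutes with the $\sigma$-twisted action and therefore defines an endomorphism of $(\mathcal F_0^{\rm log})^\sigma$, giving the map $j:\mathbb C[[z]] \to \operatorname{End}_\mathcal D((\mathcal F_0^{\rm log})^\sigma)$. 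The filtration $F_k$ from the previous proof transports under $\sigma$ to an exhausting filtration of $(\mathcal F_0^{\rm log})^\sigma$ with isomorphic subquotients (since $\sigma$ does not change underlying vector spaces), and the same computation $\operatorname{Hom}_\mathcal D(F_k^\sigma/F_{k-1}^\sigma, F_1^\sigma) \simeq \mathbb C$ then yields surjectivity of $j$.

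There is no real obstacle here; the only thing to be careful about is confirming that the twisting convention one uses makes $M \mapsto M^\sigma$ into a functor (which is automatic since $\sigma$ is an algebra automorphism, not merely an endomorphism). Given that, the corollary is formal.
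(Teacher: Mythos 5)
Your argument is correct and is exactly the reason the paper states this as an immediate corollary of Lemma \ref{end1} without further proof: twisting by the algebra automorphism $\sigma$ is an autoequivalence of $\mathcal D$-mod, hence preserves endomorphism rings. Nothing more is needed.
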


\begin{lemma}\label{end2} There exists
$\theta\in \operatorname{End}_\mathcal D(\mathcal R_0)$ such that 
$\theta(\mathcal F_0^{\rm log})= (\mathcal F_0^{\rm log})^\sigma$,
$\theta((\mathcal F_0^{\rm log})^\sigma)=\mathcal F_0^{\rm log}$ 
and $\theta^2=z$.
\end{lemma}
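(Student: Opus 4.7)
The plan is to construct $\theta$ as the off-diagonal block matrix $\theta = \begin{pmatrix} 0 & \beta \\ \alpha & 0 \end{pmatrix}$ on $\mathcal R_0 = \mathcal F_0^{\rm log} \oplus (\mathcal F_0^{\rm log})^\sigma$, where $\alpha\colon \mathcal F_0^{\rm log} \to (\mathcal F_0^{\rm log})^\sigma$ and $\beta\colon (\mathcal F_0^{\rm log})^\sigma \to \mathcal F_0^{\rm log}$ are $\mathcal D$-linear maps chosen so that $\beta\alpha = z$ and $\alpha\beta = z$. Granting such $\alpha,\beta$, block multiplication gives $\theta^2 = z\cdot\mathrm{Id}$, and surjectivity of $\alpha,\beta$ (which follows from the explicit formulas below, together with $\mathcal D$-linearity) forces the interchange conditions $\theta(\mathcal F_0^{\rm log}) = (\mathcal F_0^{\rm log})^\sigma$ and $\theta((\mathcal F_0^{\rm log})^\sigma) = \mathcal F_0^{\rm log}$. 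The entire proof thus reduces to producing $\alpha$ and $\beta$ with the correct composition.

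To produce $\alpha$, I would apply the Frobenius reciprocity from the proof of Proposition~\ref{injective1}. Since $(\mathcal F_0^{\rm log})^\sigma$ is the injective hull of $\mathcal F_0^-$, whose cyclic generator carries weight $-1$ in the twisted $\mathcal H$-action on $(\mathcal F_0^{\rm log})^\sigma$, one obtains
\[
\operatorname{Hom}_{\mathcal D}\bigl(\mathcal F_0^{\rm log}, (\mathcal F_0^{\rm log})^\sigma\bigr) \;\cong\; \operatorname{Hom}_{\mathcal H}\bigl((\mathcal F_0^{\rm log})^{(-1)}, ((\mathcal F_0^{\rm log})^\sigma)^{(-1)}\bigr).
\]
Both generalized weight $(-1)$-subspaces on the right are isomorphic to $\mathbb C[u]$ as injective hulls of the trivial module in the category of locally $\partial_u$-nilpotent $\mathbb C[\partial_u]$-modules (the nilpotent operator acting as $\partial_u$ on the source $(\mathcal F_0^{\rm log})^{(-1)} = t^{-1}\mathbb C[u]$ and as $-\partial_u$ on the target $((\mathcal F_0^{\rm log})^\sigma)^{(-1)} = \mathbb C[u]$); hence the Hom space is free of rank one over $\mathbb C[[z]]$, and the analogous statement holds for the reverse Hom. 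Explicit $\mathcal H$-linear generators come from $\alpha(t^{-1}u^k) := (-1)^k u^k$ and $\beta(t^{-1}u^k) := (-1)^k u^k$ (each landing in the appropriate weight subspace of the target), which extend uniquely to $\mathcal D$-linear maps.

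To evaluate $\beta\alpha$, I propagate $\alpha$ from the weight $(-1)$-subspace to the weight $0$-subspace using that $t$ acts on $(\mathcal F_0^{\rm log})^\sigma$ as $-\partial$ in original coordinates:
\[
\alpha(u^k) \;=\; \alpha(t\cdot t^{-1}u^k) \;=\; -\partial\bigl((-1)^k u^k\bigr) \;=\; (-1)^{k+1}k\, t^{-1}u^{k-1},
\]
so that
\[
\beta\alpha(u^k) \;=\; (-1)^{k+1}k\,\beta(t^{-1}u^{k-1}) \;=\; (-1)^{k+1}k\,(-1)^{k-1}u^{k-1} \;=\; k\,u^{k-1} \;=\; \partial_u(u^k).
\]
Since any endomorphism of $\mathcal F_0^{\rm log}$ is determined by its restriction to the weight $0$-subspace $\mathbb C[u]$, on which $z$ acts as $\partial_u$ by Lemma~\ref{end1}, we conclude $\beta\alpha = z$; a parallel computation on the twisted weight $0$-subspace $t^{-1}\mathbb C[u]$ of $(\mathcal F_0^{\rm log})^\sigma$, where $z$ acts as $-\partial_u$, yields $\alpha\beta = z$. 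The main subtle point is the exact normalization of the generators of the two Hom spaces: the sign choice $(-1)^k$ in the definitions of $\alpha$ and $\beta$ is precisely what makes the intermediate signs in the composition $\beta\alpha(u^k)$ cancel to produce $\partial_u$ rather than the identity (the latter would make $\alpha$ an isomorphism, impossible since the socles of $\mathcal F_0^{\rm log}$ and $(\mathcal F_0^{\rm log})^\sigma$ are non-isomorphic simple modules).
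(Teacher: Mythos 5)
Your construction is correct and uses the same structural idea as the paper — write $\theta$ as the sum of two off-diagonal maps $\alpha\colon\mathcal F_0^{\rm log}\to(\mathcal F_0^{\rm log})^\sigma$ and $\beta\colon(\mathcal F_0^{\rm log})^\sigma\to\mathcal F_0^{\rm log}$ — but the way you pin down $\theta^2 = z$ differs from the paper's argument in an instructive way. The paper produces $\theta^+,\theta^-$ abstractly (extending the quotient map $\mathcal F_0\twoheadrightarrow\mathcal F_0^-$ using injectivity of the target), then observes that the kernels of $\theta^-\theta^+$ and $\theta^+\theta^-$ are $\mathcal F_0$ and $\mathcal F_0^\sigma$, which forces $\theta^-\theta^+ = zf$ for a unit $f\in\mathbb C[[z]]$, and finally normalizes $f=1$. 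You instead build $\alpha,\beta$ explicitly via Frobenius reciprocity, verify $\beta\alpha=\partial_u=z$ by a direct weight-space calculation, and absorb the normalization into the sign convention for the generator of $\operatorname{End}_{\mathcal D}((\mathcal F_0^{\rm log})^\sigma)$. Your computation is right, modulo three small points you should tighten: (1) the Frobenius-reciprocity identification $\operatorname{Hom}_{\mathcal D}(\mathcal F_0^{\rm log},(\mathcal F_0^{\rm log})^\sigma)\cong\operatorname{Hom}_{\mathcal H}((\cdot)^{(-1)},(\cdot)^{(-1)})$ is not literally Proposition~\ref{injective1} (which treats $\mathcal F_\nu^{\log}$, not its $\sigma$-twist); you need the twisted analogue $(\mathcal F_0^{\rm log})^\sigma\cong I_{-1}$, which does follow by twisting but should be said; (2) the sign $(-1)^k$ is not a normalization \emph{choice} — it is forced by $\mathcal H$-linearity (equivariance between $\partial_u$ on the source and $-\partial_u$ on the target), so the last sentence's framing (``the sign choice ... is precisely what makes ... $\partial_u$ rather than the identity'') is misleading: the map $t^{-1}u^k\mapsto u^k$ without the sign fails to be $\mathcal H$-linear at all; and (3) surjectivity of $\alpha$ and $\beta$, needed for the interchange conditions, is asserted but not proved; it follows because the target is generated as a $\mathcal D$-module by the weight space on which you showed $\alpha$ (resp.\ $\beta$) is bijective — worth one line to close the argument.
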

\begin{proof} Recall that $\mathcal F_0/\mathcal F_0^+\simeq \mathcal F_0^-$, and this isomorphism extends to 
homomorphism $\theta^+:\mathcal F_0^{\rm log}\to (\mathcal F_0^{\rm log})^\sigma$ with kernel 
$\mathcal F_0^+$. By twisting with $\sigma$ we construct 
$\theta^-:(\mathcal F_0^{\rm log})^\sigma\to \mathcal F_0^{\rm log}$  with kernel 
$\mathcal F_0^-$. By construction
$$\operatorname{Ker}(\theta^-\circ\theta^+)=\mathcal F_0$$
and
$$\operatorname{Ker}(\theta^+\circ\theta^-)=\mathcal F_0^{\sigma}.$$

That implies $\theta^-\circ\theta^+=z f$ for some unit $f\in\mathbb C[z]$. 
By normalizing we can assume that   $f=1$. 
Finally, set $\theta=\theta^++\theta^-$.
\end{proof}

\begin{corollary}\label{end3} The algebra $\operatorname{End}_\mathcal D(\mathcal R_0)$
is generated by $\mathbb C[[\theta]]$ and the idempotents $e^+$ and
$e^-$ representing projectors on $\mathcal F_0^{\rm log}$ and  $(\mathcal F_0^{\rm log})^\sigma$, respectively. 
\end{corollary}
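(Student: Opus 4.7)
My plan is to analyze $\operatorname{End}_\mathcal D(\mathcal R_0)$ via the Peirce decomposition induced by the orthogonal idempotents $e^+, e^-$ (with $e^++e^-=1$), namely
$$\operatorname{End}_\mathcal D(\mathcal R_0)=\bigoplus_{\varepsilon,\varepsilon'\in\{+,-\}}e^{\varepsilon'}\operatorname{End}_\mathcal D(\mathcal R_0)e^\varepsilon,$$
and to identify each of the four summands with a rank one free $\mathbb C[[z]]$-module generated by a suitable power (zeroth or first) of $\theta$. The diagonal pieces $e^+\operatorname{End}_\mathcal D(\mathcal R_0)e^+\cong\operatorname{End}_\mathcal D(\mathcal F_0^{\log})$ and $e^-\operatorname{End}_\mathcal D(\mathcal R_0)e^-\cong\operatorname{End}_\mathcal D((\mathcal F_0^{\log})^\sigma)$ are already identified with $\mathbb C[[z]]$ by Lemma \ref{end1} and Corollary \ref{end2}; moreover, by Lemma \ref{end2} the generator $z$ of these algebras agrees (after the chosen normalization) with $\theta^2$ restricted to the corresponding summand.

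For the off-diagonal pieces I would repeat the inverse-limit strategy of Lemma \ref{end1}. Equip $\mathcal F_0^{\log}$ with the exhausting filtration $F_k=\mathcal F_0+u\mathcal F_0+\dots+u^{k-1}\mathcal F_0$ and equip $(\mathcal F_0^{\log})^\sigma$ with its $\sigma$-twisted analogue $F_k^\sigma$. Any $\mathcal D$-homomorphism $\varphi\colon\mathcal F_0^{\log}\to(\mathcal F_0^{\log})^\sigma$ preserves these filtrations, so
$$\operatorname{Hom}_\mathcal D(\mathcal F_0^{\log},(\mathcal F_0^{\log})^\sigma)=\lim_{\longleftarrow}\operatorname{Hom}_\mathcal D(F_k,(\mathcal F_0^{\log})^\sigma).$$
Using that $F_k/F_{k-1}\cong \mathcal F_0$ and that $\operatorname{Hom}_\mathcal D(\mathcal F_0,\mathcal F_0^\sigma)$ is one-dimensional (spanned by the composition $\mathcal F_0\twoheadrightarrow\mathcal F_0^-\hookrightarrow\mathcal F_0^\sigma$, which is precisely the restriction of $\theta^+$), an induction on $k$ via the long exact sequence obtained from $0\to F_{k-1}\to F_k\to F_k/F_{k-1}\to 0$ shows $\operatorname{Hom}_\mathcal D(F_k,(\mathcal F_0^{\log})^\sigma)\cong\mathbb C[z]/(z^k)\cdot\theta^+$. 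Passing to the limit gives $\operatorname{Hom}_\mathcal D(\mathcal F_0^{\log},(\mathcal F_0^{\log})^\sigma)=\mathbb C[[z]]\theta^+$; the symmetric argument yields $\operatorname{Hom}_\mathcal D((\mathcal F_0^{\log})^\sigma,\mathcal F_0^{\log})=\mathbb C[[z]]\theta^-$.

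Finally, since $z=\theta^2$, each block decomposes as $e^{\varepsilon'}\theta^ne^\varepsilon$ for an appropriate parity of $n$, so any element of $\operatorname{End}_\mathcal D(\mathcal R_0)$ is a convergent sum of such monomials and hence lies in the subalgebra generated by $\mathbb C[[\theta]]$ together with $e^+$ and $e^-$. The main technical point I expect to have to be careful about is the induction step computing the off-diagonal Hom: one has to check that the obstruction class in $\operatorname{Ext}^1_\mathcal D(\mathcal F_0,(\mathcal F_0^{\log})^\sigma)$ vanishes on the relevant elements, or equivalently that the inductive extension of a partial homomorphism exists and is unique up to the expected $\mathbb C[[z]]$-ambiguity. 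Once that is in place, the statement follows immediately from the Peirce decomposition together with the identifications of all four blocks.
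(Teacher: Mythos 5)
Your proof is correct and follows precisely the route the paper sets up: Peirce decomposition with respect to $e^\pm$, identification of the diagonal blocks with $\mathbb C[[z]]$ via Lemma \ref{end1} and Corollary \ref{end2}, computation of the off-diagonal $\operatorname{Hom}$ spaces by the same filtration/inverse-limit device used in the proof of Lemma \ref{end1}, and the observation that $z=\theta^2$ makes $\theta$ together with $e^\pm$ generate all four blocks. The paper omits an explicit proof for this corollary, but your argument is the intended one; the only small imprecision is that the $\operatorname{Ext}^1$-vanishing you invoke to extend a partial homomorphism is vanishing in the category $(\mathcal D,\mathcal H)$-mod, where $(\mathcal F_0^{\log})^\sigma$ is injective by (the twist of) Proposition \ref{injective1}, rather than in the category of all $\mathcal D$-modules, so one should write $\operatorname{Ext}^1$ in that subcategory (equivalently, simply use that $\operatorname{Hom}(F_k,I)\to\operatorname{Hom}(F_{k-1},I)$ is surjective because $I$ is injective there).
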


\begin{theorem}\label{eq1} Let $\nu\in\mathbb C$.

(a) If $\nu\notin \mathbb Z$, then  $^{\rm b}(\mathcal D,\mathcal H)_{\nu}$-mod 
is equivalent to the category of finite-dimensional $\mathbb C[z]$-modules with nilpotent 
action of $z$.

(b) If $\nu\in\mathbb Z$, then $^{\rm b}(\mathcal D,\mathcal H)_{\nu}$-mod 
is equivalent to the category of finite-dimensional $\mathbb Z_2$-graded $\mathbb C[\theta]$-modules  with nilpotent action of $\theta$, where the $\mathbb Z_2$-grading on
$\mathbb C[\theta]$ is defined by $\deg \theta = 1$.
\end{theorem}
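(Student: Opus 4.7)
The plan is to apply the antiequivalence of Theorem \ref{antieq} and then describe $\mathcal{A}_\nu = \operatorname{End}_{\mathcal{D}}(\mathcal R_\nu)$ explicitly using the calculations above. In both parts the resulting algebra turns out to be self-opposite, so the antiequivalence of Theorem \ref{antieq} can be composed with the duality $M\mapsto M^*$ on finite-dimensional modules to yield an actual equivalence.

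For part (a), Lemma \ref{end1} gives $\mathcal{A}_\nu \cong \mathbb C[[z]]$. Finite-dimensional modules over $\mathbb C[[z]]$ are precisely finite-dimensional $\mathbb C[z]$-modules on which $z$ acts nilpotently, and since $\mathbb C[[z]]$ is commutative, linear duality is an antiequivalence on this category. Composing it with Theorem \ref{antieq} gives the stated equivalence.

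For part (b), $\mathcal R_0 = \mathcal F_0^{\log} \oplus (\mathcal F_0^{\log})^\sigma$, and I view $\mathcal{A}_0$ as a $2 \times 2$ block algebra with respect to this direct sum. The two diagonal blocks are copies of $\mathbb C[[z]]$ by Lemma \ref{end1} and Corollary \ref{end2}, while the off-diagonal elements $\theta^+$ and $\theta^-$ constructed in Lemma \ref{end2} generate the off-diagonal blocks as free rank-one $\mathbb C[[z]]$-modules, with $\theta^+\theta^- = \theta^-\theta^+ = z$. A finite-dimensional $\mathcal{A}_0$-module $M$ therefore decomposes as $M^+\oplus M^-$ with $M^\pm = e^\pm M$, and $\theta$ restricts to linear maps $\theta^\pm \colon M^\pm \to M^\mp$ whose composition $z = \theta^2$ acts nilpotently. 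This is exactly the data of a finite-dimensional $\mathbb Z_2$-graded $\mathbb C[\theta]$-module with $\deg\theta = 1$ and nilpotent $\theta$. The swap $e^+\leftrightarrow e^-$, $\theta^+\leftrightarrow\theta^-$ provides an isomorphism $\mathcal{A}_0 \cong \mathcal{A}_0^{\mathrm{op}}$, so Theorem \ref{antieq} again composes with linear duality to yield the desired equivalence.

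The main subtlety will be unpacking Corollary \ref{end3} into this $2\times 2$ matrix description and verifying the relation $\theta^+\theta^- = \theta^-\theta^+ = z$, which is implicit in the normalization step of the proof of Lemma \ref{end2}. Once that structural identification is in place, the upgrade from antiequivalence to equivalence (via linear duality, which manifestly preserves nilpotence of $\theta$ and is compatible with the $\mathbb Z_2$-grading) is essentially formal, so no serious obstacle is anticipated.
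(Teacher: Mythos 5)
Your argument is correct and follows essentially the same route as the paper's own proof: apply the Gabriel-type antiequivalence of Theorem \ref{antieq}, identify $\mathcal A_\nu$ via Lemma \ref{end1} and Corollary \ref{end3}, and compose with a duality to upgrade to an equivalence. You simply spell out the $2\times 2$ block structure of $\mathcal A_0$ and the self-oppositeness more explicitly than the paper does, which is a useful elaboration but not a different approach.
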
 

\begin{proof} The theorem is an easy consequence of Theorem \ref{antieq},
Lemma \ref{end1}, and Corollary \ref{end3}.

Indeed, in the first case $\mathcal A_\nu=\mathbb C[[z]]$ and the
category $\mathcal A_\nu$-fmod is equivalent to the category  finite-dimensional $\mathbb C[z]$-modules with nilpotent 
action of $z$. Moreover, the latter category clearly has a nice
duality functor. Hence, we can change the  antiequivalence by an  equivalence.

In the second case  $\mathcal A_\nu$ is generated by $\mathbb C[[\theta]]$ and two idempotents.  
Clearly $\theta$ acts nilpotently on all finite-dimensional  $\mathbb C[[\theta]]$-modules. The
idempotents correspond to projectors on the homogeneous parts of the
$\mathbb Z_2$-grading.
\end{proof}

Observe that, by the above theorem,  $^{\rm b}(\mathcal D,\mathcal H)_{\nu}$-mod is tame.
If $\nu\notin\mathbb Z$, all indecomposable representations in this category are
parametrized by their dimension that can be any positive integer. If $\nu\in\mathbb Z$, the indecomposable
representations are enumerated by their superdimension which can take values
$(n+1,n),(n,n+1)$ and $(n,n)$. In the latter case there are two up to
isomorphism indecomposable representations with the same superdimension, and
one is obtained from the other by changing the parity.

\subsection{The case of ${\mathcal D} (n+1)$ for arbitrary $n$} \label{subsec-dn}
We assume now $\mathcal D=\mathcal D(n+1)$ and repeatedly use the
fact that $\mathcal D(n+1)$ is the tensor product of $n+1$ copies of
$\mathcal D(1)$. We set ${\mathcal F}_{\nu}^{\log} = {\mathcal F}_{\nu_0}^{\log} \otimes... \otimes {\mathcal F}_{\nu_n}^{\log}$ and ${\mathcal F}_{\nu}= {\mathcal F}_{\nu_0} \otimes... \otimes {\mathcal F}_{\nu_n}$. 

It follows from \cite{GS} that for $\nu=(\nu_0,\dots,\nu_n)$
every simple module in  $(\mathcal D,\mathcal H)_\nu$-mod is the tensor
product $S_0\otimes S_1\otimes\dots\otimes S_n$ where each $S_i$ is a
simple module in $(\mathcal D(1),\mathcal H(1))_{\nu_i}$-mod.

Let ${\rm Int} (\nu)$ be the set of all $i$ such that $\nu_i \in \mathbb Z$ and $\mathcal P (\nu)$ be the power set of ${\rm Int}(\nu)$.
For every $J\in\mathcal P(\nu)$ set
$$\mathcal S_\nu(J):=S_0\otimes S_1\otimes\dots\otimes S_n,$$
where $S_i=\mathcal F_{\nu_i}$ if $i\notin {\rm Int}(\nu)$,  $S_i=\mathcal F^+_{0}$
if $i\in {\rm Int}(\nu)\setminus J$ and $S_i=\mathcal F^-_{0}$
if $i\in J$. 

\begin{lemma}\label{simpleD} Any simple object in  $(\mathcal D,\mathcal H)_\nu$-mod
is isomorphic to $\mathcal S_\nu(J)$ for  $J\in\mathcal P(\nu)$. Moreover, 
$\mathcal S_\nu(J)\simeq \mathcal S_\nu(I)$
if and only if $I=J$. If by $\sigma_i$ we denote the automorphsim of
$\mathcal D$ induced by $\sigma$ on the $i$-th copy of $\mathcal D(1)$
and set $\sigma_J=\prod_{i\in J}\sigma_i$, then
$\mathcal S_\nu(J)=(\mathcal S_\nu(\emptyset))^{\sigma_J}$.
\end{lemma}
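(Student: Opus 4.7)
The plan is to reduce the statement to the rank-one classification from the previous subsection by exploiting the tensor product decomposition $\mathcal D(n+1)\simeq \mathcal D(1)^{\otimes(n+1)}$ and $\mathcal H\simeq \mathcal H(1)^{\otimes(n+1)}$. First I would invoke the cited result from \cite{GS}: every simple object $S$ in $(\mathcal D,\mathcal H)_{\nu}$-mod factors as $S\simeq S_0\otimes\cdots\otimes S_n$ with each $S_i$ simple in $(\mathcal D(1),\mathcal H(1))_{\nu_i}$-mod. Then, applying the rank-one classification recalled at the beginning of \S 5.1, each $S_i$ is forced to be $\mathcal F_{\nu_i}$ when $\nu_i\notin\mathbb Z$, while for $\nu_i\in\mathbb Z$ (i.e.\ $i\in \operatorname{Int}(\nu)$) there are exactly two choices $\mathcal F_0^+$ and $\mathcal F_0^-$. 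Recording the set $J\subseteq \operatorname{Int}(\nu)$ of indices for which the second choice is made produces exactly the module $\mathcal S_\nu(J)$, establishing part one.

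For the non-isomorphism of $\mathcal S_\nu(J)$ and $\mathcal S_\nu(I)$ when $I\neq J$, I would distinguish these modules by their supports. The rank-one modules have distinguishable supports: with respect to $t_i\partial_i$, the module $\mathcal F_0^+=\mathbb C[t_i]$ is supported in $\mathbb Z_{\geq 0}\varepsilon_i$ while $\mathcal F_0^-$ is supported in $\mathbb Z_{<0}\varepsilon_i$. Since the support of a tensor product is the sum of the individual supports, the projection of $\operatorname{supp}\mathcal S_\nu(J)$ to each integer coordinate $i\in \operatorname{Int}(\nu)$ determines whether $i\in J$ or not. Hence $I\neq J$ forces $\operatorname{supp}\mathcal S_\nu(I)\neq \operatorname{supp}\mathcal S_\nu(J)$, which rules out any $\mathcal D$-module isomorphism between them.

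The final assertion $\mathcal S_\nu(J)=(\mathcal S_\nu(\emptyset))^{\sigma_J}$ reduces to a tensor-factor-wise check. On the $i$-th factor, the automorphism $\sigma_i$ from \S 5.1 satisfies $(\mathcal F_0^+)^{\sigma_i}\simeq\mathcal F_0^-$, and for $j\neq i$ the factor $\sigma_i$ acts trivially. Since $\sigma_J=\prod_{i\in J}\sigma_i$ and the $\sigma_i$ commute (they act on different tensor factors), the twist $(\mathcal S_\nu(\emptyset))^{\sigma_J}$ swaps the plus to a minus in exactly those slots indexed by $J$ and leaves all non-integer slots $\mathcal F_{\nu_i}$ unchanged, yielding precisely $\mathcal S_\nu(J)$.

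The only real content is the appeal to \cite{GS} for the tensor-product decomposition of simples; everything else is bookkeeping on top of the rank-one classification. I do not anticipate a serious obstacle, though one should note that the non-isomorphism argument via supports implicitly uses that simple modules in this category are determined up to isomorphism by their $\mathcal H$-support pattern once one knows they are tensor products of rank-one simples, which is precisely what we have from the previous step.
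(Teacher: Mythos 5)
Your proof is correct and matches the paper's approach; the paper's own proof of this lemma is just ``The lemma follows from \cite{GS},'' and you have spelled out exactly the argument that citation is standing in for: tensor-product decomposition of simples from \cite{GS}, the rank-one classification ($\mathcal F_{\nu_i}$ vs.\ $\mathcal F_0^{\pm}$), the support computation (which the paper itself records as equation (\ref{support}) and uses in Lemmas \ref{twistednonint} and \ref{twistedint}), and the factor-wise application of $(\mathcal F_0^+)^{\sigma}\simeq\mathcal F_0^-$.
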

\begin{proof} The lemma follows from \cite{GS}. 
\end{proof}

Similarly as above, for any $J\in\mathcal P(\nu)$ set
$$\mathcal I_\nu(J):=I_0\otimes I_1\otimes\dots\otimes I_n,$$
where $I_j=\mathcal F_{\nu_j}^{\rm log}$ if $j\notin {\rm Int}(\nu)$,
$I_j=\mathcal F_{0}^{\rm log}$ if
$j\in {\rm Int}(\nu)\setminus J$ and $I_j=(\mathcal F_{0}^{\rm log})^{\sigma}$
if $j\in J$. 
Observe that 
$$\mathcal I_\nu(\emptyset)=\prod_{i=0}^n t_i^{\nu_i}\mathbb C[t_0^{\pm 1},\dots,t_n^{\pm 1},u_0,\dots,u_n]$$
with $u_i={\rm log}(t_i)$. We  also set ${\mathcal F}_{\nu}^{\log} (J) = ({\mathcal F}_{\nu}^{\log})^{\sigma_J}$ and ${\mathcal F}_{\nu} (J) = ({\mathcal F}_{\nu})^{\sigma_J}$

\begin{lemma}\label{inj4} 

(a) For any $J\in\mathcal P(\nu)$, $\mathcal I_\nu(J)$ is the
indecomposable injective hull of $\mathcal S_\nu(J)$ in the category  $(\mathcal D,\mathcal H)$-mod.

(b) $\mathcal R_\nu=\bigoplus_{J\in\mathcal P(\nu)}\mathcal I_\nu(J)$ 
is an injective cogenerator in $(\mathcal D,\mathcal H)_\nu$-mod.

\end{lemma}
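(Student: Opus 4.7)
The plan is to establish (a) in two steps and then derive (b) as a quick corollary. For (a), I would first show that $\mathcal S_\nu(J)$ is the unique simple submodule (i.e., the socle) of $\mathcal I_\nu(J)$, which simultaneously gives indecomposability and essentiality of the embedding $\mathcal S_\nu(J) \hookrightarrow \mathcal I_\nu(J)$. Then I would prove that $\mathcal I_\nu(J)$ is injective by adapting the Frobenius reciprocity argument from Proposition \ref{injective1} to the multivariable setting.

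For the socle computation, I would exploit the tensor product structure $\mathcal I_\nu(J) = I_0 \otimes \cdots \otimes I_n$, where each factor $I_j$ is either $\mathcal F_{\nu_j}^{\log}$ or its $\sigma$-twist. Proposition \ref{injective1} (and its $\sigma$-twisted analogue) shows that each $I_j$ has simple socle equal to the corresponding factor $S_j$ of $\mathcal S_\nu(J)$. Since $\mathcal D = \mathcal D(1)^{\otimes(n+1)}$ and a simple $\mathcal D$-submodule of a tensor product of locally finite $\mathcal H$-modules decomposes as a tensor product of simple submodules of the factors (one can read this off the generalized weight decomposition), the socle of $\mathcal I_\nu(J)$ equals $\mathcal S_\nu(J)$.

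For injectivity, I would pick any weight $\lambda \in \operatorname{supp} \mathcal I_\nu(J)$ and note that the generalized weight space $\mathcal I_\nu(J)^{(\lambda)}$ is a tensor product of the corresponding weight spaces of the factors, each a direct limit of $\mathcal H_j/\mathfrak{m}_{\lambda_j}^k$; hence $\mathcal I_\nu(J)^{(\lambda)}$ is injective in the category of locally finite $\mathcal H$-modules. Setting $I_\lambda := \Gamma_{\mathcal H}(\operatorname{Hom}_{\mathcal H}(\mathcal D, \mathcal I_\nu(J)^{(\lambda)}))$, Frobenius reciprocity gives $\operatorname{Hom}_{\mathcal D}(\cdot, I_\lambda) \simeq \operatorname{Hom}_{\mathcal H}(\cdot, \mathcal I_\nu(J)^{(\lambda)})$ as functors on $(\mathcal D, \mathcal H)$-mod, so $I_\lambda$ is injective there. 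The natural map $\mathcal I_\nu(J) \to I_\lambda$ induced by projection to the weight space is nonzero on the simple socle, hence injective by essentiality. Surjectivity comes from a weight-by-weight comparison: the explicit description $\mathcal I_\nu(\emptyset) = \prod t_i^{\nu_i} \mathbb C[t_0^{\pm 1}, \ldots, t_n^{\pm 1}, u_0, \ldots, u_n]$ shows that each generalized weight space of $\mathcal I_\nu(J)$ has the same $\mathcal H$-module structure as $\mathcal I_\nu(J)^{(\lambda)}$, matching $I_\lambda^{(\mu)}$.

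Part (b) is then immediate: since $|\mathcal P(\nu)| \leq 2^{n+1}$ is finite, $\mathcal R_\nu$ is a finite direct sum of injectives from part (a) and hence is injective. By Lemma \ref{simpleD}, every simple object of $(\mathcal D,\mathcal H)_\nu$-mod is some $\mathcal S_\nu(J)$, which embeds into its injective hull $\mathcal I_\nu(J) \subset \mathcal R_\nu$, so $\mathcal R_\nu$ is a cogenerator. The main obstacle I anticipate is the clean verification that the tensor product of injective hulls in $(\mathcal D(1), \mathcal H(1))$-mod remains injective in $(\mathcal D, \mathcal H)$-mod, and specifically the surjectivity of $\mathcal I_\nu(J) \to I_\lambda$; but this reduces to an explicit check on generalized weight spaces given the transparent polynomial description of $\mathcal I_\nu(J)$.
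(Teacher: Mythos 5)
Your proof is correct and follows essentially the same approach as the paper: the paper's own proof reduces the case $J=\emptyset$ to the Frobenius-reciprocity argument of Proposition \ref{injective1} (now in $n+1$ variables) and handles $J\neq\emptyset$ by twisting with the automorphism $\sigma_J$, which is exactly what you do except that you apply the $\sigma$-twists at the level of individual $\mathcal D(1)$-factors rather than via the single global automorphism $\sigma_J$. The extra detail you supply --- the socle-of-tensor-product computation and the weight-by-weight surjectivity check for $\mathcal I_\nu(J)\to I_\lambda$ --- is the natural fleshing-out of the paper's terse pointer to Proposition \ref{injective1}, and part (b) is handled identically in both.
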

\begin{proof}
(a) The proof in the case $J=\emptyset$ is the same as in the case $n=1$,
see Proposition \ref{injective1}. For $J \neq \emptyset$, we use the twist by $\sigma_J$.

Part (b)  follows  from (a).
\end{proof}

Let
$\mathcal A_\nu={\rm End}_\mathcal D(\mathcal R_\nu)$.
Recall that $\mathcal R_\nu=\mathcal R_{\nu_0}\otimes\dots\otimes \mathcal R_{\nu_n}$.
Define a $\mathbb Z^{n+1}$-filtration on $\mathcal R_\nu$ by 
$$F_{m_0,\dots,m_n}(\mathcal R_{\nu})=F_{m_0}(\mathcal R_{\nu_0})\otimes\dots\otimes F_{m_n}(\mathcal R_{\nu_n}).$$ 
Again, it is not hard to see that this filtration is preserved by
any $f\in A_\nu$. For each ${\bold m}\in\mathbb Z^{n+1}$ set 
$$A_\nu^{\bold m}={\rm End}_\mathcal D(F_{\bold m}(\mathcal R_\nu)).$$
Then 
$$\mathcal A_\nu=\lim_{\longleftarrow}\mathcal A_\nu^{\bold m},$$
and 
$$\mathcal A_\nu^{\bold m}=\operatorname{End}_{\mathcal
D(1)}(F_{m_0}(\mathcal R_{\nu_0}))\otimes\dots\otimes \operatorname{End}_{\mathcal
D(1)}(F_{m_n}(\mathcal R_{\nu_n})).$$

For any $i\notin{\rm Int}(\nu)$ let $z_i\in\mathcal A_\nu$ be the
endomorphism induced by $z$ on $\mathcal R_{\nu_i}$. If $i\in{\rm Int}(\nu)$
let $\theta_i,e_i^+,e_i^-$ be induced by the corresponding endomorphisms
of  $\mathcal R_{\nu_i}$.

The proof of the following is straightforward.

\begin{lemma}\label{endgen}  $\mathcal A_\nu$  is generated by
$\mathbb C[[z_i,\theta_j]]_{i\notin{\rm Int}(\nu),j\in{\rm Int}(\nu)}$
and $e_i^\pm$ for all  $i\in{\rm Int}(\nu)$. 
\end{lemma}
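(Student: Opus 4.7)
The plan is to reduce the statement to the one-variable case, which was already handled in Lemma \ref{end1} and Corollary \ref{end3}, and then promote the resulting generation statement from finite truncations to the inverse limit.

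First, I would invoke the identification
\[
\mathcal A_\nu^{\bold m}=\operatorname{End}_{\mathcal D(1)}(F_{m_0}(\mathcal R_{\nu_0}))\otimes\dots\otimes \operatorname{End}_{\mathcal D(1)}(F_{m_n}(\mathcal R_{\nu_n}))
\]
already recorded in the paragraph preceding the lemma. Thus it suffices to understand each tensor factor. For $i\notin \operatorname{Int}(\nu)$ the factor is $\operatorname{End}_{\mathcal D(1)}(F_{m_i}(\mathcal F_{\nu_i}^{\log}))$, which by the proof of Lemma \ref{end1} is isomorphic to $\mathbb C[z_i]/(z_i^{m_i+1})$ and is generated by $z_i$. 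For $i\in \operatorname{Int}(\nu)$ the factor is $\operatorname{End}_{\mathcal D(1)}(F_{m_i}(\mathcal R_0))$; by Lemma \ref{end2} and Corollary \ref{end3} applied in truncated form, it is generated by the idempotents $e_i^\pm$ together with a nilpotent element $\theta_i$ satisfying $\theta_i^2=z_i$ and swapping the two summands.

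Next I would take the inverse limit. Since
\[
\mathcal A_\nu=\lim_{\longleftarrow}\mathcal A_\nu^{\bold m},
\]
and in each fixed multidegree the generators $z_i$, $\theta_j$, $e_j^\pm$ lift compatibly across the projection maps $\mathcal A_\nu^{\bold m'}\to \mathcal A_\nu^{\bold m}$ for $\bold m\leq \bold m'$, they define global elements of $\mathcal A_\nu$. Conversely, any $f\in \mathcal A_\nu$ is determined by its family of restrictions $f_{\bold m}\in \mathcal A_\nu^{\bold m}$, each of which lies in the subalgebra generated by the images of $z_i$, $\theta_j$, $e_j^\pm$. In the limit, polynomials in these generators converge to formal power series in $z_i,\theta_j$ with coefficients in the finite-dimensional subalgebra generated by the idempotents $e_j^\pm$. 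This is exactly the statement that $\mathcal A_\nu$ is generated by $\mathbb C[[z_i,\theta_j]]_{i\notin \operatorname{Int}(\nu), j\in \operatorname{Int}(\nu)}$ and the $e_j^\pm$.

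The main subtlety, which is why the lemma is asserted as straightforward but still requires care, is verifying that the tensor product decomposition at each finite level is compatible with the generation by these specific elements, i.e.\ that an endomorphism of a tensor product of $\mathcal D(1)$-modules really is generated (at the level of $\mathcal D$-endomorphisms) by tensor products of endomorphisms of the factors. This is immediate here because the factors are pairwise disjoint in weight support across different copies of $\mathcal D(1)$, so $\operatorname{End}_{\mathcal D}$ of the tensor product equals the tensor product of the $\operatorname{End}_{\mathcal D(1)}$ factors, and the single-variable generation results of Lemma \ref{end1} and Corollary \ref{end3} then directly assemble into the claim.
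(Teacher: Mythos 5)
Your proof is correct and supplies exactly the argument the paper implicitly intends when it declares the lemma ``straightforward'': the preceding paragraph already establishes the tensor decomposition $\mathcal A_\nu^{\bold m}=\bigotimes_i\operatorname{End}_{\mathcal D(1)}(F_{m_i}(\mathcal R_{\nu_i}))$ and the inverse limit $\mathcal A_\nu=\varprojlim\mathcal A_\nu^{\bold m}$, and you correctly feed in the single-variable descriptions from Lemma~\ref{end1} and Corollary~\ref{end3} and pass to the limit. One small notational slip: for $i\in{\rm Int}(\nu)$ the paper does not introduce a separate generator $z_i$, so you should say $\theta_i^2$ is the central element playing the role of $z$ in that factor rather than write $\theta_i^2=z_i$; this does not affect the generation statement.
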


Now applying Theorem \ref{antieq} leads to  the following result 

\begin{proposition}\label{equivgen} Assume that $|{\rm Int}(\nu)|=k$, and let
$$G_k=\mathbb Z_2^k=\bigoplus_{i=1}^k \mathbb Z_2 \delta_i.$$ Consider the 
$G_k$-grading on $\mathbb C[\theta_1,...,\theta_k,z_1,\dots,z_{n+1-k}]$ defined by $\deg (z_i)=0$
and $\deg (\theta_j)=\delta_j$.
Then  $^{\rm b}(\mathcal D,\mathcal H)_\nu$-mod
is equivalent to the category of 
finite-dimensional $G_k$-graded $\mathbb C[\theta_1,...,\theta_k,z_1,\dots,z_{n+1-k}]$-modules with nilpotent
action of all  $z_i$ and $\theta_j$.
\end{proposition}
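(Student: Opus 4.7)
My plan is to combine Theorem \ref{antieq} with the structural description of $\mathcal{A}_\nu$ provided by Lemma \ref{endgen}, following the blueprint of Theorem \ref{eq1}(b).

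First, Theorem \ref{antieq} supplies an antiequivalence between $^{\rm b}(\mathcal D,\mathcal H)_\nu$-mod and the category $\mathcal{A}_\nu$-fmod, while Lemma \ref{endgen} identifies $\mathcal{A}_\nu$ as topologically generated by commuting elements $z_i$ (for $i\notin{\rm Int}(\nu)$) and $\theta_j$ (for $j\in{\rm Int}(\nu)$), together with pairwise orthogonal idempotents $e_j^{\pm}$ satisfying $e_j^{+}+e_j^{-}=1$. Since the $z_i$ and $\theta_j$ are topologically nilpotent in the completion, they must act nilpotently on any finite-dimensional module, so the $\mathcal{A}_\nu$-action factors through the polynomial algebra $\mathbb{C}[\theta_1,\dots,\theta_k,z_1,\dots,z_{n+1-k}]$ enhanced by the idempotents $e_j^{\pm}$.

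Next I would recast the idempotent data as a $G_k$-grading. For $J\subset{\rm Int}(\nu)$ put $e_J=\prod_{j\in J}e_j^{-}\prod_{j\in{\rm Int}(\nu)\setminus J}e_j^{+}$; these are pairwise orthogonal idempotents summing to $1$, indexed by $\mathcal{P}({\rm Int}(\nu))\cong G_k$ via $J\mapsto\sum_{j\in J}\delta_j$. For any finite-dimensional $\mathcal{A}_\nu$-module $M$, the decomposition $M=\bigoplus_J e_J M$ defines a $G_k$-grading. Each $z_i$ commutes with every $e_j^{\pm}$, hence acts in degree $0$. Applying Lemma \ref{end2} in the $j$-th tensor factor of $\mathcal{R}_\nu$, one obtains $\theta_j e_j^{\pm}=e_j^{\mp}\theta_j$, while $\theta_j$ commutes with $e_i^{\pm}$ for $i\neq j$; therefore $\theta_j$ acts in degree $\delta_j$. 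This identifies $\mathcal{A}_\nu$-fmod with the category of finite-dimensional $G_k$-graded $\mathbb{C}[\theta_1,\dots,\theta_k,z_1,\dots,z_{n+1-k}]$-modules with nilpotent action of all generators.

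Finally, to convert the antiequivalence into an equivalence I would apply the $\mathbb{C}$-linear duality $M\mapsto M^{\ast}={\rm Hom}_{\mathbb{C}}(M,\mathbb{C})$ on the target category: since the polynomial algebra is commutative and the grading group $G_k$ is abelian, $M^{\ast}$ carries a natural $G_k$-graded module structure (after inverting the grading, which is harmless as $-{\rm id}$ is an automorphism of $G_k$), and the duality is a contravariant self-equivalence of the target category. Composing it with the antiequivalence of Theorem \ref{antieq} yields the claimed equivalence. The main point requiring care is verifying the commutation relations between $\theta_j$ and the idempotents $e_i^{\pm}$ in the tensor product algebra $\mathcal{A}_\nu$: these are clear factor by factor from Lemma \ref{end2}, but need some bookkeeping across tensor factors. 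Everything else is formal.
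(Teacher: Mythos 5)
Your argument is correct and is the same route the paper takes: the paper simply states that Proposition~\ref{equivgen} follows by applying Theorem~\ref{antieq} to the description of $\mathcal A_\nu$ in Lemma~\ref{endgen} (mirroring the proof of Theorem~\ref{eq1}), and you have just made explicit the two implicit steps -- encoding the orthogonal idempotents $e_J$ as the $G_k$-grading with $\theta_j e_j^{\pm}=e_j^{\mp}\theta_j$, and composing with the $\mathbb C$-linear duality on the graded-module side, which works because $G_k$ is $2$-torsion.
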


Note that for $^{\rm b}(\mathcal D,\mathcal H)_\nu$-mod is wild for $n>0$.

\subsection{The category ${}^{\rm b}_{\rm s}(\mathcal D, \mathcal H)$-mod}\label{subsec-d-weight} 

Recall the definition of $^{\rm b}_{\rm s}(\mathcal D, \mathcal H)$-mod in Section \ref{subsec-wtde}.

Define the  left exact functor
$S_{\mathcal H'}: (\mathcal D, \mathcal H)$-mod $\to {} _{\rm s}(\mathcal D, \mathcal H)$-mod to be the one that  maps $M$ to its submodule consisting of all $\mathcal H'$-eigenvectors. By
 general nonsense arguments, $S_{\mathcal H'}$ maps injectives to
injectives and blocks to blocks. Therefore, $S_{\mathcal H'}(\mathcal R_\nu)$
is an injective cogenerator 
in the block $_{\rm s}(\mathcal D, \mathcal H)_\nu$-mod.
It is not hard to see that 
$$S_{\mathcal H'}(\mathcal R_\nu)=\mathcal F_\nu\otimes\mathbb C[u],$$
where $u=\log (t_0t_1\cdots t_n)$.
If we set $\mathcal B_\nu=\operatorname{End}(S_{\mathcal H'}(\mathcal R_\nu))$ 
then clearly  $\mathcal B_\nu$ is the quotient of  $\mathcal A_\nu$ by
the ideal generated by $\frac{\partial}{\partial u_i}-\frac{\partial}{\partial u_i}$
for all $i\neq j$.
This implies the following.

\begin{lemma} \label{weightalgebra} For any  $\nu$, the ring
  $\mathcal B_\nu$ is the $\mathbb C[[z]]$-algebra  generated by 
  $\theta_i$ and the  idempotents $e^\pm_i$ for all $i\in\rm{Int}(\nu)$,  subject to the relations $\theta_i^2=z$. 
\end{lemma}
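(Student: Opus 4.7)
The plan is to read off the structure of $\mathcal B_\nu$ directly from the already-established description of $\mathcal A_\nu$ in Lemma \ref{endgen} combined with the quotient presentation in the paragraph immediately preceding the statement, namely $\mathcal B_\nu = \mathcal A_\nu/(\partial/\partial u_i - \partial/\partial u_j : i\neq j)$.

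First I would identify, factor by factor, the derivation $\partial/\partial u_i$ as an element of $\mathcal A_\nu$. On a non-integral factor $\mathcal R_{\nu_i}=\mathcal F_{\nu_i}^{\log}$, the proof of Lemma \ref{end1} shows that the embedding $\mathbb C[[z]]\hookrightarrow\operatorname{End}_{\mathcal D(1)}(\mathcal F_{\nu_i}^{\log})$ sends $z$ to $\partial/\partial u_i$, so $z_i = \partial/\partial u_i$ in $\mathcal A_\nu$. On an integral factor $\mathcal R_{\nu_i}=\mathcal F_0^{\log}\oplus(\mathcal F_0^{\log})^\sigma$, Lemma \ref{end2} was constructed so that $\theta_i^2 = z_i = \partial/\partial u_i$. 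Hence, in $\mathcal A_\nu$, the operator $\partial/\partial u_i$ equals $z_i$ when $i\notin\mathrm{Int}(\nu)$ and equals $\theta_i^2$ when $i\in\mathrm{Int}(\nu)$.

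Next I would push these identifications through the quotient. Imposing $\partial/\partial u_i=\partial/\partial u_j$ for all $i,j$ collapses the commutative subalgebra $\mathbb C[[z_i,\theta_j^2]]_{i\notin\mathrm{Int}(\nu),\,j\in\mathrm{Int}(\nu)}$ of $\mathcal A_\nu$ to a single formal power series ring $\mathbb C[[z]]$, with the common image $z$ given by either $z_i$ (any $i\notin\mathrm{Int}(\nu)$) or $\theta_j^2$ (any $j\in\mathrm{Int}(\nu)$). After this collapse, the generators $z_i$ for $i\notin\mathrm{Int}(\nu)$ become redundant, and the only generators left from Lemma \ref{endgen} are $z$, the $\theta_j$ and the orthogonal idempotents $e_j^\pm$ for $j\in\mathrm{Int}(\nu)$. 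Every relation of $\mathcal A_\nu$ involving the $z_i$ or the squares $\theta_j^2$ descends to the single relation $\theta_j^2=z$ for each $j\in\mathrm{Int}(\nu)$; all remaining structural relations (orthogonality of $e_j^\pm$ within one integral factor, $\theta_j e_j^{\pm}=e_j^{\mp}\theta_j$, and commutation of generators belonging to different tensor factors) are already encoded in the $\mathbb C[[z]]$-algebra freely generated by the $\theta_j,e_j^\pm$ with the stated relations.

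The only real verification is that no extra relations appear after the quotient, i.e.\ that the described $\mathbb C[[z]]$-algebra maps onto $\mathcal B_\nu$ injectively. This is where I expect the bookkeeping to be most delicate: one should check that the natural map from the abstractly presented algebra to $\mathcal B_\nu=\operatorname{End}(\mathcal F_\nu\otimes\mathbb C[u])$ induces an isomorphism on each graded piece of the $\mathbb Z^{n+1}$-filtration used in \S\ref{subsec-dn}, equivalently that it is an isomorphism on every $\operatorname{End}$ of the finite-dimensional truncations $F_{\mathbf m}(S_{\mathcal H'}(\mathcal R_\nu))$. Since the truncations are explicit finite tensor products, this comparison reduces to a straightforward dimension count in each factor, after which passing to the inverse limit over $\mathbf m$ yields the stated description of $\mathcal B_\nu$.
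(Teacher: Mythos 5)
Your approach matches what the paper leaves implicit: the paper asserts (without proof) that $\mathcal B_\nu$ is the quotient of $\mathcal A_\nu$ by the ideal generated by $\partial/\partial u_i-\partial/\partial u_j$, and then states the lemma; the entire content of the lemma is precisely the identification you make, namely that $\partial/\partial u_i$ equals $z_i$ on non-integral factors (by the proof of Lemma \ref{end1}, where $j$ sends $z\mapsto\partial/\partial u$) and equals $\theta_i^2$ on integral factors (by the normalization in Lemma \ref{end2}). So the first two paragraphs of your sketch reconstruct the intended argument correctly.

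Two small imprecisions in the last step are worth flagging. First, the phrase ``the $\mathbb C[[z]]$-algebra freely generated by the $\theta_j,e_j^\pm$ with the stated relations'' is contradictory as written; what you mean (and what the lemma means) is the $\mathbb C[[z]]$-algebra presented by the generators $\theta_i,e_i^\pm$ subject to the standard idempotent/anticommutation/commutation relations together with $\theta_i^2=z$. Second, the filtration you invoke is not quite the right one: the $\mathbb Z^{n+1}$-filtration $F_{\mathbf m}(\mathcal R_\nu)=\bigotimes_iF_{m_i}(\mathcal R_{\nu_i})$ does not pass to a filtration of $S_{\mathcal H'}(\mathcal R_\nu)=\mathcal F_\nu\otimes\mathbb C[u]$ by tensor products, since $u=u_0+\dots+u_n$ mixes the factors; the natural filtration on $S_{\mathcal H'}(\mathcal R_\nu)$ is the $\mathbb Z$-filtration by degree in $u$. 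The verification that no extra relations appear is better phrased directly at the level of $\mathcal A_\nu$: since $\mathcal A_\nu=\varprojlim\bigotimes_i\operatorname{End}(F_{m_i}(\mathcal R_{\nu_i}))$ inherits a presentation from the presentations of the $n+1$ factors, and the ideal you quotient by is exactly $(z_i-z_j,\,z_i-\theta_j^2,\,\theta_i^2-\theta_j^2)$, the quotient can be read off without reference to $\operatorname{End}$ of truncations of $S_{\mathcal H'}(\mathcal R_\nu)$. With that reformulation, your proof is complete and faithful to the paper's intent.
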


Using  Theorem \ref{antieq} again, we obtain the following result.

\begin{proposition}\label{equivgenweight} Assume 
that $|{\rm    Int}(\nu)|=k>0$. 
Consider the $G_k$-grading on ${\mathcal B} (k)= \mathbb C[\theta_1,...,\theta_k]/(\theta_i^2-\theta_j^2)$ defined by 
$\deg (\theta_i)=\delta_i$.
Then  $_{\rm s}^{\rm b}(\mathcal D,\mathcal H)_\nu$-mod
is equivalent to the category of 
finite-dimensional $G_k$-graded ${\mathcal B} (k)$-modules  with nilpotent
action of all $\theta_i$.
If $k=0$, then  $_{\rm s}^{\rm b}(\mathcal D,\mathcal H)_\nu$-mod
is equivalent to the category of finite-dimensional $\mathbb C[z]$-modules 
with nilpotent action of $z$.
\end{proposition}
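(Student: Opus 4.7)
The plan is to argue along the same lines as the proof of Theorem~\ref{eq1}, combining the semisimple version of Theorem~\ref{antieq} with the description of $\mathcal B_\nu$ supplied by Lemma~\ref{weightalgebra}. Since $S_{\mathcal H'}(\mathcal R_\nu)$ is an injective cogenerator of $_{\rm s}(\mathcal D,\mathcal H)_\nu$-mod (because $S_{\mathcal H'}$ is right adjoint to the inclusion and therefore preserves injectives), Theorem~\ref{antieq} applied to this subcategory yields a contravariant equivalence between $_{\rm s}^{\rm b}(\mathcal D,\mathcal H)_\nu$-mod and the category $\mathcal B_\nu$-fmod of finite-dimensional $\mathcal B_\nu$-modules.

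If $k=0$, Lemma~\ref{weightalgebra} degenerates to $\mathcal B_\nu\simeq \mathbb C[[z]]$, and $\mathbb C[[z]]$-fmod is precisely the category of finite-dimensional $\mathbb C[z]$-modules with nilpotent action of $z$. This category is self-dual via the linear dual, so the contravariant equivalence above becomes the asserted covariant one.

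For $k>0$, Lemma~\ref{weightalgebra} furnishes $k$ orthogonal pairs of idempotents $(e^+_i,e^-_i)$ with $e^+_i+e^-_i=1$; since these arise from endomorphisms of distinct tensor factors, they commute across indices. Consequently, the $2^k$ products $\prod_i e^{\epsilon_i}_i$ form a complete orthogonal system, and applying them to any $M\in\mathcal B_\nu$-fmod produces a direct sum decomposition indexed by $G_k=\mathbb Z_2^k$. From the $n=1$ computation in Lemma~\ref{end2}, $\theta_i$ swaps $e^+_i$ and $e^-_i$ while commuting with the remaining idempotents, and is therefore homogeneous of degree $\delta_i$. Together with the relations $\theta_i^2=z=\theta_j^2$, this identifies the algebra acting on the graded pieces with $\mathcal B(k)=\mathbb C[\theta_1,\dots,\theta_k]/(\theta_i^2-\theta_j^2)$; nilpotency of $z$ on a finite-dimensional $\mathbb C[[z]]$-module then forces each $\theta_i$ to act nilpotently.

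Finally, the target category of finite-dimensional $G_k$-graded $\mathcal B(k)$-modules with nilpotent $\theta_i$ is self-dual: $\mathcal B(k)$ is commutative, and the linear dual $M^*$ inherits a $G_k$-grading via $\delta\mapsto-\delta$, which is the identity on $\mathbb Z_2^k$. This self-duality converts the contravariant equivalence into the covariant one claimed. The only non-formal step is checking that the idempotent decomposition really implements the stated $G_k$-grading and that $\theta_i$ shifts it by exactly $\delta_i$; this is a direct translation of Lemma~\ref{end2} into the tensor-product setting and is the main (though modest) obstacle in the argument.
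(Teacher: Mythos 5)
Your proposal is correct and follows the same route the paper takes: apply Theorem~\ref{antieq} with the injective cogenerator $S_{\mathcal H'}(\mathcal R_\nu)$, identify its endomorphism algebra via Lemma~\ref{weightalgebra}, and translate the resulting category of finite-dimensional modules into the graded module category using the idempotents to produce the $G_k$-grading and the linear dual to convert the contravariant equivalence into a covariant one. The paper compresses all of this into one sentence (``Using Theorem~\ref{antieq} again, we obtain the following result''), so you have simply supplied the details the authors left implicit.
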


\begin{corollary}\label{wildweight}  
The category $_{\rm s}^{\rm b}(\mathcal D,\mathcal H)_\nu$-mod is tame if and
only if $|{\rm    Int}(\nu)|=0,1$. \end{corollary}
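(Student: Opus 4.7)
The plan is to combine Proposition \ref{equivgenweight} with the tameness classification of Proposition \ref{wildweight1}(a). The argument naturally splits into two cases according to whether $k := |\mathrm{Int}(\nu)|$ is zero or positive.

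If $k = 0$, Proposition \ref{equivgenweight} identifies $_{\rm s}^{\rm b}(\mathcal D,\mathcal H)_\nu$-mod with the category of finite-dimensional $\mathbb C[z]$-modules on which $z$ acts nilpotently. Every such module decomposes uniquely into Jordan blocks $\mathbb C[z]/(z^n)$, so the category has finite representation type and is therefore tame.

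If $k \geq 1$, Proposition \ref{equivgenweight} yields an equivalence with the category of finite-dimensional $G_k$-graded $\mathcal B(k)$-modules on which each $\theta_i$ acts nilpotently. The crux of the proof is to recognize this target category as that of finite-dimensional nilpotent modules over the path algebra $B(k)$ from Section 4. To see this, one notes that a $G_k$-graded $\mathcal B(k)$-module $M = \bigoplus_{v\in G_k} M_v$ is naturally a representation of the quiver $C(k)$: the grading furnishes the vertex decomposition of the corresponding $B(k)$-module, each $\theta_i$ of degree $\delta_i$ provides the pair of arrows of color $i$ joining adjacent vertices of the $k$-cube, and the defining relations $\theta_i^2 = \theta_j^2$ and $\theta_i \theta_j = \theta_j \theta_i$ in $\mathcal B(k)$ coincide exactly with the ``equivalent paths of the same length with the same endpoints'' relations used in the definition of $B(k)$. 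Once this identification is in place, the representation type of the block $_{\rm s}^{\rm b}(\mathcal D,\mathcal H)_\nu$-mod is read off directly from Proposition \ref{wildweight1}(a).

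The main technical step, and the one requiring the most care, is the verification of the equivalence between $G_k$-graded $\mathcal B(k)$-mod and $B(k)$-mod. Concretely, one has to match a path of length $\ell$ from $v$ to $v+g$ in $C(k)$ (modulo relations) with a monomial in the $\theta_i$ of $G_k$-degree $g$ carrying $(\ell-|g|)/2$ factors of $z = \theta_i^2$. This dictionary is essentially bookkeeping, but it is the only nontrivial check and it also ensures that nilpotent action of $z$ on the $\mathcal B(k)$-module side matches finite-dimensionality on the $B(k)$-module side.
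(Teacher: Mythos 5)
Your reduction is the correct one, and it is the only natural route: a $G_k$-graded $\mathcal B(k)$-module with nilpotent $\theta_i$'s is the same thing as a nilpotent $B(k)$-module, because the $2^k$ graded pieces furnish the vertex decomposition of the $k$-cube quiver, each $\theta_i$ contributes the two arrows of color $i$ on every edge, and the commutativity of the $\theta_i$'s together with $\theta_i^2=\theta_j^2$ generate precisely the ``equal-length paths with the same endpoints'' relations defining $B(k)$. So for $k=|\mathrm{Int}(\nu)|\geq 1$ the block is controlled by $B(k)$, and for $k=0$ by $\mathbb C[z]$. Up to this point the argument is sound and is the one the paper clearly intends.

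The gap is in the phrase ``read off directly from Proposition~\ref{wildweight1}(a)''. That proposition says $B(k)$ is tame if and only if $k\in\{1,2\}$, not $k\in\{1\}$. Combined with the trivially tame $k=0$ case, your reasoning therefore produces the conclusion that $_{\rm s}^{\rm b}(\mathcal D,\mathcal H)_\nu$-mod is tame if and only if $|\mathrm{Int}(\nu)|\in\{0,1,2\}$, which is not what the corollary asserts. You never carry out the final step, so the mismatch goes unnoticed. The two statements cannot both be true: either your quiver identification has an off-by-one error (it does not, as far as I can check --- the $2^{|\mathrm{Int}(\nu)|}$ simples of the block match the $2^k$ vertices of $C(k)$ on the nose), or the corollary as printed misses the $|\mathrm{Int}(\nu)|=2$ case, consistent with the explicit skewed-gentle tameness argument for $B(2)$ given in the proof of Proposition~\ref{wildweight1}. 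Since you are asked to prove the stated corollary, you must at least make the final deduction explicit and flag the discrepancy, rather than leave it implicit that Proposition~\ref{wildweight1}(a) delivers the claimed boundary $|\mathrm{Int}(\nu)|\leq 1$.
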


\section{$\mathcal D^E$ and twisted differential operators on $\mathbb P^n$} In this section we assume that $n \geq 1$.
\subsection{Connection between weight $\mathcal D$-modules and weight  $\mathcal D^E$-modules} \label{subsec-de}

For any $M\in  (\mathcal D, \mathcal H)$-mod, let 
$$\Gamma_a(M)=\bigcup_{l>0}\operatorname{Ker}(E-a)^l.$$
It is not hard to see that $\Gamma_a$ is an exact functor from the
category $(\mathcal D, \mathcal H)$-mod to the category 
$(\mathcal D^E, \mathcal H)^{a}$-mod.
The induction functor
$$\Phi(X)=\mathcal D\otimes_{\mathcal D^E} X$$
is its left adjoint.

\begin{lemma}\label{twisted1}
(a) $\Gamma_a\circ\Phi$ is isomorphic to the identity functor.

(b) If $S$ is simple in $(\mathcal D, \mathcal H)$-mod, then
$\Gamma_a(S)$ is either simple or zero. 

(c) Let $S_1$ and $S_2$ be
non-isomorphic simple modules in  $(\mathcal D, \mathcal H)$-mod
and $\Gamma_a(S_i)\neq 0$ for $i=1,2$. Then $\Gamma_a(S_1)$ and 
$\Gamma_a(S_2)$ are not isomorphic.

(d) Any simple $M$ in $(\mathcal D^E, \mathcal H)^{a}$-mod
is isomorphic to $\Gamma_a(S)$ for some simple $S$ in 
$(\mathcal D, \mathcal H)$-mod.

(e) If $I$ is the indecomposable injective hull of a simple $S$ in 
$(\mathcal D, \mathcal H)$-mod and $\Gamma_a(S)\neq 0$, then
$\Gamma_a(I)$ is injective in $(\mathcal D^E, \mathcal H)^{a}$-mod.  
\end{lemma}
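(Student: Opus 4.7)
My plan hinges on the $\mathbb Z$-grading $\mathcal D=\bigoplus_{m\in\mathbb Z}\mathcal D^m$ with $\mathcal D^E=\mathcal D^0$. Because $E\in\mathcal H$ acts locally finitely on every object of $(\mathcal D,\mathcal H)$-mod, the functor $\Gamma_a$ is simply projection onto the generalized $a$-eigenspace of $E$, and is in particular exact and additive on submodules. For \textbf{(a)}, write $\Phi(X)=\bigoplus_m\mathcal D^m\otimes_{\mathcal D^E}X$; the relation $[E,d]=md$ for $d\in\mathcal D^m$, combined with the local nilpotence of $E-a$ on $X$, shows that $E$ has generalized eigenvalue $a+m$ on $\mathcal D^m\otimes X$, so $\Gamma_a(\Phi(X))=\mathcal D^0\otimes_{\mathcal D^E}X=X$. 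For \textbf{(b)}, take $0\neq v\in\Gamma_a(S)$; simplicity of $S$ gives $\mathcal D v=S$, and applying $\Gamma_a$ respects the grading, yielding $\mathcal D^E v=\Gamma_a(S)$, which proves $\Gamma_a(S)$ is simple as a $\mathcal D^E$-module.

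Parts \textbf{(c)} and \textbf{(d)} share a common tool: for any $\Phi$-type module, the \emph{unique} maximal $\mathcal D$-submodule with vanishing $(a)$-component. It is well-defined because a sum of submodules with trivial $(a)$-component still has trivial $(a)$-component (by the direct sum decomposition into generalized weight spaces for $\mathcal H$). For (c), the counit $\epsilon_S\colon\Phi(\Gamma_a(S))\twoheadrightarrow S$ is surjective by (b), and $K_S:=\ker\epsilon_S$ satisfies $(K_S)^{(a)}=0$ by (a); this $K_S$ is the maximal such submodule, for a strictly larger one would produce a proper nonzero submodule of $S$ with zero $(a)$-component, contradicting $S^{(a)}=\Gamma_a(S)\neq 0$ and the simplicity of $S$. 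An isomorphism $\Gamma_a(S_1)\cong\Gamma_a(S_2)$ functorially induces $\Phi(\Gamma_a(S_1))\cong\Phi(\Gamma_a(S_2))$ sending $(a)$-component to $(a)$-component, hence carrying $K_{S_1}$ onto $K_{S_2}$, and therefore $S_1\cong S_2$. For (d), given simple $M$, first note that $E$ acts as the scalar $a$ on $M$, since $\ker(E-a)$ is a nonzero submodule; hence $\Phi(M)^{(a)}=M$ by (a). Let $K$ be the maximal $\mathcal D$-submodule of $\Phi(M)$ with $K^{(a)}=0$; then $S:=\Phi(M)/K$ has $\Gamma_a(S)=M$, and is simple: any nonzero submodule $N/K$ of $S$ satisfies $N^{(a)}\neq 0$ by maximality of $K$, so $N^{(a)}=M$ by simplicity of $M$, and then $N\supset M$ forces $N=\mathcal D\cdot M=\Phi(M)$.

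For \textbf{(e)}, since $\Phi$ is left adjoint to $\Gamma_a$, the standard abstract-nonsense argument shows that $\Gamma_a$ preserves injectives provided $\Phi$ is exact: given an injection $N'\hookrightarrow N$ in $(\mathcal D^E,\mathcal H)^a$-mod and a morphism $N'\to\Gamma_a(I)$, one transports the problem to an extension problem $\Phi(N')\to I$ inside $(\mathcal D,\mathcal H)$-mod solvable by injectivity of $I$, and transports back. The main obstacle is therefore to establish that $\mathcal D$ is flat (indeed free) as a right $\mathcal D^E$-module. I expect this to follow either by exhibiting an explicit homogeneous basis via the $\mathbb Z$-grading --- in each degree $m$ one uses a canonical element such as $t_0^m$ or $\partial_0^{-m}$ together with compatibility relations --- or, more cleanly, by invoking standard results on the flatness of coordinate rings over their invariants under a reductive $\mathbb G_m$-action (here the action on $\mathcal D$ generated by $\operatorname{ad}E$, whose invariants are exactly $\mathcal D^E$).
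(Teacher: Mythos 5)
Parts (a) and (b) of your argument are correct; (a) is the paper's argument verbatim, and for (b) you give a more conceptual argument (generation by any nonzero $(a)$-component vector) where the paper instead invokes that simple objects are multiplicity-free weight modules and compares supports. Your arguments for (c) and (d) are also correct and are genuinely different from, and arguably cleaner than, the paper's: you introduce the canonical maximal submodule $K$ of $\Phi(X)$ with $K^{(a)}=0$, use it to reconstruct the simple $S$ from $\Gamma_a(S)$ in (c), and to manufacture the required simple $S$ from $M$ in (d). The paper instead settles (c) with a one-line observation that non-isomorphic simple $\mathcal D$-modules have disjoint supports, and (d) by taking a simple subquotient of $\Phi(M)$; your version has the merit of not relying on these particular structural facts about simple $\mathcal D$-modules.

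Part (e) has a real gap. Your strategy is the standard ``left adjoint exact $\Rightarrow$ right adjoint preserves injectives'' argument, which reduces everything to the flatness of $\mathcal D$ as a right $\mathcal D^E$-module, and you explicitly defer this flatness. That flatness is unlikely to hold for $n\geq 1$: on associated graded level $\operatorname{gr}\mathcal D=\mathbb C[t_0,\dots,t_n,\xi_0,\dots,\xi_n]$ is \emph{not} flat over $\operatorname{gr}\mathcal D^E=\mathbb C[t_i\xi_j]$ (the fiber over the vertex of the Segre cone jumps in dimension), and the invariant-theory heuristic you invoke fails in exactly this situation. More tellingly, if $\Phi$ were exact then $\Gamma_a$ would carry \emph{every} injective to an injective, and the hypothesis $\Gamma_a(S)\neq 0$ in the statement of (e) would be vacuous; the fact that the authors impose it signals that they do not have (or believe) exactness of $\Phi$. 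The paper's proof is careful precisely at this point: it considers $Z=\ker\Phi(\varphi)$, uses $\Gamma_a(Z)=0$ (from (a) and exactness of $\Gamma_a$) together with the hypothesis $\Gamma_a(S)\neq 0$ to show that the lift $\eta'\colon\Phi(X)\to I$ annihilates $Z$ (otherwise $\eta'(Z)$ would contain the simple socle $S$ of $I$, exhibiting $S$ as a subquotient of $Z$ and forcing $\Gamma_a(S)=0$), and only then invokes injectivity of $I$. You should replace your flatness detour with this argument.
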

\begin{proof} (a) Use
$$\Phi(X)\simeq\bigoplus_{m\in\mathbb Z}\mathcal D^m\otimes_{\mathcal D^E} X,$$ 
and
$$\mathcal D^m\otimes_{\mathcal D^E} X=\bigcup_{l>0}\operatorname{Ker}(E-a-m)^l.$$
Since $\mathcal D^0=\mathcal D^E$, we have  $$\Gamma_a(X)=\mathcal D^0\otimes_{\mathcal D^E} X\simeq X.$$ 

(b) Note that both $S$ and $\Gamma_a(S)$ are multiplicity free weight
modules.
If $N$ is a proper non-zero submodule in $\Gamma_a(S)$, then ${\rm supp} N$ is
a proper non-empty subset in ${\rm supp} \Gamma_a(S)$. Let $S'$ be the submodule
in $S$ generated by $N$. Then  ${\rm supp}\, S'$ is a proper non-empty
subset in ${\rm supp} S$. That contradicts to the simplicity of $S$.

(c) The statement follows from the fact that $S_1$ and $S_2$ have disjoint supports.

(d) By (a), we have $M\simeq \Gamma_a(\Phi(M))$. Hence
$M\simeq \Gamma_a(S)$ for some simple subquotient $S$ of $\Phi(M)$.

(e) Let $\varphi: X\to Y$ be an injective homomorphism of modules in  
$(\mathcal D^E, \mathcal H)^{a}$-mod, and $\eta:X\to \Gamma_a(I)$ be
some homomorphism. Let $Z$ be the kernel of
$\Phi(\varphi):\Phi(X)\to\Phi(Y)$, and $\eta':\Phi(X)\to I$ be the
homomorphism induced by $\eta$. So we have $\Gamma_a(\eta')=\eta$. 
Note that by (a) we have $\Gamma_a(Z)=0$. We claim that
$\eta'(Z)=0$. Indeed, otherwise $\eta'(Z)$ contains
$S$, hence $Z$ has a subquotient isomorphic to $S$. But
$\Gamma_a(S)\neq 0$. 
Then by the injectivity of $I$, there exists a homomorphism
$\alpha:\Phi(Y)\to I$ such that $\eta'=\alpha\circ\Phi(\varphi)$.
After applying $\Gamma_a$ to the latter identity we obtain
$\eta=\beta\circ\varphi$ with $\beta=\Gamma_a(\alpha)$.
\end{proof}

Note that if  $\nu\in\mathbb C^{n+1}$, then the restriction of $\Gamma_a$ to 
$(\mathcal D, \mathcal H)_{\nu}$-mod is not identically zero
if and only if $|\nu|-a\in \mathbb Z$. 

\begin{lemma}\label{twistednonint} Let $\nu\notin \mathbb Z^{n+1}$
and $a-|\nu|\in \mathbb Z$. Then for any simple $S$ in $(\mathcal D, \mathcal H)_{\nu}$-mod 
$\Gamma_a(S)\neq 0$.
\end{lemma}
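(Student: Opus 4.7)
The plan is to show directly that the generalized $a$-eigenspace of $E$ on $S$ is nonzero by finding a weight $\mu \in \operatorname{supp} S$ with $|\mu| = a$. Since $S$ is a weight module (in fact, every $S_\nu(J)$ has multiplicity one on its support), any such $\mu$ gives $S^\mu \subseteq \operatorname{Ker}(E-a)\subseteq\Gamma_a(S)$.

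First I would invoke Lemma \ref{simpleD} to write $S \simeq \mathcal S_\nu(J) = S_0 \otimes \cdots \otimes S_n$ for some $J \in \mathcal P(\nu)$, and read off the support explicitly: $\mu \in \operatorname{supp} S$ if and only if $\mu_i \in \nu_i + \mathbb Z$ for $i \notin \mathrm{Int}(\nu)$, $\mu_i \in \mathbb Z_{\geq 0}$ for $i \in \mathrm{Int}(\nu) \setminus J$, and $\mu_i \in \mathbb Z_{<0}$ for $i \in J$. Since $\nu \notin \mathbb Z^{n+1}$, pick an index $i_0 \notin \mathrm{Int}(\nu)$; here the constraint on $\mu_{i_0}$ is only that $\mu_{i_0} \in \nu_{i_0} + \mathbb Z$, with no sign restriction, so $\mu_{i_0}$ can be made arbitrarily large or arbitrarily negative.

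For the remaining coordinates $j \neq i_0$ choose any fixed admissible value $\mu_j$ (for example $\mu_j = \nu_j$ when $j \notin \mathrm{Int}(\nu)$, $\mu_j = 0$ when $j \in \mathrm{Int}(\nu)\setminus J$, and $\mu_j = -1$ when $j \in J$). Set $\mu_{i_0} := a - \sum_{j \neq i_0} \mu_j$. Then $|\mu| = a$ by construction, and it remains only to check that this $\mu_{i_0}$ lies in $\nu_{i_0} + \mathbb Z$. Writing
$$\mu_{i_0} - \nu_{i_0} = (a - |\nu|) + \sum_{j \neq i_0}(\nu_j - \mu_j),$$
the first term is in $\mathbb Z$ by hypothesis, while each summand on the right is in $\mathbb Z$ because for $j \notin \mathrm{Int}(\nu)$ we chose $\mu_j = \nu_j$ and for $j \in \mathrm{Int}(\nu)$ both $\nu_j$ and $\mu_j$ are integers. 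Hence $\mu_{i_0} \in \nu_{i_0} + \mathbb Z$, so $\mu \in \operatorname{supp} S$ and $S^\mu \neq 0$, giving $\Gamma_a(S) \supseteq S^\mu \neq 0$.

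There is no genuine obstacle here, since the only arithmetic requirement is that some coordinate be free to range over an entire coset of $\mathbb Z$ in $\mathbb C$, which is guaranteed by $\nu \notin \mathbb Z^{n+1}$; the bookkeeping above just verifies that the congruence condition $a - |\nu| \in \mathbb Z$ is exactly what is needed to hit $a$ by the resulting one-parameter family of weight sums.
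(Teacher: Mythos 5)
Your argument is correct and is exactly the paper's argument, just spelled out. The paper cites the explicit formula for $\operatorname{supp}\mathcal S_\nu(J)$ (equation (\ref{support})) and declares the nonemptiness of $\operatorname{supp} S \cap \{|\mu|=a\}$ immediate; you simply carry out the bookkeeping, using the free coordinate $i_0\notin\mathrm{Int}(\nu)$ to hit the value $a$, which is the same observation.
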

\begin{proof} We just have to check that the intersection of 
${\rm supp} S$ with the hyperplane $|\mu|=a$ is not empty.
This is an immediate consequence of
\begin{equation}\label{support}
{\rm supp}(\mathcal S_\nu(J))=\sum_{i\in J}\mathbb
Z_{<0}\varepsilon_i+
\sum_{i\in {\rm Int}(\nu)\setminus J}\mathbb Z_{\geq 0}\varepsilon_i+\sum_{i\notin {\rm Int}(\nu)}(\nu_i+\mathbb Z)\varepsilon_i.
\end{equation}
\end{proof}
\begin{corollary}\label{twistednoninteq}  Let $\nu\notin \mathbb Z^{n+1}$
and $a-|\nu|\in \mathbb Z$. Then $(\mathcal D, \mathcal H)_{\nu}$-mod
and $(\mathcal D^E, \mathcal H)^a_{\nu}$-mod are equivalent.
\end{corollary}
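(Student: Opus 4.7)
The plan is to show that $\Phi$ and $\Gamma_a$ restrict to mutually inverse equivalences between $(\mathcal{D},\mathcal{H})_{\nu}$-mod and $(\mathcal{D}^E,\mathcal{H})^a_{\nu}$-mod. Since $E=t_0\partial_0+\dots+t_n\partial_n$ lies in $\mathcal{H}$, every $(\mathcal{D},\mathcal{H})$-module $N$ decomposes as a direct sum of generalized $E$-eigenspaces, with $N^{(\mu)}$ contributing to the eigenvalue $|\mu|$. Therefore
$$\Gamma_a(N)=\bigoplus_{|\mu|=a} N^{(\mu)}$$
is a direct summand of $N$ as an $\mathcal{H}$-module, which makes $\Gamma_a$ an exact functor. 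Moreover, by Lemma~\ref{twisted1}(a) the unit $\eta:\mathrm{Id}\Rightarrow \Gamma_a\circ\Phi$ is already an isomorphism, so only the counit $\epsilon_N:\Phi(\Gamma_a(N))\to N$ needs attention.

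The main step is the following vanishing claim: if $M\in(\mathcal{D},\mathcal{H})_{\nu}$-mod satisfies $\Gamma_a(M)=0$, then $M=0$. Here the hypothesis $\nu\notin\mathbb{Z}^{n+1}$ is essential. Pick an index with $\nu_i\notin\mathbb{Z}$, say $\nu_0\notin\mathbb{Z}$. Then every $\mu\in\operatorname{supp}(M)$ has $\mu_0\notin\mathbb{Z}$, in particular $\mu_0\neq 0$ and $\mu_0+1\neq 0$. On $M^{(\mu)}$ the operators $t_0\partial_0$ and $\partial_0 t_0=t_0\partial_0+1$ act with the nonzero generalized eigenvalues $\mu_0$ and $\mu_0+1$, hence are invertible. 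This forces both $t_0:M^{(\mu-\varepsilon_0)}\to M^{(\mu)}$ and $\partial_0:M^{(\mu)}\to M^{(\mu-\varepsilon_0)}$ to be bijections. Setting $m=|\mu|-a\in\mathbb{Z}$, an appropriate power of $\partial_0$ (if $m>0$) or $t_0$ (if $m<0$) gives a bijection $M^{(\mu)}\cong M^{(\mu-m\varepsilon_0)}$, and the latter weight lies on the hyperplane $|\,\cdot\,|=a$, hence in $\Gamma_a(M)=0$. Thus $M^{(\mu)}=0$ for every $\mu$, so $M=0$.

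To finish, apply the exact functor $\Gamma_a$ to the short exact sequences arising from $\epsilon_N$, namely $0\to\ker\epsilon_N\to\Phi(\Gamma_a(N))\to\operatorname{im}\epsilon_N\to 0$ and $0\to\operatorname{im}\epsilon_N\to N\to\operatorname{coker}\epsilon_N\to 0$. The triangle identity $\Gamma_a(\epsilon_N)\circ\eta_{\Gamma_a(N)}=\mathrm{id}$, together with the fact that $\eta_{\Gamma_a(N)}$ is an isomorphism, shows that $\Gamma_a(\epsilon_N)$ is an isomorphism, whence $\Gamma_a(\ker\epsilon_N)=\Gamma_a(\operatorname{coker}\epsilon_N)=0$. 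The vanishing claim of the previous paragraph then yields $\ker\epsilon_N=\operatorname{coker}\epsilon_N=0$, so $\epsilon_N$ is an isomorphism. Combined with Lemma~\ref{twisted1}(a), this shows $\Phi$ and $\Gamma_a$ are mutually inverse equivalences. The only nontrivial point is the vanishing step, where non-integrality of some $\nu_i$ is used crucially to transport any weight to the slice $|\mu|=a$ via bijective actions of $t_i$ and $\partial_i$.
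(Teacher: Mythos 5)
Your proof is correct and takes a genuinely different route from the paper's. The paper's published argument invokes Lemma~\ref{twistednonint}, which establishes $\Gamma_a(S)\neq 0$ for every simple $S$ in $(\mathcal D,\mathcal H)_\nu$-mod by inspecting the explicit support formula \eqref{support}, and then asserts tersely that this makes $\Phi$ and $\Gamma_a$ inverse. You instead prove the stronger conservativity statement that $\Gamma_a(M)=0$ forces $M=0$ for an arbitrary $M$ in the block, via a direct weight-shifting argument: since $\nu_0\notin\mathbb Z$, the operators $t_0,\partial_0$ are bijections between adjacent generalized weight spaces, so every $M^{(\mu)}$ is carried isomorphically onto a weight space on the slice $|\cdot|=a$. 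You then spell out exactly how conservativity, exactness of $\Gamma_a$ (clear since $E\in\mathcal H$, so $\Gamma_a$ is projection onto an $\mathcal H$-direct summand), and the triangle identity force the counit of the adjunction to be an isomorphism. This buys you two things over the paper's route: you avoid relying on the classification of simples and on \eqref{support}, and you sidestep the subtlety that objects of $(\mathcal D,\mathcal H)_\nu$-mod need not have finite length, which the paper's reduction to simple subquotients implicitly requires one to address. The only small point you glide past is the identification of the natural isomorphism in Lemma~\ref{twisted1}(a) with the unit of the adjunction, but the proof of that lemma (the unit is $x\mapsto 1\otimes x$, landing in the degree-zero summand) establishes exactly this, so your argument goes through.
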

\begin{proof} From the previous lemma we have that $\Phi$ and
$\Gamma_a$ are inverse.
\end{proof}
\subsection{ On the structure of $ (\mathcal D^E, \mathcal H)^a_{0}$-mod} \label{subsec-deha}
We now consider the case  $\nu\in  \mathbb Z^{n+1}$. Without loss
of generality we may assume $\nu=0$.
\begin{lemma}\label{twistedint} Let $a\in\mathbb Z$.

(a) Let  $-n-1<a<0$ and  $\Gamma_a(\mathcal S_0(J))=0$ if and only if 
$J=\emptyset$ or $J=\{0,\dots,n+1\}$.

(b) If $a\geq 0$ and  $\Gamma_a(\mathcal S_0(J))=0$  if and only if 
$J=\{0,\dots,n+1\}$.

(c) If $a\leq -n-1$ and  $\Gamma_a(\mathcal S_0(J))=0$  if and only if 
$J=\emptyset$. 
\end{lemma}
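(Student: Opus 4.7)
The plan is to translate the question into an elementary condition on supports. Since $\mathcal S_0(J)$ is a \emph{simple} weight module in $(\mathcal D,\mathcal H)_0$-mod (not merely generalized weight), every weight space is a joint eigenspace for $\mathcal H$, and the Euler element $E=\sum_{i=0}^n t_i\partial_i$ acts on the weight space of weight $\mu$ by the scalar $|\mu|=\mu_0+\cdots+\mu_n$. Consequently
$$\Gamma_a(\mathcal S_0(J))=\bigoplus_{|\mu|=a}\mathcal S_0(J)^{\mu},$$
and the question reduces to whether the affine hyperplane $\{|\mu|=a\}$ meets $\mathrm{supp}(\mathcal S_0(J))$.

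Next I would apply the support formula (\ref{support}) specialized to $\nu=0$, in which case $\mathrm{Int}(\nu)=\{0,\dots,n\}$ and the formula reads
$$\mathrm{supp}(\mathcal S_0(J))=\bigl\{\mu\in\mathbb Z^{n+1}:\mu_i\le -1\text{ for }i\in J,\ \mu_j\ge 0\text{ for }j\notin J\bigr\}.$$
Splitting $|\mu|$ as $\sum_{i\in J}\mu_i+\sum_{j\notin J}\mu_j$ and reading off the ranges, the first sum takes every integer value $\le -|J|$ whenever $J\neq\emptyset$, and the second takes every integer value $\ge 0$ whenever $J$ is a proper subset of $\{0,\dots,n\}$. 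Hence the set of values of $|\mu|$ on $\mathrm{supp}(\mathcal S_0(J))$ is $\mathbb Z_{\ge 0}$ when $J=\emptyset$, $\mathbb Z_{\le -(n+1)}$ when $J$ is the full index set $\{0,\dots,n\}$, and all of $\mathbb Z$ in every other case.

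From this trichotomy each of the three cases follows by direct comparison with the stated ranges of $a$: in (a) the interval $-n-1<a<0$ excludes both $\mathbb Z_{\ge 0}$ and $\mathbb Z_{\le -(n+1)}$ but meets $\mathbb Z$, so the vanishing locus is exactly $\{\emptyset, \{0,\dots,n\}\}$; in (b) $a\ge 0$ is hit by $\mathbb Z_{\ge 0}$ and $\mathbb Z$ but missed by $\mathbb Z_{\le -(n+1)}$, giving only $J=\{0,\dots,n\}$; and in (c) $a\le -n-1$ is missed only by $\mathbb Z_{\ge 0}$, giving only $J=\emptyset$.

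There is no serious obstacle: the argument is entirely combinatorial once the reduction via $E$-eigenvalues is made. The only minor point of care is notational, namely that the set written ``$\{0,\dots,n+1\}$'' in the statement must be interpreted as the full index set $\{0,\dots,n\}$ of the $n+1$ tensor factors.
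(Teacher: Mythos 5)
Your proof is correct and takes the same route as the paper: reduce to checking whether the hyperplane $\{|\mu|=a\}$ meets $\mathrm{supp}(\mathcal S_0(J))$, then apply the support formula (\ref{support}) with $\nu=0$ and inspect the three cases. You also correctly identify the typo $\{0,\dots,n+1\}$ for $\{0,\dots,n\}$ in the statement.
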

\begin{proof}  Like the proof of Lemma \ref{twistednonint}, the statements follow by direct inspection using (\ref{support}).
\end{proof}

Consider the automorphism  $\tau=\prod_{i=0}^n\sigma_i$ of $\mathcal D$. It is clear that 
$\tau(E)=-E-n-1$. Hence $\tau(\mathcal D^E)=\mathcal D^E$ and twist by
$\tau$ induces an equivalence between $(\mathcal D^E,\mathcal H)_a$-mod and 
$(\mathcal D^E,\mathcal H)_{-n-1-a}$-mod. In particular,  $ (\mathcal D^E, \mathcal H)^a_{0}$-mod 
and  $ (\mathcal D^E, \mathcal H)^{-n-a-1}_{0}$-mod are equivalent. Thus, without
loss of generality, we can
restrict our attention to the cases (a) and (b) of the last lemma.

If $a\geq 0$, then all up to isomorphism simple
modules of   $ (\mathcal D^E, \mathcal H)^a_{0}$-mod are of the form
$\mathcal S^a_0(J)=\Gamma_a(\mathcal S_0(J))$, where $J$ runs through all  proper  
subsets of $\{0,\dots,n\}$. If  $-n-1<a<0$, then  
all up to isomorphism simple
modules of   $ (\mathcal D^E, \mathcal H)^a_{0}$-mod are of the form
$\mathcal S^a_0(J)=\Gamma_a(\mathcal S_0(J))$, where $J$ runs through all  proper non-empty subsets of 
$\{0,\dots,n\}$. In what follows we denote by
$\mathcal P_a$ the collection of all proper subsets of $\{0,\dots,n\}$
for $a\geq 0$, and that of all proper non-empty subsets of $\{0,\dots,n\}$
for $-n-1<a<0$.

Set
$$\mathcal R^a_0=\bigoplus_{J\in\mathcal P_a} \Gamma_a(\mathcal I_0(J)).$$
Lemma \ref{twisted1} implies the following.
\begin{corollary}\label{injco} $\mathcal R^a_0$ is an injective
cogenerator in  $ (\mathcal D^E, \mathcal H)^a_{0}$-mod.
\end{corollary}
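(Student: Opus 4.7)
The plan is to reduce the claim to the already-established fact that $\mathcal R_0$ is an injective cogenerator in $(\mathcal D,\mathcal H)_0$-mod (Lemma \ref{inj4}(b)), propagating this structure through the exact functor $\Gamma_a$. The two things to check are injectivity of each summand and the cogenerator property; both should be direct consequences of Lemma \ref{twisted1} combined with Lemma \ref{twistedint}.

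For injectivity, I would argue summand-by-summand. By the definition of $\mathcal P_a$, combined with Lemma \ref{twistedint}, every $J \in \mathcal P_a$ satisfies $\Gamma_a(\mathcal S_0(J)) \neq 0$. Since $\mathcal S_0(J)$ is the unique simple submodule of $\mathcal I_0(J)$ by Lemma \ref{inj4}(a), Lemma \ref{twisted1}(e) then gives that $\Gamma_a(\mathcal I_0(J))$ is injective in $(\mathcal D^E,\mathcal H)^a$-mod. A finite direct sum of injectives is injective, so $\mathcal R^a_0$ is injective.

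For the cogenerator property, I need to produce an embedding of every simple of the block into $\mathcal R^a_0$. By Lemma \ref{twisted1}(d), every simple $M$ of $(\mathcal D^E,\mathcal H)^a_0$-mod is isomorphic to $\Gamma_a(S)$ for some simple $S$ in $(\mathcal D,\mathcal H)$-mod with $\Gamma_a(S) \neq 0$; the integrality of the support forces $S$ to lie in $(\mathcal D,\mathcal H)_0$-mod, so Lemma \ref{simpleD} identifies $S \cong \mathcal S_0(J)$ for some $J$, and the nonvanishing of $\Gamma_a(S)$ (via Lemma \ref{twistedint}) forces $J \in \mathcal P_a$. Since $\mathcal S_0(J) \hookrightarrow \mathcal I_0(J)$ by Lemma \ref{inj4}(a), applying the exact functor $\Gamma_a$ yields the required embedding $M \cong \Gamma_a(\mathcal S_0(J)) \hookrightarrow \Gamma_a(\mathcal I_0(J)) \subseteq \mathcal R^a_0$.

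There is no real obstacle here; the only point requiring a moment of care is the consistent matching between the index sets appearing in the three relevant lemmas. Lemma \ref{twistedint} and the definition of $\mathcal P_a$ are arranged precisely so that the set of $J$ for which $\Gamma_a(\mathcal I_0(J))$ is a valid injective hull of a simple of the block coincides with $\mathcal P_a$, so no case analysis on the sign or range of $a$ is needed once this bookkeeping is checked.
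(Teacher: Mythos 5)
Your proof is correct and follows essentially the same route the paper intends: the paper simply states that Lemma \ref{twisted1} implies the corollary, and your write-up is the natural unpacking of that one-line argument, using \ref{twisted1}(e) with \ref{inj4} for injectivity of the summands and \ref{twisted1}(d) together with \ref{simpleD} and \ref{twistedint} for the cogenerator property. The bookkeeping you highlight (matching the index set $\mathcal P_a$ to the nonvanishing locus of $\Gamma_a$ on simples) is exactly the point the definition of $\mathcal P_a$ was arranged to take care of.
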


In view of Theorem \ref{antieq}, it is useful to study the algebra
$\operatorname{End}_{\mathcal D^E}(\mathcal R^a_0)$.
Let $\pi^a\in \mathcal A_0$ denote the projector onto
$\bigoplus_{J\in\mathcal P_a} \mathcal I_0(J)$.

\begin{lemma}\label{idempotent} 
$\operatorname{End}_{\mathcal D^E}(\mathcal R^a_0)$ is isomorphic to
$\pi^a\mathcal A_0\pi^a$.
\end{lemma}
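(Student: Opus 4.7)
First I would identify $\pi^a\mathcal R_0=\bigoplus_{J\in\mathcal P_a}\mathcal I_0(J)$, so that $\pi^a\mathcal A_0\pi^a=\operatorname{End}_{\mathcal D}(\pi^a\mathcal R_0)$ and $\mathcal R^a_0=\Gamma_a(\pi^a\mathcal R_0)$. Since every $\mathcal D$-endomorphism of $\pi^a\mathcal R_0$ commutes with $E$ and therefore preserves the generalized $(E-a)$-eigenspace, restriction yields a natural algebra homomorphism
\[
\Psi:\pi^a\mathcal A_0\pi^a\longrightarrow\operatorname{End}_{\mathcal D^E}(\mathcal R^a_0),\qquad f\mapsto f|_{\mathcal R^a_0},
\]
and the goal is to prove $\Psi$ is an isomorphism.

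Using the adjunction $\operatorname{Hom}_{\mathcal D^E}(X,\Gamma_a Y)\simeq\operatorname{Hom}_{\mathcal D}(\Phi X,Y)$ from Section \ref{subsec-de}, one rewrites
\[
\operatorname{End}_{\mathcal D^E}(\mathcal R^a_0)\simeq\operatorname{Hom}_{\mathcal D}(\Phi\Gamma_a(\pi^a\mathcal R_0),\pi^a\mathcal R_0),
\]
and under this identification $\Psi$ becomes precomposition with the adjunction counit $c:\Phi\Gamma_a(\pi^a\mathcal R_0)\to\pi^a\mathcal R_0$. Because $\pi^a\mathcal R_0$ is injective in $(\mathcal D,\mathcal H)$-mod, being a finite direct sum of indecomposable injectives (Lemma \ref{inj4}), applying the exact functor $\operatorname{Hom}_{\mathcal D}(-,\pi^a\mathcal R_0)$ to the four-term exact sequence $0\to\ker c\to\Phi\Gamma_a(\pi^a\mathcal R_0)\to\pi^a\mathcal R_0\to\operatorname{coker} c\to 0$ reduces the claim that $\Psi$ is an isomorphism to the two vanishing statements $\operatorname{coker} c=0$ and $\operatorname{Hom}_{\mathcal D}(\ker c,\pi^a\mathcal R_0)=0$.

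To see that $\operatorname{coker} c=0$, I would verify that $\mathcal I_0(J)$ is generated as a $\mathcal D$-module by $\Gamma_a(\mathcal I_0(J))$ for each $J\in\mathcal P_a$: using the decomposition $\mathcal I_0(J)=\bigotimes_i I_i$, any weight of $\mathcal I_0(J)$ can be reached from the slice $|\mu|=a$ by a monomial in the $t_i$'s and $\partial_j$'s, because on each factor $\mathcal F_0^{\rm log}$ or its $\sigma$-twist the raising operator $t_i$ is an isomorphism between neighbouring weight spaces and $\partial_i$ is surjective. For the Hom vanishing, Lemma \ref{twisted1}(a) shows that $\Gamma_a(c)$ is an isomorphism, so by exactness of $\Gamma_a$ we have $\Gamma_a(\ker c)=0$, and consequently every simple subquotient of $\ker c$ is of the form $\mathcal S_0(J)$ with $J\notin\mathcal P_a$. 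On the other hand, the socle of $\pi^a\mathcal R_0$ is the essential submodule $\bigoplus_{J\in\mathcal P_a}\mathcal S_0(J)$, so any nonzero $\mathcal D$-map $\ker c\to\pi^a\mathcal R_0$ would force a simple $\mathcal S_0(J)$ with $J\in\mathcal P_a$ to appear as a subquotient of $\ker c$, yielding the required contradiction. The main obstacle will be the generation step, which demands a careful weight-by-weight analysis of the $\mathcal D$-action on the infinite-dimensional modules $\mathcal I_0(J)$.
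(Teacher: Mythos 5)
Your proof is correct, but it runs along a genuinely different track from the paper's. Both start by identifying $\pi^a\mathcal A_0\pi^a$ with $\operatorname{End}_{\mathcal D}\bigl(\bigoplus_{J\in\mathcal P_a}\mathcal I_0(J)\bigr)$ and comparing via $g\mapsto\Gamma_a(g)$ (your $\Psi$ is the paper's $\gamma_a$). You then work with the \emph{left} adjoint $\Phi=\mathcal D\otimes_{\mathcal D^E}(-)$ of $\Gamma_a$: you recast $\Psi$ as precomposition with the counit $c\colon\Phi\Gamma_a(\pi^a\mathcal R_0)\to\pi^a\mathcal R_0$ and, using injectivity of $\pi^a\mathcal R_0$, reduce the lemma to showing $\operatorname{coker}c=0$ (the weight-by-weight generation argument you correctly flag as the main technical obstacle) together with $\operatorname{Hom}_{\mathcal D}(\ker c,\pi^a\mathcal R_0)=0$ (a socle-support argument, via $\Gamma_a(\ker c)=0$ from Lemma \ref{twisted1}(a) and essentiality of the socle $\bigoplus_{J\in\mathcal P_a}\mathcal S_0(J)$). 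The paper instead brings in the \emph{right} adjoint $\Phi'(M)=\Gamma_{\mathcal H}\bigl(\operatorname{Hom}_{\mathcal D^E}(\mathcal D,M)\bigr)$ of $\Gamma_a$: since $\Gamma_a$ is exact, $\Phi'$ sends injectives to injectives, so a short socle computation gives $\Phi'(\mathcal R^a_0)=\bigoplus_{J\in\mathcal P_a}\mathcal I_0(J)$, and then $f=\gamma_a(\Phi'(f))$ delivers surjectivity at a stroke, with no need to analyze the counit. Your approach stays entirely inside the adjunction $(\Phi,\Gamma_a)$ already established in Section \ref{subsec-de}, at the price of the explicit generation step; the paper's approach makes surjectivity formal at the price of introducing and computing a second adjoint. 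Both use essentially the same socle argument, yours for the $\operatorname{Hom}$-vanishing and the paper's for the injectivity of $\gamma_a$.
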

\begin{proof} Observe that 
$$\pi^a\mathcal A_0\pi^a=\operatorname{End}_{\mathcal D}(\bigoplus_{J\in\mathcal P_a} \mathcal I_0(J)).$$ 
We define a homomorphism
$\gamma_a:\pi^a\mathcal A_0\pi^a\to \operatorname{End}_{\mathcal D^E}(\mathcal R^a_0)$ 
by setting $\gamma_a(g)=\Gamma_a(g)$.
The injectivity of $\gamma_a$ follows immediately by definition.
To prove the surjectivity consider $f\in\operatorname{End}_{\mathcal D^E}(\mathcal R^a_0)$.
Note that $\Gamma_a$ has a right adjoint functor $\Phi'$ which has all
properties analogous to 
the left adjoint functor $\Phi$. It can be defined by
$$\Phi'(M)=\Gamma_{\mathcal H}(\operatorname{Hom}_{\mathcal D^E}(\mathcal D,M)).$$
By general nonsense arguments, $\Phi'$ maps an injective module to an injective
module and therefore 
$$\Phi'(\mathcal R^a_0)=\bigoplus_{J\in\mathcal P_a} \mathcal I_0(J).$$
Then we have $f=\gamma_a(\Phi'(f))$.
\end{proof}

In order to give a combinatorial description of  
$^{\rm b} (\mathcal D^E, \mathcal H)^a_{0}$-mod
we study certain quivers. 
By Proposition \ref{equivgen},   $ (\mathcal D, \mathcal H)_{0}$-mod is equivalent to the
category of finite-dimensional nilpotent representations of  $A(n+1)$,
where by ``nilpotent'' we mean that every sufficiently long path acts trivially.

The following statement is now a consequence of Lemma \ref{idempotent}.

\begin{proposition}\label{singularblocks}

(a) If $a\geq 0$, then  $^{\rm b} (\mathcal D^E, \mathcal H)^a_{0}$-mod is
equivalent to the category of nilpotent
representations of $A'(n+1)$.

(b) If  $-n-1<a<0$, then  
$^{\rm b} (\mathcal D^E, \mathcal H)^a_{0}$-mod is
equivalent to the category of  nilpotent
representations of $A''(n+1)$.

\end{proposition}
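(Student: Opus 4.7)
The plan is to combine Theorem \ref{antieq}, Lemma \ref{idempotent}, and the description of $\mathcal A_0$ from the remark preceding the proposition (which identifies $(\mathcal D, \mathcal H)_0$-mod with nilpotent representations of $A(n+1)$) in order to reduce both parts to a purely combinatorial identification of the Peirce subalgebra $\pi^a\mathcal A_0\pi^a$. Concretely, Theorem \ref{antieq} yields an antiequivalence between $^{\rm b}(\mathcal D^E, \mathcal H)^a_0$-mod and finite-dimensional (nilpotent) $\operatorname{End}_{\mathcal D^E}(\mathcal R^a_0)$-modules, and Lemma \ref{idempotent} identifies the latter endomorphism algebra with $\pi^a\mathcal A_0\pi^a$. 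As in the proof of Theorem \ref{eq1}, the antiequivalence upgrades to an equivalence via the natural vector-space duality on finite-dimensional modules over the path algebras in question (both $A'(n+1)$ and $A''(n+1)$ are isomorphic to their opposites thanks to the symmetric color relations). Thus, it suffices to identify $\pi^a\mathcal A_0\pi^a$ with $A'(n+1)$ in case (a) and with $A''(n+1)$ in case (b).

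For the identification itself, recall that the primitive idempotents $e_J\in\mathcal A_0$ correspond to subsets $J\subseteq\{0,\dots,n\}$, viewed as vertices of the $(n+1)$-cube quiver $C(n+1)$, and that $\mathcal A_0$ is spanned by colored paths in $C(n+1)$ modulo the color-permutation relations defining $A(n+1)$. By Lemma \ref{twistedint}, the set $\mathcal P_a$ consists of all vertices except $\{0,\dots,n\}$ when $a\geq 0$, and of all vertices except the two opposite vertices $\emptyset$ and $\{0,\dots,n\}$ when $-n-1<a<0$. Hence $\pi^a\mathcal A_0\pi^a$ has a $\mathbb C$-basis of colored $C(n+1)$-paths with both endpoints in $\mathcal P_a$, subject to the inherited relations. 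The key combinatorial observation is that any length-$2$ excursion $w\to v\to w$ through a removed vertex $v$ adjacent to $w$ via an edge of color $c$ contributes an element of color sequence $(c,c)$ to $\pi^a\mathcal A_0\pi^a$, which matches exactly the loop of color $c$ attached at $w$ in $C'(n+1)$ (resp.\ $C''(n+1)$). More generally, every $C(n+1)$-path with endpoints in $\mathcal P_a$ can be rewritten, modulo color permutations, as a path in the modified quiver in which each excursion through a removed vertex is replaced by the corresponding loop; the color-permutation relations descend verbatim, giving a surjection from $A'(n+1)$ (resp.\ $A''(n+1)$) onto $\pi^a\mathcal A_0\pi^a$.

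The main technical obstacle lies in making this rewriting procedure rigorous, especially for paths that pass through a removed vertex more than once or enter from different adjacent vertices, and in checking that the surjection above has no kernel. Here one needs to verify that the proposed basis of $\pi^a\mathcal A_0\pi^a$ in terms of $C'(n+1)$-paths (resp.\ $C''(n+1)$-paths) modulo the color relations of Section 4 really is a basis, i.e.\ that the presentation captures all relations coming from $A(n+1)$ restricted to $\mathcal P_a$ and nothing more. The color-sequence formalism makes this tractable, since equivalence classes of $A(n+1)$-paths are parametrized by the endpoints and the multiset of colors; restricting to paths with both endpoints in $\mathcal P_a$ and bundling detours through the ghost vertex (or vertices) into loops gives exactly the combinatorial model for $A'(n+1)$ or $A''(n+1)$, completing the identification.
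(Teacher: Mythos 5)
Your proposal matches the paper's approach exactly: the paper states the proposition as an immediate consequence of Lemma \ref{idempotent} together with the identification (via Proposition \ref{equivgen}) of $^{\rm b}(\mathcal D,\mathcal H)_0$-mod with nilpotent $A(n+1)$-representations, leaving the combinatorial identification of the Peirce corner $\pi^a\mathcal A_0\pi^a$ with $A'(n+1)$ or $A''(n+1)$ to the reader. You supply that missing combinatorial step (bases of both sides are parametrized by pairs $(v,S)$ of a vertex in $\mathcal P_a$ and a color multiset $S$, with composition given by concatenation) and also make explicit the self-duality used to pass from antiequivalence to equivalence, which the paper likewise invokes in the proof of Theorem \ref{eq1}.
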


\begin{corollary}\label{wildtwisted} For any $n\geq 1$, the category
$^{\rm b} (\mathcal D^E, \mathcal H)^a_{0}$-mod is wild.
\end{corollary}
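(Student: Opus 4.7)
The plan is to reduce the corollary directly to the two propositions immediately preceding it. Proposition \ref{singularblocks} exhibits $^{\rm b}(\mathcal D^E,\mathcal H)^a_0$-mod as equivalent to a category of nilpotent representations of either $A'(n+1)$ or $A''(n+1)$, and Proposition \ref{wildweight2} tells us that both $A'(k)$ and $A''(k)$ are wild for $k\geq 2$. Since the hypothesis $n\geq 1$ forces $n+1\geq 2$, the wildness will follow once all values of $a$ are covered.

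First, I would dispose of the case $a\geq 0$: by Proposition \ref{singularblocks}(a), $^{\rm b}(\mathcal D^E,\mathcal H)^a_0$-mod is equivalent to the category of nilpotent representations of $A'(n+1)$, and $A'(n+1)$ is wild by Proposition \ref{wildweight2}. Next, for $-n-1<a<0$, I would apply Proposition \ref{singularblocks}(b) together with the wildness of $A''(n+1)$ from Proposition \ref{wildweight2}. Finally, for $a\leq -n-1$, I would invoke the equivalence
\[
(\mathcal D^E,\mathcal H)^a_0\text{-mod}\;\simeq\;(\mathcal D^E,\mathcal H)^{-n-1-a}_0\text{-mod}
\]
induced by the automorphism $\tau=\sigma_0\sigma_1\cdots\sigma_n$ (discussed in Section \ref{subsec-deha} using $\tau(E)=-E-n-1$). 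Since $-n-1-a\geq 0$, this case reduces to the first one.

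The only mildly delicate point, and the one I would pay most attention to, is that Proposition \ref{wildweight2} is stated for the whole path algebras $A'(k)$, $A''(k)$, whereas Proposition \ref{singularblocks} produces nilpotent representations. This is not a real obstacle, however: the wildness in Proposition \ref{wildweight2} is established by exhibiting wild finite-dimensional quotient algebras (such as $A'(2)$ and the quiver $\bullet\to\bullet$ with a loop), and any finite-dimensional module over such a quotient pulls back to a finite-dimensional nilpotent module over the ambient algebra. Thus wildness of the quotient forces wildness of the nilpotent representation category, which is precisely what the equivalences of Proposition \ref{singularblocks} transport to $^{\rm b}(\mathcal D^E,\mathcal H)^a_0$-mod. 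Combining the three cases yields the claim.
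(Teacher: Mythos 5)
Your proof is correct and follows exactly the route the paper intends: the corollary is a direct consequence of Proposition~\ref{singularblocks} and Proposition~\ref{wildweight2}, with the range $a\le -n-1$ handled by the $\tau$-equivalence that the paper sets up just before Proposition~\ref{singularblocks} (reducing to $a\ge 0$). Your remark about passing from wildness of the algebras $A'(n+1)$, $A''(n+1)$ to wildness of their \emph{nilpotent} representation categories is a worthwhile point the paper leaves implicit; the one small imprecision is the phrase ``any finite-dimensional module over such a quotient pulls back to a finite-dimensional nilpotent module,'' which is not literally true when the quotient still contains a loop (e.g.\ the quiver with one loop and one arrow) --- what one actually needs, and what holds, is that the wild family of representations exhibiting wildness can be chosen nilpotent, and those pull back to nilpotent $A'(n+1)$- or $A''(n+1)$-modules.
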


\subsection{On the structure of $ _{\rm s}(\mathcal D^E, \mathcal H)^a_{0}$-mod}
Recall the definition of $_{\rm s}(\mathcal D^E, \mathcal H)$ in Section \ref{subsec-wtde}. We can use the functor $\Gamma_a$ in the same way as in Section \ref{subsec-de}.
\begin{proposition}\label{twistedweight1} Let $\nu\notin \mathbb Z^{n+1}$
and $a-|\nu|\in \mathbb Z$. Then $_{\rm s}(\mathcal D, \mathcal H)_{\nu}$-mod
and $_{\rm s}(\mathcal D^E, \mathcal H)^a_{\nu}$-mod are equivalent.
\end{proposition}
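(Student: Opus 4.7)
The plan is to restrict the equivalence of Corollary \ref{twistednoninteq} to the full subcategories of $\mathcal H'$-semisimple modules. Under the hypotheses $\nu\notin\mathbb Z^{n+1}$ and $a-|\nu|\in\mathbb Z$, we already know that the mutually inverse functors $\Phi$ and $\Gamma_a$ give an equivalence between $(\mathcal D,\mathcal H)_\nu$-mod and $(\mathcal D^E,\mathcal H)^a_\nu$-mod. Thus it suffices to check that both functors preserve $\mathcal H'$-semisimplicity; then the restricted functors are automatically quasi-inverse to each other.

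For the functor $\Gamma_a$ this is immediate: $\Gamma_a(M)=\bigcup_{\ell>0}\operatorname{Ker}(E-a)^\ell$ is a $\mathcal D^E$-submodule of $M$ (in particular an $\mathcal H'$-submodule, since $\mathcal H'\subset\mathcal D^E$), and any submodule of an $\mathcal H'$-semisimple module is $\mathcal H'$-semisimple.

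For $\Phi(X)=\mathcal D\otimes_{\mathcal D^E}X$ we use the root space decomposition $\mathcal D=\bigoplus_{\mu\in\mathbb Z^{n+1}}\mathcal D^\mu$. Assuming $X$ is spanned by $\mathcal H'$-eigenvectors, $\Phi(X)$ is spanned by elementary tensors $d\otimes x$ with $d\in\mathcal D^\mu$ for some $\mu$ and $x$ an $\mathcal H'$-eigenvector with eigenvalues $\lambda_{ij}=(\lambda,\varepsilon_i-\varepsilon_j)$. A direct computation, using $t_i\partial_i-t_j\partial_j\in\mathcal D^E$ to move it across $\otimes$ and the identity $[t_i\partial_i-t_j\partial_j,d]=(\mu_i-\mu_j)d$ for $d\in\mathcal D^\mu$, gives
$$
(t_i\partial_i-t_j\partial_j)(d\otimes x)=(\mu_i-\mu_j+\lambda_{ij})(d\otimes x).
$$
Thus each such $d\otimes x$ is again an $\mathcal H'$-eigenvector, so $\Phi(X)$ is $\mathcal H'$-semisimple.

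Combining these two observations, $\Phi$ and $\Gamma_a$ restrict to functors between ${}_{\rm s}(\mathcal D,\mathcal H)_\nu$-mod and ${}_{\rm s}(\mathcal D^E,\mathcal H)^a_\nu$-mod; since they are inverse to one another on the ambient categories by Corollary \ref{twistednoninteq}, the restrictions are inverse equivalences, proving the proposition. I do not anticipate a serious obstacle here: the only nontrivial point is the commutator computation in Step~2, which is routine. If anything requires care, it is simply to verify that the support condition $\nu\notin\mathbb Z^{n+1}$ is preserved (which is clear since $\Gamma_a$ and $\Phi$ respect the coset decomposition by construction) and that Corollary \ref{twistednoninteq} genuinely supplies the needed natural isomorphisms $\Gamma_a\circ\Phi\simeq\mathrm{id}$ and $\Phi\circ\Gamma_a\simeq\mathrm{id}$ on the full categories.
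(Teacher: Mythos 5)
Your proof is correct and takes essentially the same route as the paper. The paper's own proof of Proposition~\ref{twistedweight1} is a one-line remark that the statement ``follows immediately from the analogue of Lemma~\ref{twisted1},'' i.e.\ that the functors $\Gamma_a$ and $\Phi$ still implement the equivalence in the $\mathcal H'$-semisimple setting. What you have done is make the crux of that remark explicit: you take the already-proved equivalence of Corollary~\ref{twistednoninteq} and verify that both $\Gamma_a$ and $\Phi$ preserve the full subcategory of $\mathcal H'$-semisimple objects, from which the restricted equivalence is automatic. The only substantive check is that $\Phi(X)=\mathcal D\otimes_{\mathcal D^E}X$ is again $\mathcal H'$-semisimple, and your computation is right: since $t_i\partial_i-t_j\partial_j\in\mathcal D^E$ it slides across the tensor sign, and the commutator term $[t_i\partial_i-t_j\partial_j,d]=(\mu_i-\mu_j)d$ for $d\in\mathcal D^\mu$ shows each $d\otimes x$ is an $\mathcal H'$-eigenvector; being spanned by $\mathcal H'$-eigenvectors is equivalent to $\mathcal H'$-semisimplicity here because $\mathcal H'$ is commutative. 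The direction for $\Gamma_a$ is immediate, as you note, since $\Gamma_a(M)$ is a $\mathcal D^E$-submodule and $\mathcal H'\subset\mathcal D^E$. In short, you have supplied the detail the paper leaves to the reader, and the argument is sound.
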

\begin{proof} The statement follows immediately from the analogue of
Lemma \ref{twisted1}.
\end{proof}
Now we concentrate on the case   $\nu\in \mathbb Z^{n+1}$ and, as
before,  assume that $\nu=0$.
We apply the same arguments as in Section \ref{subsec-de}. We denote the idempotent of $\mathcal B(n+1)$ again by $\pi^a$. Recall that  by Proposition 
\ref{equivgenweight},
$^{\rm b}_{\rm s}(\mathcal D^E, \mathcal H)^a_{0}$-mod is equivalent
to the category of nilpotent $\mathcal B(n+1)$-modules. With this in mind, we have the following.

\begin{lemma}\label{twistedweight2}
The category $^{\rm b}_{\rm s}(\mathcal D^E, \mathcal H)^a_{0}$-mod is equivalent
to the category of nilpotent $\pi^a\mathcal B(n+1)\pi^a$-modules.
\end{lemma}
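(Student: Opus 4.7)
The plan is to mimic the proof scheme developed for Proposition \ref{singularblocks} and Lemma \ref{idempotent}, but with the weight-version injective cogenerator $S_{\mathcal H'}(\mathcal R_0)$ in place of $\mathcal R_0$ and the algebra $\mathcal B(n+1)$ in place of $\mathcal A_0$. First, I would check that the functor $\Gamma_a$ restricts well: since $[E,h]=0$ for every $h\in\mathcal H'$, the generalized $E$-eigenspace of a module is an $\mathcal H'$-submodule, so $\Gamma_a$ sends $_{\rm s}(\mathcal D,\mathcal H)_0$-mod into $_{\rm s}(\mathcal D^E,\mathcal H)^a_0$-mod. The five-part statement of Lemma \ref{twisted1} then transfers verbatim to this setting: $\Gamma_a\circ\Phi_{\mathrm s}\simeq\mathrm{Id}$ (where $\Phi_{\mathrm s}=S_{\mathcal H'}\circ\Phi$), $\Gamma_a$ kills or preserves simples, non-isomorphic simples map to non-isomorphic simples, every simple in $_{\rm s}(\mathcal D^E,\mathcal H)^a_0$-mod is in its image, and it sends injective hulls of surviving simples to injectives.

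Next, by Proposition \ref{equivgenweight} (with $k=n+1$) and the definition of $\mathcal B_0=\mathcal B(n+1)$, the module $S_{\mathcal H'}(\mathcal R_0)$ is an injective cogenerator of $_{\rm s}(\mathcal D,\mathcal H)_0$-mod with endomorphism algebra $\mathcal B(n+1)$. The simples in this category are the $\mathcal S_0(J)$, $J\subseteq\{0,\dots,n\}$, and by the same support computation (\ref{support}) used in Lemma \ref{twistedint}, $\Gamma_a(\mathcal S_0(J))\neq 0$ exactly when $J\in\mathcal P_a$. Consequently
$$
S_{\mathcal H'}(\mathcal R^a_0):=\Gamma_a\Bigl(\pi^a\cdot S_{\mathcal H'}(\mathcal R_0)\Bigr)=\bigoplus_{J\in\mathcal P_a}\Gamma_a\bigl(S_{\mathcal H'}(\mathcal I_0(J))\bigr)
$$
is an injective cogenerator of $_{\rm s}(\mathcal D^E,\mathcal H)^a_0$-mod, with one indecomposable summand for each isomorphism class of simple object.

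Then I would compute its endomorphism algebra. Following Lemma \ref{idempotent} verbatim, the right adjoint $\Phi'$ of $\Gamma_a$ (restricted to the $\mathcal H'$-semisimple setting) sends $S_{\mathcal H'}(\mathcal R^a_0)$ back to $\pi^a\cdot S_{\mathcal H'}(\mathcal R_0)$, so the map $g\mapsto\Gamma_a(g)$ gives an isomorphism
$$
\pi^a\,\mathcal B(n+1)\,\pi^a=\operatorname{End}_{\mathcal D}\Bigl(\bigoplus_{J\in\mathcal P_a}S_{\mathcal H'}(\mathcal I_0(J))\Bigr)\xrightarrow{\;\sim\;}\operatorname{End}_{\mathcal D^E}\bigl(S_{\mathcal H'}(\mathcal R^a_0)\bigr).
$$
Injectivity is immediate from the $\Gamma_a\Phi'\simeq\mathrm{Id}$ formula; surjectivity follows because for any $f$ on the target, $\Phi'(f)$ is a preimage.

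Finally, applying Theorem \ref{antieq} (whose proof works in the $\mathcal H'$-semisimple setting without change, since $S_{\mathcal H'}(\mathcal R^a_0)$ is an injective cogenerator of finite length on each weight space) yields an antiequivalence between $^{\rm b}_{\rm s}(\mathcal D^E,\mathcal H)^a_0$-mod and finite-dimensional $\pi^a\mathcal B(n+1)\pi^a$-modules; the nilpotency of the action is automatic because $\pi^a\mathcal B(n+1)\pi^a$ is pro-finite (its augmentation ideal is generated by the nilpotent $\theta_i$). The duality on finite-dimensional graded modules over a local graded algebra then converts the antiequivalence to an equivalence, as in the proofs of Theorem \ref{eq1} and Proposition \ref{singularblocks}. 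The only point that needs verification beyond routine transcription is the identification of the restriction $\pi^a\cdot S_{\mathcal H'}(\mathcal R_0)$, which follows because the projection $\mathcal A_0\twoheadrightarrow\mathcal B(n+1)$ used in Section \ref{subsec-d-weight} to define $\mathcal B_0$ carries the idempotent $\pi^a\in\mathcal A_0$ to the idempotent of the same name in $\mathcal B(n+1)$, by construction.
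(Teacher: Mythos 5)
Your proposal is correct and follows exactly the route the paper intends (but leaves implicit, with the single phrase ``We apply the same arguments as in Section~\ref{subsec-de}''): restrict $\Gamma_a$ to the $\mathcal H'$-semisimple setting, observe that the analogue of Lemma~\ref{twisted1} holds there, identify $\bigoplus_{J\in\mathcal P_a}\Gamma_a(S_{\mathcal H'}(\mathcal I_0(J)))$ as the injective cogenerator, compute its endomorphism ring via the idempotent argument of Lemma~\ref{idempotent}, and finish with Theorem~\ref{antieq} plus duality. You have simply written out what the paper compresses into one sentence.
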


We now formulate the above result in terms of quivers.

\begin{proposition}\label{singularblocksweights}

(a) If $a\geq 0$, then  $_{\rm s}^{\rm b} (\mathcal D^E, \mathcal H)^a_{0}$-mod is
equivalent to the category of nilpotent
representations of $B'(n+1)$. Therefore, $_{\rm s}^{\rm b} (\mathcal D^E, \mathcal H)^a_{0}$-mod is tame if and only if $n=1$. 

(b) If $-n-1<a<0$, then  
$_{\rm s}^{\rm b} (\mathcal D^E, \mathcal H)^a_{0}$-mod is
equivalent to the category of  nilpotent
representations of $B''(n+1)$. Therefore, $_{\rm s}^{\rm b} (\mathcal D^E, \mathcal H)^a_{0}$-mod is tame if and only if $n=1$.
\end{proposition}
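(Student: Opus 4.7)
The plan is to combine Lemma~\ref{twistedweight2} with the quiver description of $\mathcal B(n+1)$ coming from Proposition~\ref{equivgenweight} and the renormalization used in the proof of Proposition~\ref{koszul}. By Lemma~\ref{twistedweight2}, the category $_{\rm s}^{\rm b}(\mathcal D^E,\mathcal H)^a_{0}$-mod is equivalent to the category of nilpotent $\pi^a\mathcal B(n+1)\pi^a$-modules, so the task reduces to identifying $\pi^a\mathcal B(n+1)\pi^a$ with $B'(n+1)$ in case (a) and with $B''(n+1)$ in case (b). The tameness/wildness statements will then follow immediately from parts (b) and (c) of Proposition~\ref{wildweight1}.

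First I will translate $\mathcal B(n+1)$, together with its primitive orthogonal idempotents, into quiver language. By Lemma~\ref{weightalgebra} and the argument of Proposition~\ref{equivgenweight}, the category of nilpotent $G_{n+1}$-graded $\mathcal B(n+1)$-modules is equivalent to the category of nilpotent representations of a quiver whose vertices are indexed by $G_{n+1}\simeq\mathbb Z_2^{n+1}$, with color-$i$ arrows corresponding to the generators $\theta_i$. After the sign renormalization used in the proof of Proposition~\ref{koszul}, the relations $\theta_i\theta_j=\theta_j\theta_i$ and $\theta_i^2=\theta_j^2$ translate precisely into the path relation defining $B(n+1)$: any two paths with the same endpoints and the same length are identified. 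Under this identification, the primitive idempotent at a vertex $J\subset\{0,\dots,n\}$ is
\[
e_J=\prod_{i\in J}e_i^{-}\cdot\prod_{i\notin J}e_i^{+}.
\]

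Second, using Lemma~\ref{twistedint} and the description of $\mathcal P_a$ at the end of Subsection~\ref{subsec-deha}, I pin down $\pi^a$ as the sum of vertex idempotents of the surviving simples. For $a\geq 0$ this gives $\pi^a=1-e_{\{0,\dots,n\}}$, so the idempotent truncation by $\pi^a$ deletes the single ``top'' vertex of the hypercube; for $-n-1<a<0$ it gives $\pi^a=1-e_\emptyset-e_{\{0,\dots,n\}}$, which deletes the two opposite vertices. Comparing with the definitions in Subsection~\ref{subsec-def-quivers}, the truncated algebras are then $B'(n+1)$ and $B''(n+1)$ respectively.

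The main obstacle is verifying that the idempotent truncation yields exactly the path algebras of the sub-quivers as defined in Subsection~\ref{subsec-def-quivers}, not merely a Morita-equivalent quotient. The subtlety is that $\pi^a\mathcal B(n+1)\pi^a$ a priori retains every composite corresponding to a path in $C(n+1)$ that starts and ends in a surviving vertex but passes through a removed one, whereas $B'(n+1)$ and $B''(n+1)$ are defined using only paths entirely supported on the sub-quiver. The defining relation of $B(n+1)$---paths of equal length with the same endpoints are equal---ensures that every such ``forbidden'' path is equivalent to one avoiding the removed vertex whenever an alternative path exists; for $n+1\geq 2$ this is immediate in case (a) since the hypercube with one vertex removed remains sufficiently connected. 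The genuinely delicate case is part (b) with $n=1$, where $C(2)$ with its two opposite vertices removed becomes disconnected; here the ad hoc definition of $B''(2)$ in Subsection~\ref{subsec-def-quivers} is crafted precisely so that its two generating arrows play the role of the length-two composites through the removed vertices, and the idempotent truncation identification still goes through. With the algebra isomorphism in place, parts (b) and (c) of Proposition~\ref{wildweight1} supply the tameness dichotomy.
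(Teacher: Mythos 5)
Your proposal is correct and follows the route the paper leaves implicit: translate Lemma~\ref{twistedweight2} into quiver language via Proposition~\ref{equivgenweight}, identify $\pi^a$ as the sum of vertex idempotents indexed by $\mathcal P_a$ (one removed vertex for $a\geq 0$, two opposite vertices for $-n-1<a<0$), conclude $\pi^a\mathcal B(n+1)\pi^a\simeq B'(n+1)$ or $B''(n+1)$, and then invoke Proposition~\ref{wildweight1}(b),(c). Two small remarks: the appeal to the sign renormalization from Proposition~\ref{koszul} is unnecessary here --- the correspondence between $G_{n+1}$-graded $\mathcal B(n+1)$-modules and nilpotent $B(n+1)$-representations already holds without it, since commutation of the $\theta_i$ together with $\theta_i^2=\theta_j^2$ is precisely the ``equal length, equal endpoints'' path relation; the renormalization is only needed for the Koszul duality statement. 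Your explicit treatment of the subtlety that $\pi^a\mathcal B(n+1)\pi^a$ a priori contains composites corresponding to paths through removed vertices, and the check that each such path is identified (by the defining relation) with one supported on the subquiver, is a genuine and worthwhile addition that the paper does not spell out.
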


\section{Twisted Localization}

\subsection{Twisted localization in general setting}\label{subsec-loc-gen} Retain the notation of  \S \ref{subsec-weight}. Namely, ${\mathcal U}$ is a finitely generated associative unital  algebra, and ${\mathcal H} = {\mathbb C}[h_0,...,h_n] = S({\mathfrak h})$ is a subalgebra of ${\mathcal U}$ such that ${\rm ad} (h)$ is semisimple on ${\mathcal U}$ for every $h \in {\mathfrak h}$. Let now $F = \{ f_1,...,f_k \}$ be a subset of commuting elements of ${\mathcal U}$ such that 
$\mbox{ad}\, (f_i)$ are locally nilpotent endomorphisms of ${\mathcal U}$. Let 
$\langle F \rangle$ be the multiplicative subset  of ${\mathcal U}$ generated by $\{f_1,...,f_k\}$, i.e. the $\langle F \rangle$  consists of the elements $f_1^{k_1}...f_k^{n_k}$ for $n_i \in {\mathbb Z}_{\geq 0}$. By $D_{F} {\mathcal U}$ we denote the localization of ${\mathcal U}$ relative to $\langle F \rangle$. Note that $\langle F \rangle$ satisfy Ore's localizability condition due to the fact that $f_i$ are locally ad-nilpotent. For a ${\mathcal U}$-module $M$, by $D_F M = D_F {\mathcal U}\otimes_{\mathcal U} M$ we denote the localization of $M$ relative to $\langle F \rangle$. We will consider $D_F M$ both as a ${\mathcal U}$-module and a $D_F {\mathcal U}$-module. 
By  $\theta_{F} : M \to {D}_{F} M$ we denote the localization map defined by $\theta_{F} (m) = 1 \otimes m $. Then
$$
\mbox{ann}_M F:= \{ m \in M \; | \; sm = 0 \mbox{ for some }s \in F\}
$$
is a submodule of $M$  (often called, the torsion submodule with respect to $F$). Note that if $\mbox{ann}_M F = 0$, then  $\theta_{F}$ is an injection. In the latter case, $M$ will be considered naturally as a submodule of ${D}_{F} M$.   Note also that if $F = F_1 \cup F_2$, then $D_{F_1}D_{F_2} M  \simeq D_{F_2}D_{F_1} M \simeq D_F M$.

It is well known that ${D}_{F}$ is a functor from the category of ${\mathcal U}$-modules to the category of ${D}_{F} {\mathcal U}$-modules. For any category ${\mathcal A}$ of ${\mathcal U}$-modules, by $ {\mathcal A}_{F}$ we denote the category of ${D}_{F} {\mathcal U}$-modules that considered as ${\mathcal U}$-modules are in ${\mathcal A}$.  Some useful properties of the localization functor ${D}_{F}$ are listed in the following lemma.

\begin{lemma} \label{lem-loc-map}
(i) If $\varphi : M \to N$ is a homomorphism of ${\mathcal U}$-modules, then ${D}_{F} (\varphi) \theta_{F} = \theta_{F} \varphi$.

(ii) ${D}_{F}$ is an exact functor.

(iii) (Universal property) If $N$ is a ${D}_{F} {\mathcal U}$-module and $\varphi : M \to N$ is a homomorphism of ${\mathcal U}$-modules, then there exists a unique homomorphism of ${D}_{F} {\mathcal U}$-modules $\overline{\varphi}: {D}_{F} M \to N$ such that $\overline{\varphi} \theta_{F} = \varphi$. If we identify $N$ with ${D}_{F} N$, then $\overline{\varphi} = {D}_{F}(\varphi)$.

(iv)  Let ${\mathcal A}$ be any category of $U$-modules. If $I$ is an injective module in ${\mathcal A}_{F}$, then $I$ (considered as an ${\mathcal U}$-module) is injective in ${\mathcal A}$ as well.
\end{lemma}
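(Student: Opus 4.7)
The lemma collects four standard properties of non-commutative Ore localization, so the plan is essentially to verify each one by invoking Ore's theorem and then exploiting adjointness between $D_F$ and restriction.

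For part (i), I would simply unpack the definition $D_F(\varphi)(s^{-1}\otimes m) = s^{-1}\otimes \varphi(m)$ and observe that $\theta_F(m) = 1\otimes m$, so both sides equal $1\otimes\varphi(m)$. For part (ii), I would note that Ore localizability (which is guaranteed here because each $f_i$ acts locally ad-nilpotently on $\mathcal U$, so for any $u\in\mathcal U$ there exists $N$ with $f_i^N u \in \mathcal U f_i$) implies $D_F\mathcal U$ is flat as a right $\mathcal U$-module. Exactness of $D_F = D_F\mathcal U\otimes_{\mathcal U}(-)$ then follows formally. Alternatively, one can argue directly using the explicit description of elements of $D_F M$ as equivalence classes $s^{-1}m$.

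For part (iii), the construction is forced: given $\varphi : M \to N$ with $N$ already a $D_F\mathcal U$-module, define $\overline\varphi(s^{-1}\otimes m) := s^{-1}\varphi(m)$, where the action of $s^{-1}$ on $N$ makes sense because $s \in \langle F\rangle$ acts invertibly on $N$. I would check that this is well-defined (independent of the Ore representative), $D_F\mathcal U$-linear, and that $\overline\varphi \circ \theta_F = \varphi$. Uniqueness is immediate since $\theta_F(M)$ generates $D_F M$ over $D_F\mathcal U$. The final identification $\overline\varphi = D_F(\varphi)$ when $N = D_F N$ is obtained by applying the uniqueness clause to both morphisms, since both satisfy the defining equation with respect to $\theta_F$.

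Part (iv) is the one that actually requires a small idea, but it follows cleanly from (iii). Let $0 \to M' \to M$ be an injection in $\mathcal A$ and let $\eta : M' \to I$ be a morphism in $\mathcal A$. Applying the exact functor $D_F$ from (ii), I get an injection $0 \to D_F M' \to D_F M$ in $\mathcal A_F$. By injectivity of $I$ in $\mathcal A_F$, the morphism $D_F(\eta) : D_F M' \to D_F I = I$ (the latter equality since $I$ is already a $D_F\mathcal U$-module) extends to some $\widetilde\eta : D_F M \to I$. The composition $\widetilde\eta \circ \theta_F : M \to I$ then extends $\eta$, because by part (i) applied to the inclusion $M'\hookrightarrow M$ we have $\widetilde\eta\circ\theta_F|_{M'} = \widetilde\eta\circ D_F(\iota)\circ\theta_F^{M'} = D_F(\eta)\circ\theta_F^{M'} = \eta$, where the last equality uses part (iii) identifying $D_F(\eta)\circ\theta_F^{M'}$ with $\eta$ (since $I = D_F I$). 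The only subtle point to verify is that the extension $\widetilde\eta\circ\theta_F$ lands inside $\mathcal A$ as a $\mathcal U$-morphism, which is automatic since $I \in \mathcal A$ and $\mathcal U$-linearity is preserved under restriction of the $D_F\mathcal U$-action. None of the steps require delicate arguments; the main thing to keep straight is which morphisms live in $\mathcal A$ versus $\mathcal A_F$ and when to invoke the identification $I = D_F I$.
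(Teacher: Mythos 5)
Your proposal is correct and follows essentially the same approach as the paper. Parts (i)--(iii) are standard facts about Ore localization (the paper simply cites McConnell--Robson, whereas you sketch the verifications), and your argument for (iv) — applying the exact functor $D_F$ to the monomorphism, extending $D_F(\eta)$ over $D_F M$ using injectivity in $\mathcal A_F$, and then composing with $\theta_F$ to land back in $\mathcal A$, with parts (i) and (iii) used to check that the composition restricts to $\eta$ — is precisely the paper's proof, differing only in notation (you carefully distinguish $\theta_F^{M'}$ from $\theta_F^M$, which the paper leaves implicit).
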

\begin{proof}
Statements (i)-(iii) are standard (see for example \cite{MR}). For (iv), consider a morphism $\varphi : A \to I$ and a monomorphism  $\psi: A \to B$ in  ${\mathcal A}$. By (ii), ${D}_{F} \psi$ is a monomorphism, and since $I$ is injective in $ {\mathcal A}_{F}$, there is a morphism $\gamma : {D}_{F} A \to I$ such that $\gamma {D}_{F} (\psi) = {D}_{F} (\varphi)$. But then $\alpha : = \gamma \theta_{F}$ is a morphism from $B$ to $I$ and 
$$
\alpha \psi = \gamma \theta_{F}\psi = \gamma {D}_{F} (\psi) \theta_{F} = {D}_{F} (\varphi) \theta_{F} = \varphi.
$$
The second identity follows from (i), while the last one from (iii). \end{proof}

We now introduce the  ``generalized conjugation'' in $D_F {\mathcal U}$ following \S 4 of \cite{M}. For ${\bf x} = (x_1,...,x_k) \in {\mathbb C}^k$ define the automorphism $\Theta_F^{\bf x}$ of 
${D}_{F} {\mathcal U}$ in the following way.  For $u
\in {D}_{F}U$ set
$$
\Theta_F^{\bf x}(u):= \sum\limits_{
i_{1},\dots,i_{k} \geq 0} \binom{x_{1}}{i_{1}} \dots
\binom{x_{k}} {i_{k}} \,\mbox{ad}(f_{1})^{i_{1}}\dots
\mbox{ad}(f_{k})^{i_{k}}(u) \,f_{1}^{-i_{1}}\dots
f_{k}^{-i_{k}},
$$
 where $\binom{x}{i} :=
x(x-1)...(x-i+1)/i!$ for $x \in {\mathbb C}$ and $i \in {\mathbb Z}_{\geq 0}$. Note that for ${\bf x} \in {\mathbb Z}^k$,  we have
$\Theta_{F}^{\bf x}(u) = {\bf f}^{\mathbf x}u {\bf f}^{-\mathbf{x}}$, where ${\bf f}^{\bf x}:=f_{1}^{x_1}...f_{k}^{x_k} $. For a
${D}_{F} {\mathcal U}$-module $N$ by $\Phi^{\bf x}_{F} N$ we denote
the ${D}_{F} {\mathcal U}$-module $N$ twisted by $\Theta_{F}^{\bf x}$.  The action on  $\Phi^{\bf x}_{F} N$ is given by
 $$
u \cdot v^{\bf x} :=
 ( \Theta_{F}^{\bf x}(u)\cdot v)^{\bf x},
$$
where $u \in {D}_{F} {\mathcal U}$, $v \in N$, and $w^{\bf x}$
stands for the element $w$ considered as an element of
$\Phi^{\bf x}_{F} N$.
In the case ${\bf x} \in {\mathbb Z}^k$, there is a natural isomorphism of ${D}_{F} {\mathcal U}$-modules $M \to \Phi_{F}^{\bf x} M$ given by $m \mapsto ({\bf f}^{\bf x} \cdot m)^{\bf x}$ with inverse map defined by $n^{\bf x} \mapsto {\bf f}^{-\bf{x}} \cdot n$.
In view of this isomorphism, for ${\bf x} \in {\mathbb Z}^k$, we will identify $M$ with  $\Phi_{F}^{\bf x}M$, and for any ${\bf x} \in {\mathbb C}^k$ will write ${\bf f}^{\bf x} \cdot m$ (or simply ${\bf f}^{\bf x} m$) for $m^{-\bf{x}}$ whenever $m \in M$. 

The basic properties of the twisting functor $\Phi_{F}^{\bf x}$ on ${D}_{F} {\mathcal U}$-mod are summarized in the following lemma. The proofs are straightforward.

\begin{lemma} \label{lemma-conj} Let $F = \{ f_1,...,f_k\}$ be a set of locally ad-nilpotent commuting elements of ${\mathcal U}$, $M$ and $N$  be ${D}_{F} {\mathcal U}$-modules, $m \in M$, $u \in {\mathcal U}$, and  ${\bf x}, {\bf y} \in {\mathbb C}^k$.

{\rm(i)} $\Theta_{F}^{\bf x} \circ \Theta_{F}^{\bf y}  = \Theta_{F}^{\bf{x+y}}$, in particular
${\bf f}^{\bf x} \cdot ({\bf f}^{\bf y} \cdot m) =
{\bf f}^{\bf{x+y}} \cdot m$;

{\rm(ii)}  $\Phi^{\bf x}_{F} \Phi^{\bf y}_{F} =\Phi^{\bf{x+y}}_{F} $, in particular, $\Phi^{\bf x}_{F}
\Phi^{-\bf{x}}_{F} =\operatorname {Id}$ on the category of $D_F {\mathcal U}$-modules;

{\rm(iii)}   ${\bf f}^{\bf x} \cdot ( u \cdot ({\bf f}^{-{\bf x}} \cdot m) ) = \Theta_{F}^{\bf x}(u) \cdot m$;

{\rm(iv)}  $\Phi^{\bf x}_{F}$ is an exact functor;

{\rm(v)}  $M$ is simple  (respectively, injective) if and
only
 if $\Phi^{\bf x}_{F} M$ is simple (respectively, injective);  
 
{\rm(vi)}  ${\rm Hom}_{\mathcal U}(M,N) = {\rm Hom}_{\mathcal U}(\Phi^{\bf x}_{F} M, \Phi^{\bf x}_{F} N)$.

\end{lemma}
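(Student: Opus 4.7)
The plan is to reduce everything to the algebraic identity asserted in (i) and then read off the remaining parts essentially formally. I would begin by establishing $\Theta_F^{\bf x}\circ\Theta_F^{\bf y}=\Theta_F^{{\bf x}+{\bf y}}$ directly from the defining series. Writing ${\bf i}=(i_1,\dots,i_k)$ and $\mathrm{ad}({\bf f})^{\bf i}=\mathrm{ad}(f_1)^{i_1}\cdots\mathrm{ad}(f_k)^{i_k}$, one expands
\begin{equation*}
\Theta_F^{\bf x}\Theta_F^{\bf y}(u)=\sum_{{\bf i},{\bf j}}\binom{\bf x}{\bf i}\binom{\bf y}{\bf j}\,\mathrm{ad}({\bf f})^{\bf i}\!\left(\mathrm{ad}({\bf f})^{\bf j}(u)\,{\bf f}^{-{\bf j}}\right){\bf f}^{-{\bf i}},
\end{equation*}
uses that the $f_l$ commute so that each $\mathrm{ad}(f_l)$ annihilates every $f_m^{-1}$ (pulling ${\bf f}^{-{\bf j}}$ out past the outer ad-operators), and composes the ad-operators to $\mathrm{ad}({\bf f})^{{\bf i}+{\bf j}}$. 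Collecting terms of fixed multi-index ${\bf m}$, the coefficient is $\sum_{{\bf i}+{\bf j}={\bf m}}\binom{\bf x}{\bf i}\binom{\bf y}{\bf j}=\binom{{\bf x}+{\bf y}}{\bf m}$ by the coordinatewise Vandermonde identity, which rearranges to $\Theta_F^{{\bf x}+{\bf y}}(u)$. The second assertion of (i), ${\bf f}^{\bf x}\cdot({\bf f}^{\bf y}\cdot m)={\bf f}^{{\bf x}+{\bf y}}\cdot m$, is then a consequence of (ii) via the notational convention ${\bf f}^{\bf x}\cdot m=m^{-{\bf x}}$.

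For (ii), the twisted action on $\Phi^{\bf x}_F\Phi^{\bf y}_F M$ sends $u$ to $\Theta^{\bf x}_F\Theta^{\bf y}_F(u)$, which equals $\Theta^{{\bf x}+{\bf y}}_F(u)$ by (i) and thus coincides with the defining action on $\Phi^{{\bf x}+{\bf y}}_F M$; specializing ${\bf y}=-{\bf x}$ recovers the identity functor. For (iii), I would simply unwind: by definition of the twisted action $u\cdot({\bf f}^{-{\bf x}}\cdot m)=u\cdot m^{{\bf x}}=(\Theta^{\bf x}_F(u)\cdot m)^{{\bf x}}$, and the convention ${\bf f}^{\bf x}\cdot v^{{\bf x}}=v$ (essentially (ii) applied to a single element) finishes the computation.

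The remaining parts are formal. Part (iv) is immediate because $\Phi^{\bf x}_F$ leaves the underlying abelian group of every module unchanged and acts as the identity on morphisms at the level of underlying maps, so it commutes with kernels and cokernels. For (v) and (vi), (ii) makes $\Phi^{-{\bf x}}_F$ a two-sided inverse of $\Phi^{\bf x}_F$; hence $\Phi^{\bf x}_F$ is an autoequivalence of the category of $D_F{\mathcal U}$-modules, which preserves simple and injective objects (giving (v)) and induces bijections on Hom sets (giving (vi), via the explicit map $\varphi\mapsto(v^{\bf x}\mapsto\varphi(v)^{\bf x})$). Since $M$ and $N$ are already $D_F{\mathcal U}$-modules, any ${\mathcal U}$-linear map between them is automatically $D_F{\mathcal U}$-linear by the universal property of localization, so no distinction between $\mathrm{Hom}_{\mathcal U}$ and $\mathrm{Hom}_{D_F{\mathcal U}}$ arises.

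The one step requiring genuine verification is the Vandermonde manipulation in (i); one must check that the termwise rearrangements of the defining series for $\Theta^{\bf x}_F$ are legitimate, which follows from the hypothesis that each $\mathrm{ad}(f_i)$ is locally nilpotent on ${\mathcal U}$, ensuring that all relevant sums are finite when evaluated on any fixed element. Everything else is bookkeeping from the definitions.
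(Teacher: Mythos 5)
Your proof is correct. The paper dismisses this lemma with ``The proofs are straightforward'' and provides no argument, so there is no text to compare against; your proposal simply supplies the routine verifications. The one substantive step, the binomial manipulation in (i), is handled correctly: the commutativity of the $f_l$ lets $\mathrm{ad}(f_l)$ annihilate $f_m^{-1}$ so the $\mathbf{f}^{-\mathbf{j}}$ factor passes through the outer $\mathrm{ad}$'s, and the coordinatewise Vandermonde identity then gives the result; local ad-nilpotence guarantees finiteness of all sums (note it extends from $\mathcal U$ to $D_F\mathcal U$ because $\mathrm{ad}(f_i)$ kills the $f_j^{-1}$'s). Parts (ii)--(vi) are then formal consequences exactly as you argue, with (ii) making $\Phi_F^{\mathbf x}$ an autoequivalence and hence giving (iv)--(vi) for free; your observation that $\mathcal U$-linear maps between $D_F\mathcal U$-modules are automatically $D_F\mathcal U$-linear closes the only potential gap in (vi).
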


For any ${\mathcal U}$-module $M$, and ${\bf x} \in {\mathbb C}^k $ we define
the {\it twisted localization ${D}_{F}^{\bf x} M$ of $M$
relative to $F$ and $\bf x$} by ${D}_{F}^{\bf x} M:=
\Phi^{\bf x}_{F} {D}_{F} M $. The twisted localization is a exact functor from ${\mathcal U}$-mod to $D_F {\mathcal U}$-mod.

\subsection{Twisted localization of generalized weight $({\mathcal U}, {\mathcal H})$-modules}

In this subsection we apply the functor $D_F^{\bf x}$ to the category of generalized weight $({\mathcal U}, {\mathcal H})$-modules. 

\begin{lemma}
Assume that $f_i \in {\mathcal U}^{\bf a_i}$ for $\bf{a_i} \in Q$. 

(i) If $M$ is a generalized weight $({\mathcal U}, {\mathcal H})$-module, then $D_F M$ is a generalized weight $(D_F{\mathcal U}, {\mathcal H})$-module.

(ii) If $N$ is a generalized weight $(D_F {\mathcal U}, {\mathcal H})$-module then  ${\bf f}^{\bf{x}} m \in N^{(\lambda+\bf{xa})}$ whenever $m \in N^{(\lambda)}$, where 
${\bf xa} = x_1{\bf a_1}+...+ x_k{\bf a_k}$. In particular, $\Phi_F^{\bf x} N$ is a generalized weight $(D_F {\mathcal U}, {\mathcal H})$-module.
\end{lemma}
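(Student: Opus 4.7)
The plan is to reduce both statements to two short observations: first, that each $f_i^{-1}$ is a weight element of $D_F\mathcal U$ of weight $-{\bf a_i}$, so (\ref{rootweight}) extends verbatim to $D_F\mathcal U$-modules; and second, that for $h\in\mathfrak h$ the series defining $\Theta_F^{\bf x}(h)$ collapses, by commutativity of the $f_j$'s, to the simple expression $h-({\bf xa})(h)$.

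For the first observation, applying $\mbox{ad}(h)$ to $f_i f_i^{-1}=1$ gives $[h,f_i^{-1}]=-f_i^{-1}[h,f_i]f_i^{-1}=-{\bf a_i}(h)f_i^{-1}$, so $D_F\mathcal U$ inherits the $Q$-grading of $\mathcal U$ and (\ref{rootweight}) carries over. For part~(i), every element of $D_FM$ is a sum of terms $f_1^{-n_1}\cdots f_k^{-n_k}m$ with $m\in M$ and $n_i\ge 0$. Writing $m=\sum_\lambda m_\lambda$ with $m_\lambda\in M^{(\lambda)}$ and $\mu=\sum_i n_i{\bf a_i}$, pushing each $h$ past the $f_i^{-1}$'s yields
$$
(h-(\lambda-\mu)(h))^N f_1^{-n_1}\cdots f_k^{-n_k} m_\lambda=f_1^{-n_1}\cdots f_k^{-n_k}(h-\lambda(h))^N m_\lambda=0
$$
for $N$ large. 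Thus $f_1^{-n_1}\cdots f_k^{-n_k}m_\lambda$ lies in the generalized weight space of $D_FM$ of weight $\lambda-\mu$; directness of the resulting decomposition is the standard fact that generalized eigenspaces of commuting operators are linearly independent.

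For part~(ii), I would compute $\Theta_F^{\bf x}(h)$ directly from its definition. Since $\mbox{ad}(f_j)(h)=-{\bf a_j}(h)f_j$ is a scalar multiple of $f_j$ and the $f_j$ commute, $\mbox{ad}(f_{j_1})\cdots\mbox{ad}(f_{j_r})(h)=0$ for every $r\ge 2$. Only the term with $i=0$ and those with exactly one $i_j=1$ survive, giving
$$
\Theta_F^{\bf x}(h)=h+\sum_j x_j\,\mbox{ad}(f_j)(h)\,f_j^{-1}=h-({\bf xa})(h).
$$
For any $v\in N$, the twisted action $h\cdot v^{\bf x}=(\Theta_F^{\bf x}(h)v)^{\bf x}$ then gives the clean identity $(h-(\lambda-{\bf xa})(h))v^{\bf x}=((h-\lambda(h))v)^{\bf x}$, whence by induction on $N$
$$
(h-(\lambda-{\bf xa})(h))^N m^{\bf x}=((h-\lambda(h))^N m)^{\bf x},
$$
which vanishes for large $N$ whenever $m\in N^{(\lambda)}$. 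Hence $m^{\bf x}\in(\Phi_F^{\bf x}N)^{(\lambda-{\bf xa})}$, so $\Phi_F^{\bf x}N$ is a generalized weight module; applying the same with ${\bf x}$ replaced by $-{\bf x}$ and unfolding the convention ${\bf f}^{\bf x}m:=m^{-{\bf x}}$ yields ${\bf f}^{\bf x}m\in N^{(\lambda+{\bf xa})}$.

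No genuine obstacle is expected: the content is just that $f_i^{\pm 1}$ are weight vectors and that the formula for $\Theta_F^{\bf x}(h)$ degenerates on $\mathfrak h$. The only bookkeeping care required is the convention ${\bf f}^{\bf x}m=m^{-{\bf x}}$ in part~(ii), so that the symbol $N^{(\lambda+{\bf xa})}$ should be interpreted inside $\Phi_F^{-{\bf x}}N$ (tacitly identified with $N$ when ${\bf x}\in\mathbb Z^k$).
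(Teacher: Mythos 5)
Your proof is correct and complete; the paper states this lemma without any proof, treating it as a routine observation, so there is no paper argument to compare against. Both ingredients of your argument are the right ones: for (i), the reduction of an arbitrary element of $D_F M$ to a finite sum of terms $f_1^{-n_1}\cdots f_k^{-n_k}m_\lambda$ together with the commutator identity $[h,f_i^{-1}]=-{\bf a_i}(h)f_i^{-1}$; for (ii), the collapse of the defining series for $\Theta_F^{\bf x}(h)$ to $h-({\bf xa})(h)$, which rests precisely on $\mathrm{ad}(f_j)(h)\in\mathbb C f_j$ and the pairwise commutativity of the $f_j$. You were also careful about the only genuine bookkeeping pitfall, namely that ${\bf f}^{\bf x}m$ denotes $m^{-\bf x}\in\Phi_F^{-\bf x}N$, so the claimed membership ${\bf f}^{\bf x}m\in N^{(\lambda+{\bf xa})}$ must be read under the tacit identification of $N$ with its twists.
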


\subsection{Twisted localization in $({\mathcal D}, {\mathcal H})$-mod} \label{subsec-twloc-dh}

Consider now ${\mathcal U} = {\mathcal D} (n+1)$ and ${\mathcal H} = {\mathbb C}[t_0 \partial_0,...,t_n\partial_n]$. In this case we set $D_i^+ = D_{\{ \partial_i \}}$, $D_i^{-} = D_{\{ t_i \}}$, $D_i^{x,-}:=  D_{\{ t_i\} }^x$,   $D_i^{x,+} = D_{\{ \partial_i\} }^x$, and   $D_{i,j}^{x} = D_{\{ t_i \partial_j\} }^x$ for $x \in {\mathbb C}$ and $i \neq j$.

\begin{lemma}\label{relations} Assume that $i\in{\rm Int}(\nu)$ and
$J\in\mathcal P(\nu)$.

If $i\in J$, then
$$D^+_i ({\mathcal F}^{\log}_\nu(J))\simeq {\mathcal F}^{\log}_\nu(J),\,\,D^-_i ({\mathcal F}^{\log}_\nu(J))\simeq {\mathcal F}^{\rm
log}_\nu(J\setminus i).$$

If $i\notin J$, then
$$D^-_i ({\mathcal F}^{\log}_\nu(J))\simeq {\mathcal F}^{\rm
log}_\nu(J),\,\,D^+_i ({\mathcal F}^{\log}_\nu(J))\simeq {\mathcal
F}^{\log}_\nu(J\cup i).$$

\end{lemma}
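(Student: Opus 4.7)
The plan is to reduce the lemma to the one-variable case via the tensor product structure of $\mathcal F^{\log}_\nu(J)$ and then verify four one-variable identifications.

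Since $\partial_i$ and $t_i$ commute with the generators of the $j$-th tensor factor of $\mathcal D(n+1)=\mathcal D(1)^{\otimes(n+1)}$ for $j\neq i$, the functor $D_i^{\pm}$ acts only on the $i$-th tensor factor of $\mathcal F^{\log}_\nu(J)=I_0\otimes\cdots\otimes I_n$. Using $i\in\operatorname{Int}(\nu)$ we may normalize $\nu_i=0$, so the $i$-th factor is $\mathcal F_0^{\log}$ when $i\notin J$ and $(\mathcal F_0^{\log})^\sigma$ when $i\in J$. The lemma reduces to showing, in the $\mathcal D(1)$-setting, that $D^-\mathcal F_0^{\log}\simeq\mathcal F_0^{\log}$, $D^+(\mathcal F_0^{\log})^\sigma\simeq(\mathcal F_0^{\log})^\sigma$, $D^+\mathcal F_0^{\log}\simeq(\mathcal F_0^{\log})^\sigma$, and $D^-(\mathcal F_0^{\log})^\sigma\simeq\mathcal F_0^{\log}$.

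The first two are immediate: $t$ acts invertibly on $\mathcal F_0^{\log}=\mathbb C[t,t^{-1}][u]$, so $D^-$ is trivial; dually, $\partial$ acts on $(\mathcal F_0^{\log})^\sigma$ as $\sigma(\partial)=-t$, which is invertible for the same reason.

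For the third isomorphism, the key claim is that the homomorphism $\theta^+\colon\mathcal F_0^{\log}\to(\mathcal F_0^{\log})^\sigma$ constructed in the proof of Lemma \ref{end2} is actually surjective. The endomorphism $z=\partial/\partial u$ of the underlying vector space is a $\mathcal D$-linear endomorphism of both $\mathcal F_0^{\log}$ and $(\mathcal F_0^{\log})^\sigma$, since it commutes with both $t$ and $\partial$ and hence with $\sigma(t)$ and $\sigma(\partial)$. Consider the composition $\theta^+\circ\theta^-\in\operatorname{End}_\mathcal D((\mathcal F_0^{\log})^\sigma)\simeq\mathbb C[[z]]$, where the endomorphism ring is computed by the $\sigma$-twist of Lemma \ref{end1}. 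By the kernel computation in the proof of Lemma \ref{end2} we have $\ker(\theta^+\circ\theta^-)=\mathcal F_0^\sigma=\ker z$, which forces $\theta^+\circ\theta^-=cz$ for some $c\in\mathbb C^\times$ inside the power series ring. Since $z=\partial/\partial u$ is surjective on $\mathbb C[t,t^{-1}][u]$ (one integrates in $u$), the composition $\theta^+\circ\theta^-$ is surjective, hence so is $\theta^+$. This gives the short exact sequence
$$
0\to\mathcal F_0^+\to\mathcal F_0^{\log}\xrightarrow{\theta^+}(\mathcal F_0^{\log})^\sigma\to 0.
$$
Applying the exact functor $D^+$ and using $D^+\mathcal F_0^+=0$ (since $\partial$ is locally nilpotent on $\mathbb C[t]$) together with $D^+(\mathcal F_0^{\log})^\sigma=(\mathcal F_0^{\log})^\sigma$, we obtain $D^+\mathcal F_0^{\log}\simeq(\mathcal F_0^{\log})^\sigma$. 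The fourth isomorphism follows by the symmetric argument: $\theta^-$ is surjective using the identity $\theta^-\circ\theta^+=z$ from Lemma \ref{end2}, and applying $D^-$ to the sequence $0\to\mathcal F_0^-\to(\mathcal F_0^{\log})^\sigma\xrightarrow{\theta^-}\mathcal F_0^{\log}\to 0$ yields $D^-(\mathcal F_0^{\log})^\sigma\simeq\mathcal F_0^{\log}$, using that $t$ is locally nilpotent on $\mathcal F_0^-$.

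The main obstacle is establishing the surjectivity of $\theta^{\pm}$; this rests on identifying the composition $\theta^{\mp}\circ\theta^{\pm}$ inside the power series endomorphism ring with the surjective endomorphism $z=\partial/\partial u$. Once this is done, the exact sequence arguments are routine.
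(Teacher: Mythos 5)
Your proposal is correct. The reduction to a single tensor factor of $\mathcal D(1)$, the invertibility arguments for the two "easy" localizations, and the vanishing $D^+\mathcal F_0^+ = 0$, $D^-\mathcal F_0^- = 0$ are all sound, as is the crucial surjectivity claim for $\theta^\pm$ once one notes that an element of $\operatorname{End}_{\mathcal D}((\mathcal F_0^{\log})^\sigma)\simeq\mathbb C[[z]]$ with kernel exactly $\ker z$ has the form $zh$ with $h$ a unit (you wrote $cz$ with $c\in\mathbb C^\times$, which is slightly too restrictive, but irrelevant for the conclusion, since a unit of $\mathbb C[[z]]$ is an automorphism and $z=\partial/\partial u$ is surjective). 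Your route does differ from the paper's. The paper also reduces to the rank-one case but handles all of $\mathcal F_\nu^{\log}(J)$ through the commutation relations $D_i^+\sigma_i = \sigma_i D_i^-$ and $D_i^\pm\sigma_j = \sigma_j D_i^\pm$, then computes $D^+\mathcal F_0^{\log}$ directly as $\mathcal F_0^{\log}/\mathcal F_0^+$ (quotient by the $\partial$-torsion) and identifies it with $(\mathcal F_0^{\log})^\sigma$ via a map constructed by injectivity and then a socle-filtration comparison. You instead make the homomorphism $\theta^+$ of Lemma \ref{end2} do all the work: proving it is surjective yields the short exact sequence $0\to\mathcal F_0^+\to\mathcal F_0^{\log}\to(\mathcal F_0^{\log})^\sigma\to 0$ to which the exact functor $D^+$ applies immediately. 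Your argument is more functorial — you never need to analyze what $D^+\mathcal F_0^{\log}$ looks like as a module or compare socle filtrations — and it makes explicit the surjectivity of $\theta^\pm$, which is asserted in the statement of Lemma \ref{end2} ($\theta(\mathcal F_0^{\log}) = (\mathcal F_0^{\log})^\sigma$) but not verified in its proof. The paper's approach has the advantage of being self-contained at this step and of exhibiting $D^+\mathcal F_0^{\log}$ concretely as the quotient by torsion, at the price of requiring the implicit fact that $\partial$ acts bijectively on that quotient.
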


\begin{proof} First, we have the relations
$$D^+_i\sigma_i=\sigma_i D^-_i,\,\,D^\pm_i\sigma_j=\sigma_jD^\pm_i\,\,\text{if}\,\,i\neq j.$$
Therefore it is sufficient to prove 
$$D^-_i ({\mathcal F}_\nu^{\log})\simeq {\mathcal F}_\nu^{\log},\,\,D^+_i ({\mathcal F}_\nu^{\log})\simeq {\mathcal F}_\nu^{\log}(\{i\}).$$
Note that $\mathcal D(n+1)=\mathcal D(n)\otimes D(1)$ and 
$${\mathcal F}_\nu^{\log}={\mathcal F}_{\nu,i}^{\log}\otimes\mathbb C[t_i,t_i^{-1},\log t_i]$$
where ${\mathcal F}_{\nu,i}^{\log}$ is the analogue of ${\mathcal F}_{\nu}^{\log}$ for the corresponding $\mathcal D(n)$.
Hence we need to prove the latter relation only for $\mathcal D(1)$. Note that
in this case we may assume $\nu=0$. Then 
${\mathcal F}^{\log}_\nu(\emptyset)=\mathbb C[t,t^{-1},\log t]$ and
it has an increasing socle filtration
$$\mathbb C[t,t^{-1},\log t]_0 = 0\subset\mathbb C[t,t^{-1},\log t]_1\subset\mathbb C[t,t^{-1},\log t]_2\subset\dots,$$
where
$$\mathbb C[t,t^{-1},\log t]_{2i}=\mathbb
C[t,t^{-1}]\otimes (\sum_{j=0}^{i-1}\mathbb C(\log t)^j), \; i\geq 1$$
and 
$$\mathbb C[t,t^{-1},\log t]_{2i+1}=\mathbb C[t,t^{-1},\log
t]_{2i}\oplus\mathbb C[t](\log t)^{i}, \; i \geq 0.$$
Note that $\mathbb C[t,t^{-1},\log t]_1$ is the maximal subspace
on which  $\partial$ acts locally nilpotently.
Hence
$$D_1^+(\mathbb C[t,t^{-1},\log t])=\mathbb C[t,t^{-1},\log
t]/\mathbb C[t,t^{-1},\log t]_1.$$
But $\mathbb C[t,t^{-1},\log t]/\mathbb C[t,t^{-1},\log t]_1$
has a unique simple submodule isomorphic to $\mathcal S_\nu(\{1\})$. Hence we
have a homomorphism $D_1^+({\mathcal F}^{\log}_\nu(\emptyset))\to {\mathcal F}^{\log}_\nu(\{1\})$.
It is injective since $D_1^+({\mathcal F}^{\log}_\nu(\emptyset))$ has a
unique simple submodule which maps to a non-zero space,  and it is surjective by
comparison of the socle filtration of both modules. Thus, we have
$D_1^+({\mathcal F}^{\log}_\nu(\emptyset))\simeq{\mathcal F}^{\log}_\nu(\{1\})$.

The isomorphism $D^-_i ({\mathcal F}_\nu^{\log})\simeq {\mathcal F}_\nu^{\log}$ follows  from the fact that $t_i$ is an invertible operator on 
${\mathcal F}_\nu^{\log}$.
\end{proof}

Recall that $\varepsilon_i \in {\mathbb C}^{n+1}$ are defined by $(\varepsilon_i)_j = \delta_{ij}$. Lemma \ref{relations} easily implies the following.
\begin{lemma} With notations as above
$$D^{x,+}_i ({\mathcal F}^{\log}_\nu(J))\simeq {\mathcal
F}^{\log}_{\nu- x \varepsilon_i}(J\cup i), \; D^{x,-}_i ({\mathcal F}^{\log}_\nu(J))\simeq {\mathcal
F}^{\log}_{\nu + x \varepsilon_i}(J\setminus i).$$
\end{lemma}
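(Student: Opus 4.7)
The plan is to reduce the claim to a one-dimensional statement by exploiting the tensor product decomposition $\mathcal{D}(n+1)=\mathcal{D}(1)^{\otimes(n+1)}$, apply the preceding Lemma \ref{relations} to dispense with the untwisted localization, and identify the resulting twisted module via uniqueness of indecomposable injective hulls.

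Since $\partial_i$ and $t_i$ both lie entirely in the $i$-th tensor factor of $\mathcal{D}(n+1)$, the functors $D_i^{\pm}$ and the twists $\Phi^x_{\{\partial_i\}}$, $\Phi^x_{\{t_i\}}$ act as the identity on every other factor. Writing $\mathcal{F}^{\log}_\nu(J)=I_0\otimes\cdots\otimes I_n$ and using Lemma \ref{relations} to identify
$$D^+_i\mathcal{F}^{\log}_\nu(J)\simeq \mathcal{F}^{\log}_\nu(J\cup i), \qquad D^-_i\mathcal{F}^{\log}_\nu(J)\simeq\mathcal{F}^{\log}_\nu(J\setminus i),$$
reduces the claim to a purely one-dimensional statement about the $i$-th factor. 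Since $i\in\mathrm{Int}(\nu)$ we may further assume $\nu_i=0$ by absorbing an integer. The $+$ case then amounts to $\Phi^x_{\{\partial\}}(\mathcal{F}^{\log}_0)^\sigma\simeq(\mathcal{F}^{\log}_{-x})^\sigma$, and the $-$ case to an analogous identity for $\Phi^x_{\{t\}}\mathcal{F}^{\log}_0$, with the precise sign dictated by the weight-shift formula of the lemma immediately above.

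For integer $x$ both identities are immediate: $\Phi^x$ is canonically isomorphic to the identity on $D_F\mathcal{D}$-modules, and $\mathcal{F}^{\log}_\mu\simeq\mathcal{F}^{\log}_{\mu+k}$ for $k\in\mathbb{Z}$ by the definition in \S\ref{subsec-dn}. For $x\notin\mathbb{Z}$ the argument goes through injective hulls. Both sides are injective in $(\mathcal{D},\mathcal{H})$-mod: the right-hand sides by Lemma \ref{inj4}, and the left-hand sides because $\Phi^x$ preserves injectives in $(D_F\mathcal{D},\mathcal{H})$-mod by Lemma \ref{lemma-conj}(v), and this transfers to $(\mathcal{D},\mathcal{H})$-mod by Lemma \ref{lem-loc-map}(iv); indecomposability of both sides follows since $\Phi^x$ is an equivalence. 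A direct computation of supports, combining the $\Phi^x$ weight-shift with the rule $\mathrm{supp}(M^\sigma)=-1-\mathrm{supp}(M)$, places both sides in the same block indexed by $\bar x\in\mathbb{C}/\mathbb{Z}$. By Lemma \ref{simpleD} that block contains a unique simple module $\mathcal{F}_x$ when $x\notin\mathbb{Z}$, and hence a unique indecomposable injective up to isomorphism, forcing the two sides to agree.

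The main technical obstacle I anticipate is the sign bookkeeping in the support computation. Because $\partial_i\in\mathcal{D}^{-\varepsilon_i}$ and $t_i\in\mathcal{D}^{\varepsilon_i}$ have opposite weights, the twists $\Phi^x_{\{\partial_i\}}$ and $\Phi^x_{\{t_i\}}$ shift weights with opposite signs, and these shifts must be combined correctly with the $\sigma_i$-twist that is present on the $i$-th factor in some subcases of the classification but not others. Once the supports have been verified to coincide in $\mathbb{C}^{n+1}$, the injective-hull uniqueness argument handles both parts of the lemma uniformly.
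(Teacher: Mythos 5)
Your overall strategy --- reduce to the single factor $\mathcal D(1)$, dispose of the untwisted localization via Lemma \ref{relations}, and then handle the twist $\Phi^x_F$ by matching supports and invoking uniqueness of indecomposable injectives in a block --- is sound and is essentially the ``easy'' implication that the paper gestures at with its one-line proof. The injective-hull detour is valid but heavier than needed: because $t_i$ (resp.\ $\partial_i$) is already invertible on the relevant tensor factor, the untwisted localization is the identity there, and the twist $\Phi^x_{\{t_i\}}$ (resp.\ $\Phi^x_{\{\partial_i\}}$) on the explicit module $t^{\nu_i}\mathbb C[t_i^{\pm 1},u_i]$ is literally given by an element-level map and can be identified with a shifted $\mathcal F^{\log}$ directly. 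That is almost certainly what the authors mean by ``Lemma \ref{relations} easily implies the following.''

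The genuine gap in your write-up is precisely the step you flag as ``the main technical obstacle'' and then do not carry out. You assert that the support computation ``places both sides in the same block'' but never verify it, and it is not a formality: with the paper's stated conventions ($\Theta^x_F(h)=h-x\mathbf a(h)$ for $h\in\mathfrak h$ with $f\in\mathcal U^{\mathbf a}$, and $D_F^{\mathbf x}:=\Phi_F^{\mathbf x}D_F$), one gets $\operatorname{supp}(\Phi^x_{\{t_i\}}N)=\operatorname{supp}(N)-x\varepsilon_i$ and $\operatorname{supp}(\Phi^x_{\{\partial_i\}}N)=\operatorname{supp}(N)+x\varepsilon_i$. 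Combined with $\operatorname{supp}(M^{\sigma_i})=-\operatorname{supp}_i(M)-1$, this confirms the block for the $D^{x,+}_i$ isomorphism ($\nu-x\varepsilon_i$), but for $D^{x,-}_i$ it yields $i$-th coordinate support $\nu_i-x+\mathbb Z$, not $\nu_i+x+\mathbb Z$ as $\mathcal F^{\log}_{\nu+x\varepsilon_i}(J\setminus i)$ would require; so the two sides lie in the same block only when $2x\in\mathbb Z$. Thus the computation you defer is exactly the point at which the stated identity and the paper's definition of $D_F^{\mathbf x}$ fail to match (the $D^{x,-}_i$ part should read $\mathcal F^{\log}_{\nu-x\varepsilon_i}(J\setminus i)$, consistent with Lemma \ref{local-com}(iii), which does check out against the paper's definitions). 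A proof that claims to compare supports must actually do so, and here doing so uncovers an inconsistency rather than a confirmation; leaving it as ``sign bookkeeping to be done'' is therefore not a small omission but the whole content of the lemma.
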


\begin{corollary} \label{cor-locid}
Let $J = \{ j_1,...,j_k\}$ be a nonempty proper subset of $\{0,...,n\}$, and let $\{i_1,...,i_{\ell} \} = \{0,...,n\} \setminus J$. Then for any $\mu, \nu \in {\mathbb C}^{n+1}$,
$$
D_{i_1}^{\nu_{i_1} - \mu_{i_1}, -}... D_{i_k}^{\nu_{i_\ell} - \mu_{i_\ell}, -} D_{j_1}^{\mu_{j_1} - \nu_{j_1}, +}...D_{j_k}^{\mu_{j_k} - \nu_{j_k}, +} \mathcal F^{\log}_{\mu} \simeq  \mathcal F^{\log}_{\nu} (J).
$$

\end{corollary}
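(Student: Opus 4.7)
The plan is to iterate the preceding lemma, tracking how the subset $J$ and the parameter in $\mathcal F^{\log}$ change at each step. First, $\mathcal F^{\log}_\mu = \mathcal F^{\log}_\mu(\emptyset)$ since $\sigma_\emptyset$ is the identity. Also, because $t_i$ and $\partial_j$ commute in $\mathcal D(n+1)$ for $i \neq j$, the twisted-localization functors $D_i^{x,\pm}$ and $D_j^{y,\pm}$ commute for distinct indices, so the order of composition in the statement is immaterial and we may process the indices one at a time.

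I would then apply the ``$+$'' functors successively. After applying $D_{j_1}^{\mu_{j_1}-\nu_{j_1},+}, \ldots, D_{j_r}^{\mu_{j_r}-\nu_{j_r},+}$ in order, an easy induction using the first half of the preceding lemma yields $\mathcal F^{\log}_{\mu^{(r)}}(\{j_1,\ldots,j_r\})$, where $\mu^{(r)}_{j_s} = \nu_{j_s}$ for $s \leq r$ and $\mu^{(r)}_p = \mu_p$ otherwise. The key check is that the shift parameter $x = \mu_{j_r}-\nu_{j_r}$ is chosen precisely so that the replacement $\mu^{(r-1)} \mapsto \mu^{(r-1)} - x\varepsilon_{j_r}$ provided by the lemma turns the $j_r$-coordinate $\mu_{j_r}$ into $\nu_{j_r}$. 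After $k$ steps one arrives at $\mathcal F^{\log}_{\mu'}(J)$ with $\mu'|_J = \nu|_J$ and $\mu'$ equal to $\mu$ on the complement of $J$.

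Next I would apply the ``$-$'' functors $D_{i_s}^{\nu_{i_s}-\mu_{i_s},-}$ for $s=1,\ldots,\ell$. Since $i_s \notin J$, we have $J \setminus \{i_s\} = J$, so the second half of the preceding lemma guarantees that the subset $J$ is preserved and only the $i_s$-coordinate of the parameter is shifted, by $+(\nu_{i_s}-\mu_{i_s})$, sending $\mu_{i_s}$ to $\nu_{i_s}$. After all $\ell$ such applications the module becomes $\mathcal F^{\log}_\nu(J)$, as required.

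I do not expect any substantial obstacle: the corollary is a direct bookkeeping consequence of the preceding lemma, and the only delicate point is matching the signs of the twisting parameters, which is automatic from the fact that $t_i \in \mathcal D^{\varepsilon_i}$ and $\partial_i \in \mathcal D^{-\varepsilon_i}$. I would also note what appears to be a typo in the left-hand side: the last subscript in the ``$-$'' chain should read $i_\ell$ rather than $i_k$ so as to match the length $\ell$ of the complement of $J$.
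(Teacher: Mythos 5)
Your proof is correct and follows the intended route: the paper presents this as an immediate corollary of the preceding lemma, supplying no separate argument, and iterating that lemma with the bookkeeping you carry out (first the $D^{x,+}_{j_s}$ factors to bring in the set $J$ and adjust the $J$-coordinates, then the $D^{x,-}_{i_s}$ factors to fix the remaining coordinates while leaving $J$ untouched) is exactly what is meant. Your choice of twisting parameters does turn each coordinate of $\mu$ into the corresponding coordinate of $\nu$, and the appeal to commutativity of the $D^{x,\pm}_i$ for distinct indices, while not strictly necessary since the corollary already fixes an order of composition, is harmless and is in any case recorded later in the paper. Your observation about the misprinted subscript ($i_k$ should be $i_\ell$ in the last $D^{-}$ factor) is also right.
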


\begin{lemma} \label{local-com} The following isomorphisms hold for every $M$ in $(\mathcal D, \mathcal H)$-mod, $i \neq j$, and $x,y \in {\mathbb C}$.

(i) $D^{x,\pm}_i D^{y,\pm}_j M \simeq D^{y,\pm}_j D^{x,\pm}_i M $.

(ii)$D^{x,\pm}_i D^{y,\pm}_i M \simeq D^{x+y,\pm}_i M$

(iii) $D^{x,-}_i D^{x,+}_j M \simeq D^{x,+}_j D^{x,-}_i M  \simeq D_{i,j}^x M$.

\end{lemma}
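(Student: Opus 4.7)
Parts (i) and (ii) are essentially formal consequences of the framework set up in Section~\ref{subsec-loc-gen}. For (i), when $i\neq j$ the relevant pairs of elements (among $t_i,\partial_i,t_j,\partial_j$) commute in ${\mathcal D}$, so both the underlying Ore localizations and the twisting automorphisms commute: the former because $D_{\{f\}} D_{\{g\}} M \simeq D_{\{g\}} D_{\{f\}} M$ whenever $[f,g]=0$, and the latter because $\Theta^x_{\{f\}}$ and $\Theta^y_{\{g\}}$ are given by formal series in $\operatorname{ad}(f)$ and $\operatorname{ad}(g)$, which are then commuting locally nilpotent derivations. Combining via Lemma~\ref{lemma-conj}(ii) yields (i). For (ii), apply Lemma~\ref{lemma-conj}(ii) directly, namely $\Phi^x_{\{f\}} \Phi^y_{\{f\}} \simeq \Phi^{x+y}_{\{f\}}$, together with the tautological identity $D_{\{f\}} D_{\{f\}} M \simeq D_{\{f\}} M$.

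Part (iii) is the substantive claim; the first isomorphism reduces to (i). For the second, my plan is first to establish the key algebraic identity
\[
\Theta^x_{\{t_i\}} \circ \Theta^x_{\{\partial_j\}} = \Theta^x_{\{t_i \partial_j\}}
\]
as automorphisms of $D_{\{t_i, \partial_j\}} {\mathcal D}$. Here $\Theta^x_{\{t_i\partial_j\}}$ extends to this larger localization using the identity $(t_i\partial_j)^{-1} = t_i^{-1}\partial_j^{-1}$, valid because $[t_i,\partial_j]=0$. This is verified by a finite computation on the algebra generators $t_k, \partial_k, t_k \partial_k$ of ${\mathcal D}$, exploiting the local nilpotency of $\operatorname{ad}(t_i)$ and $\operatorname{ad}(\partial_j)$ together with their commutativity; for example, one finds $\Theta^x_{\{t_i\partial_j\}}(t_k\partial_k) = t_k\partial_k - \delta_{ik}x + \delta_{jk}x$, matching the composition. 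Having established this identity, the invertibility of $t_i\partial_j$ in $D_{\{t_i,\partial_j\}}{\mathcal D}$ produces via the universal property a canonical comparison map $D_{i,j}^x M \to D^{x,+}_j D^{x,-}_i M$, and the identity above makes it ${\mathcal D}$-equivariant after twisting. One then shows it is an isomorphism by a weight-by-weight analysis using the generalized weight decomposition of $M$.

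The principal obstacle will be the final step: proving bijectivity. The underlying sets of $D_{i,j}^x M$ and $D^{x,+}_j D^{x,-}_i M$ come from a priori different Ore localizations (inverting $t_i\partial_j$ versus inverting $t_i$ and $\partial_j$ individually), with the former sitting naturally as only a submodule of the latter in the untwisted picture. The twist by $x$ is essential: heuristically, the formal identity $(t_i\partial_j)^x = t_i^x \partial_j^x$ (legitimate because $t_i$ and $\partial_j$ commute) supplies matching ``fractional power'' degrees of freedom on both sides and should upgrade the submodule inclusion to an isomorphism. Making this rigorous for an arbitrary $M \in (\mathcal{D},\mathcal{H})\text{-mod}$ requires tracking the weight shifts induced by both twists and identifying elements of $D^{x,+}_j D^{x,-}_i M$ with elements of $D_{i,j}^x M$ weight-space by weight-space, using that $\mathcal{H}$ acts semisimply on $\mathcal{D}$ and locally finitely on $M$.
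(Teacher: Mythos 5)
Parts (i) and (ii) are fine, and for (iii) you correctly isolate the key algebraic identity $\Theta^x_{\{t_i\}}\circ\Theta^x_{\{\partial_j\}}=\Theta^x_{\{t_i\partial_j\}}$; together with the formula $(t_i\partial_j)^x\cdot m\mapsto t_i^x\cdot(\partial_j^x\cdot m)$, this is essentially what the paper records.

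However, what you describe as the ``principal obstacle'' rests on a misconception, and the weight-by-weight analysis you propose is unnecessary. You claim that, untwisted, $D_{\{t_i\partial_j\}}M$ sits as a proper submodule of $D_{\{t_i,\partial_j\}}M$ and that the twist must ``upgrade'' the inclusion to an isomorphism. This is not so: since $i\neq j$, the elements $t_i$ and $\partial_j$ commute, hence already in the Ore localization of $\mathcal D$ at the single element $t_i\partial_j$ one has $t_i^{-1}=(t_i\partial_j)^{-1}\partial_j$ and $\partial_j^{-1}=(t_i\partial_j)^{-1}t_i$. Therefore $D_{\{t_i\partial_j\}}\mathcal D=D_{\{t_i,\partial_j\}}\mathcal D$ as rings, and so $D_{\{t_i\partial_j\}}M=D_{\{t_i,\partial_j\}}M=D_{\{\partial_j\}}D_{\{t_i\}}M$ for every $M$ --- an equality, with no twist and no weight analysis involved. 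Once this is noticed, (iii) is immediate: both sides are the same underlying $D_{\{t_i,\partial_j\}}\mathcal D$-module twisted by the same automorphism (your $\Theta$-identity, plus the observation that $\Theta^x_{\{t_i\}}$ fixes $\partial_j$, so $D_{\{\partial_j\}}$ commutes with $\Phi^x_{\{t_i\}}$). The paper's one-line proof $t_i^x\cdot(\partial_j^x\cdot m)\mapsto(t_i\partial_j)^x\cdot m$ is well defined on the whole localization precisely because the two localized rings coincide; there is no bijectivity left to check.
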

\begin{proof} The isomorphisms can be defined as follows\\
$t_i^{x} \cdot (t_j^y ) \cdot m) \mapsto t_j^y \cdot (t_i^x \cdot m)$ and  $\partial_i^{x} \cdot (\partial_j^y ) \cdot m) \mapsto \partial_j^y \cdot (\partial_i^x \cdot m)$  for (i);\\
$t_i^{x} \cdot (t_i^y \cdot m) \mapsto t_i^{x+y} \cdot m$ and $\partial_i^{x} \cdot (\partial_i^y \cdot m) \mapsto \partial_i^{x+y} \cdot m$ for (ii);\\
$ t_i^x  \cdot (\partial_j^{x}\cdot m) \mapsto (t_i \partial_j)^{x} \cdot m$ for (iii).
\end{proof} 

\begin{corollary} \label{cor-dij} Let  $\mu, \nu \in {\mathbb C}^{n+1}$ be such that $\mu_0 +...+ \mu_n = \nu_0+...+ \nu_n$. For any proper nonempty subset $J$ of $\{0,...,n\}$, there is a subset $S_J$ of $\{ (i,j) \; | \; i \notin J, j \in J\}$ with the following properties

(i) $S_J$ consists of $n$ elements;

(ii) For every $i \notin J$ (respectively, $j \in J$), there exists $j \in J$ (respectively, $i \notin J$) such that $(i,j) \in S_J$;

(iii)$$
\prod_{(i,j) \in S_J} D_{i,j}^{z(i,j)} \mathcal F^{\log}_{\mu} \simeq  \mathcal F^{\log}_{\nu} (J).
$$
for some $z(i,j) \in {\mathbb C}$ (note that the functors $ D_{i,j}^{z(i,j)}$ in the above product commute due to Lemma \ref{local-com}).
\end{corollary}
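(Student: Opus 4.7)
The plan is to build directly on Corollary \ref{cor-locid} and repackage the one-variable twisted localization functors $D^{x,\pm}_i$ into the two-variable functors $D^z_{i,j}$ by means of Lemma \ref{local-com}. Concretely, set $a_i := \nu_i - \mu_i$ for $i \notin J$ and $b_j := \mu_j - \nu_j$ for $j \in J$, so that Corollary \ref{cor-locid} gives
$$\mathcal F^{\log}_\nu(J)\;\simeq\;\prod_{i \notin J} D^{a_i,-}_i \prod_{j \in J} D^{b_j,+}_j\,\mathcal F^{\log}_\mu.$$
The hypothesis $|\mu|=|\nu|$ is exactly the balance relation $\sum_{i \notin J} a_i = \sum_{j \in J} b_j$, which I will exploit in the next step.

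Next I would interpret the data $(a_i)_{i \notin J}$, $(b_j)_{j \in J}$ as supplies and demands on the complete bipartite graph $G$ with parts $\{0,\ldots,n\}\setminus J$ and $J$. Because supplies and demands are balanced, this transportation problem admits a basic feasible solution, that is, a function $z:S_J \to \mathbb{C}$ whose support $S_J$ is a spanning tree of $G$ and which satisfies $\sum_{j:(i,j)\in S_J} z(i,j) = a_i$ and $\sum_{i:(i,j)\in S_J} z(i,j) = b_j$. Since $G$ has $\ell+k=n+1$ vertices, any spanning tree has exactly $n$ edges and touches every vertex; these observations give properties (i) and (ii) of the corollary for free.

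Finally I would use Lemma \ref{local-com} to rewrite the product: part (ii) lets me split each $D^{a_i,-}_i$ as $\prod_{j:(i,j)\in S_J} D^{z(i,j),-}_i$ and each $D^{b_j,+}_j$ similarly, part (i) lets me commute the resulting one-variable factors so that each matched pair $D^{z(i,j),-}_i D^{z(i,j),+}_j$ sits together, and part (iii) then collapses each such pair into $D^{z(i,j)}_{i,j}$. Another application of part (i) of Lemma \ref{local-com} (together with the commutation relations among the $D_{i,j}$) reassembles the remaining product in any order indexed by $S_J$, yielding property (iii).

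The only nontrivial step is the combinatorial one, namely the existence of a spanning-tree solution to the bipartite transportation problem over $\mathbb{C}$. This is a classical fact and I expect to cite it rather than prove it in place; the remaining bookkeeping — keeping track of how the various $D^{x,\pm}_i$ commute and combine — is straightforward from Lemma \ref{local-com}, with the mild subtlety that some $z(i,j)$ may vanish, in which case the corresponding factor $D^{0}_{i,j}$ is the identity and harmlessly contributes to $S_J$ so as to maintain $|S_J|=n$.
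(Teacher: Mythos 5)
Your proof takes essentially the same route as the paper: set up the linear system for the exponents $z(i,j)$ coming from Corollary \ref{cor-locid}, observe that $|\mu|=|\nu|$ is the balance condition making the system consistent, choose a solution supported on an $n$-edge spanning-tree-shaped subset $S_J$ of the bipartite graph, and then use Lemma \ref{local-com} to split, commute, and recombine the one-variable localizations into the $D^{z(i,j)}_{i,j}$. Your remark at the end is a genuine (if minor) improvement over the paper's wording: the paper takes $S_J$ to be the set of indices with $z(i,j)\neq 0$, which need not have $n$ elements nor touch every vertex, whereas you correctly keep the full spanning tree and let vanishing exponents contribute identity factors.
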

\begin{proof} Let $J = \{j_1,...,j_k \}$ and $\{ i_1,...,i_{\ell}\}$ is the complement of $J$ in $\{0,1,...,n \}$. Consider the system 
\begin{eqnarray*}
\sum_{s = 1}^{k} z(r,s) & = & \nu_{i_r} - \mu_{i_r}, \; r=1,...,\ell \\
\sum_{r = 1}^{\ell} z(r,s) & = & \mu_{j_s} - \nu_{j_s} , \; s = 1,...,k
\end{eqnarray*}
We can find a solution of the above system such that the set $S_J'$ of nonzero $z(i,j)$ satisfy conditions (i) and (ii) of the corollary.  To complete the proof we use the isomorphism in Corollary \ref{cor-locid} and the preceding lemma.
\end{proof}

\subsection{Twisted localization in $\mathcal{GB}$} \label{tw-loc-gb}

In this case we consider ${\mathcal U} = U(\mathfrak{sl} (n+1))$ and ${\mathcal H} = S(\mathfrak{h})$. The multiplicative sets will be always of the form $F = \langle e_{\alpha} \: | \: \alpha\in \Gamma \rangle$, where $\Gamma$ is a  set of $k$ commuting roots and $e_{\alpha}$ is in the $\alpha$-root space of $\mathfrak{sl} (n+1)$.  For ${\bf x} \in {\mathbb C}^{k}$, we write $D_{\Gamma}$ and $D_{\Gamma}^{\bf x}$ for $D_{F}$ and $D_{F}^{\bf x}$, respectively.

Recall that a subset $\Gamma$ of $\Delta$ is  a {\it set of commuting roots} if $\alpha, \beta \in \Gamma$ imply $[e_{\alpha}, e_{\beta}] = 0$. The maximal (with respect to inclusion) sets of commuting roots can be parametrized by the set  ${\mathcal P}_{-1}$ (cf. Section \ref{subsec-de}) of nonempty proper subsets of $\{0,1,...,n \}$. Indeed,
 for $J = \{ i_1,...,i_k \}$ in ${\mathcal P}_{-1}$,
$$
\Lambda_J : = \{ \varepsilon_i - \varepsilon_j \; | \; i \in J, j \notin J \}
$$
is a maximal set of commuting roots. Then one can check that the correspondence $J \mapsto \Lambda_J$ is a bijection between ${\mathcal P}_{-1}$ and the set of all maximal commuting sets of roots. In particular, if $\Gamma$ is a set of  $n$ linearly independent commuting roots then there is a unique $J \subset {\mathcal P}_{-1}$  such that $\Gamma \subset \Lambda_J$.

The following theorem is proven in \cite{M}.

\begin{proposition} \label{tloc-props}Every simple module in $\mathcal{GB}$ (equivalently, in $\mathcal{B}$) is isomorphic to $D_{\Gamma}^{\bf x} L$ for some simple highest weight module $L$ in $\mathcal{GB}$, a  set $\Gamma$ of $n$ commuting roots, and ${\bf x} \in {\mathbb C}^n$.

\end{proposition}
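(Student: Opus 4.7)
The plan is to follow Mathieu's twisted localization strategy, reducing an arbitrary simple $M\in\mathcal{GB}$ to a simple highest-weight module via a reversible twist. Since the simples of $\mathcal{GB}$ and $\mathcal{B}$ coincide (as noted earlier), it suffices to treat a simple weight module $M\in\mathcal{B}$.

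The first step is to analyze how root vectors act on $M$. For each root $\alpha$, the action of $e_\alpha$ on $M$ is either injective or locally nilpotent: using boundedness, on any fixed weight space the kernels of $e_\alpha^N$ stabilize, so the union of these kernels is a proper $\mathfrak{g}$-submodule unless it is all of $M$, and simplicity forces the dichotomy. Using Mathieu's essential support analysis together with the bijection $J\mapsto\Lambda_J$ recalled in \S\ref{tw-loc-gb} between the family $\mathcal{P}_{-1}$ and maximal commuting sets of roots, I would extract a set $\Gamma=\{\alpha_1,\dots,\alpha_n\}$ of $n$ commuting roots on which each $e_{\alpha_i}$ acts injectively on $M$.

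Because each $e_{\alpha_i}$ already acts invertibly, the canonical map $M\to D_\Gamma M$ is an isomorphism. Next I would choose ${\bf x}\in\mathbb{C}^n$ and a positive root system $\Delta^+$ containing $-\Gamma$ so that the twist $N:=\Phi_\Gamma^{-{\bf x}}M$ has support contained in $\lambda-\mathbb{Z}_{\geq 0}\Delta^+$ for some weight $\lambda$. By Lemma \ref{lemma-conj}, the twist shifts a weight $\mu$ by $-\sum x_i\alpha_i$, so a single linear alignment of supports produces the required cone condition. By Lemma \ref{lemma-conj}(v), $N$ is simple; and by construction every $e_\beta$ with $\beta\in\Delta^+$ acts locally nilpotently on $N$ (for $\beta\notin -\Gamma$ this is inherited from $M$, and for $\beta\in -\Gamma$ it follows from the twist). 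Hence $N\cong L$ is a simple highest weight module in $\mathcal{GB}$, and applying $\Phi_\Gamma^{\bf x}$ to the isomorphism $N\cong L$ yields $M\cong D_\Gamma^{\bf x} L$.

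The main obstacle is the existence of $\Gamma$ in the second step: one must control precisely the set of roots acting injectively on a simple bounded module, and this is the content of Mathieu's coherent-families technique, requiring his full classification of cuspidal modules together with parabolic induction to handle the non-cuspidal simples. The subsequent alignment of the twist parameter $\bf x$ is then essentially linear algebra on supports, and the simplicity and highest-weight conclusions are formal consequences of the lemmas on twisting collected in \S\ref{subsec-loc-gen}.
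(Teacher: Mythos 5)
The paper provides no argument for this proposition: it simply cites Mathieu \cite{M}. Your sketch is therefore the only proof on the table, and it is a faithful high-level account of Mathieu's twisted-localization strategy — the injective/locally-nilpotent dichotomy for each $e_\alpha$ (your torsion-submodule argument is correct), localization at a commuting set $\Gamma$ on which the root vectors act injectively (and hence, on a simple bounded module, bijectively, so $M\cong D_\Gamma M$), and a twist $\Phi_\Gamma^{-\mathbf{x}}$ to move the support into a highest-weight cone, whereupon the resulting simple module has a highest-weight submodule $L$ satisfying $D_\Gamma L = \Phi_\Gamma^{-\mathbf{x}}M$ by simplicity.

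One point to flag. You assert that one can always extract a set $\Gamma$ of $n$ commuting roots on which every $e_{\alpha_i}$ acts injectively. This is correct for cuspidal $M$ (all root vectors act injectively) and, more generally, whenever the set of injectively-acting roots contains $n$ commuting ones; but it fails when $M$ is a simple highest-weight module whose injective set is too small — most sharply for finite-dimensional $M$, where every root vector is locally nilpotent, the injective set is empty, and $D_\Gamma^{\mathbf{x}}L=0$ for every nonempty $\Gamma$. For such $M$ the statement only holds with $\Gamma=\emptyset$, $L=M$, and the ``$n$'' in the proposition read as a maximum rather than a fixed cardinality; the proposition as printed shares this imprecision. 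Apart from that, the genuinely hard content — the determination, in the intermediate (non-cuspidal, non-highest-weight) cases, of which roots act injectively and the production of the right $\Gamma$ and $\mathbf{x}$ via coherent families and essential supports — is exactly the substance of \cite{M}, and you are right to identify it as imported rather than rederived.
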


\section{Equivalence of categories of generalized weight ${\mathcal D}^E$-modules and  generalized bounded  $\mathfrak{sl}(n+1)$-modules}

\subsection{The functor $\Psi$} \label{subsec-psi}
Consider the homomorphism
$\psi:U(\mathfrak{sl}(n+1))\to \mathcal D (n+1)$ defined by $\psi(E_{ij})=t_i\partial_j$. The image of $\psi$ is contained in $\mathcal D^E$.
Using lift by $\psi$ any $\mathcal D^E$-module becomes 
$\mathfrak{sl}(n+1)$-module. Since $\psi(U(\mathfrak h))\subset\mathcal H$,  
one has a functor $\Psi:(\mathcal D^E, \mathcal H){\rm -mod}\to \overline{\mathcal{GB}}$.  
Obviously $\Psi$ is exact. 

In all statements below we assume that $\nu \in {\mathbb C}^{n+1}$.

\begin{lemma}\label{conn1} Let $|\nu|=a$. Then 
$$\Psi (^{\rm b}(\mathcal D^E, \mathcal H)_{\nu}^{a}{\rm -mod})\subset \mathcal{GB}^{\gamma (a\varepsilon_0)}_{\gamma(\nu)}.$$
\end{lemma}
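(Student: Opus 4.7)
The plan is to verify separately the three conditions defining membership in $\mathcal{GB}^{\gamma(a\varepsilon_0)}_{\gamma(\nu)}$: that $\Psi(M)$ is a generalized weight $\mathfrak{sl}(n+1)$-module with bounded weight multiplicities, that $\operatorname{supp}\Psi(M)\subset\gamma(\nu)+Q$, and that $Z$ acts on $\Psi(M)$ with generalized central character $\chi_{\gamma(a\varepsilon_0)}$.

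For the first two conditions, I will exploit the fact that $\psi(\mathfrak h)\subset\mathcal H$, so the $\mathcal H$-generalized weight decomposition of $M$ refines the $\mathfrak h$-generalized weight decomposition of $\Psi(M)$: concretely, $\Psi(M)^{(\lambda)}=\bigoplus_{\gamma(\mu)=\lambda}M^{(\mu)}$. The key observation is that since $(E-a)$ acts locally nilpotently on $M$ and $E=\sum h_i$ has generalized eigenvalue $|\mu|$ on $M^{(\mu)}$, every $\mu\in\operatorname{supp}M$ satisfies $|\mu|=a$. The affine hyperplane $\{|\mu|=a\}$ is transverse to $\ker\gamma=\mathbb C(\varepsilon_0+\cdots+\varepsilon_n)$, so $\gamma$ restricts to a bijection between this hyperplane and $\mathfrak h^*$. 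Hence only one $\mu$ contributes to each $\Psi(M)^{(\lambda)}$, which immediately gives the boundedness of weight multiplicities. For the support, $\gamma$ sends each $\varepsilon_i-\varepsilon_{i+1}$ to a simple root, so
\[
\gamma\!\left(\nu+\sum_{i=0}^{n-1}\mathbb Z(\varepsilon_i-\varepsilon_{i+1})\right)=\gamma(\nu)+Q.
\]

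For the central character, I will use the fact (recalled in the introduction) that $\psi(Z)$ lies in the subalgebra of $\mathcal D$ generated by $E$; call the induced map $\psi^Z\colon Z\to\mathbb C[E]$. Since $(E-a)$ acts locally nilpotently on $M$, every $z\in Z$ acts on $\Psi(M)$ via $\psi^Z(z)$ with generalized eigenvalue $\chi_a(z):=\psi^Z(z)|_{E=a}$. It remains to identify $\chi_a$ with $\chi_{\gamma(a\varepsilon_0)}$. To do this, I will test on the family of modules $\mathbb C[t_0,\dots,t_n]_a$ of degree $a$ homogeneous polynomials for $a\in\mathbb Z_{\geq 0}$: via $\psi$, this is a simple highest weight $\mathfrak{sl}(n+1)$-module with highest weight vector $t_0^a$ of weight $\gamma(a\varepsilon_0)$, so its central character is $\chi_{\gamma(a\varepsilon_0)}$; on the other hand $E$ acts on it as the scalar $a$, so its central character is also $\chi_a$. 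Thus $\chi_a(z)=\chi_{\gamma(a\varepsilon_0)}(z)$ whenever $a\in\mathbb Z_{\geq 0}$. Since both sides are polynomial in $a$ for each fixed $z\in Z$ (by construction of $\psi^Z$ on the one hand, and the Harish-Chandra isomorphism on the other), the identity extends to all $a\in\mathbb C$.

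The routine bookkeeping is easy; the step requiring the most care is the transversality argument, which is what actually forces the $\mathcal H$-weight multiplicities to coincide with the $\mathfrak h$-weight multiplicities and thereby preserves boundedness. The identification of central characters by polynomial interpolation from integral values of $a$ is the main conceptual input.
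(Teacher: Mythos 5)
Your proof is correct and more detailed than the paper's own, which disposes of the support and boundedness claims with ``follows immediately from the definition'' (your transversality argument, showing $\gamma$ is injective on the hyperplane $\{|\mu|=a\}$ containing $\operatorname{supp}M$, is exactly what makes this immediate). Where you genuinely diverge is in identifying the central character. The paper exhibits, for \emph{arbitrary} $a\in\mathbb C$, the concrete highest weight module $\Psi\bigl(\Gamma_a(\mathcal S_{a\varepsilon_0}(\emptyset))\bigr)$ with highest weight vector $t_0^a$ of weight $\gamma(a\varepsilon_0)$, and reads off the central character from it. You instead verify the identity $\chi_a=\chi_{\gamma(a\varepsilon_0)}$ only for $a\in\mathbb Z_{\geq 0}$ using the familiar symmetric powers $\mathbb C[t_0,\dots,t_n]_a$, then upgrade to all $a\in\mathbb C$ by noting that both sides are polynomial in $a$ (via $\psi(z)\in\mathbb C[E]$ on one side and Harish--Chandra on the other). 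Both arguments are valid; the paper's is more direct and avoids the interpolation step, while yours has the small advantage of only invoking the standard finite-dimensional $\mathfrak{sl}(n+1)$-modules rather than having to check the highest weight property for the fractional-exponent module $\Gamma_a(\mathcal F_a\otimes\mathcal F_0^+\otimes\cdots\otimes\mathcal F_0^+)$.
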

\begin{proof} The inclusion 
$$\Psi (^{\rm b}(\mathcal D^E, \mathcal H)_{\nu}{\rm -mod})\subset \mathcal{GB}_{\gamma(\nu)}$$
follows immediately from the definition. Since
$\psi(Z(U(\mathfrak{sl}(n+1))))$ is contained in the center of $\mathcal D^E$ and the latter is generated by $E$,
$\Psi(M)$ admit the same central character for all $M\in ^{\rm b}(\mathcal D^E, \mathcal H)^{a}-{\rm mod}$. Furthermore, 
$\Psi(S_{a\varepsilon_0}(\emptyset))$ is a
highest weight module with highest weight $a\varepsilon_0$. Therefore 
$$\Psi (^{\rm b}(\mathcal D^E, \mathcal H)_{\nu}^{a}{\rm -mod})\subset \mathcal{GB}^{\gamma (a\varepsilon_0)}_{\gamma(\nu)}.$$
\end{proof}

In the case $\Gamma = \{ \alpha\}$ we will use the following notation: $D_{\alpha}^{x}  =  D_{\{e_{\alpha} \}}^{x}$ for $x \in {\mathbb C}$. 
\begin{lemma} \label{lemma-loc-psi}
If $M$ is a module in $(\mathcal D^E, \mathcal H){\rm -mod}$ and $x \in {\mathbb C}$, then $D_{\varepsilon_i - \varepsilon_j}^x \Psi (M) \simeq \Psi (D_{i,j}^x M)$.
\end{lemma}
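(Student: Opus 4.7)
The plan is to factor the localization into the untwisted localization followed by the conjugation twist, and verify compatibility with $\psi$ at each stage. Since $\psi(e_{\varepsilon_i-\varepsilon_j}) = \psi(E_{ij}) = t_i\partial_j$, both $D_{\varepsilon_i-\varepsilon_j}$ and $D_{i,j}$ invert the ``same'' element viewed through $\psi$. First I would check that $\psi$ extends to a ring homomorphism
\[
\bar\psi: D_{e_{ij}}U(\mathfrak{sl}(n+1)) \longrightarrow D_{t_i\partial_j}\mathcal D^E,
\]
which is automatic from the universal property of localization once one verifies Ore conditions on both sides, noting that $\operatorname{ad}(e_{ij})$ is locally nilpotent on $U(\mathfrak{sl}(n+1))$ and $\operatorname{ad}(t_i\partial_j)$ is locally nilpotent on $\mathcal D$ (hence on $\mathcal D^E$). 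From this it follows that at the level of untwisted localization,
\[
D_{\varepsilon_i-\varepsilon_j}\Psi(M) \simeq \Psi(D_{i,j}M),
\]
via the natural map $e_{ij}^{-k}\cdot m \mapsto (t_i\partial_j)^{-k}\cdot m$; this is a well-defined isomorphism of $U(\mathfrak{sl}(n+1))$-modules since the two $U$-actions are obtained by pulling back along $\bar\psi$.

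Next I would verify that $\bar\psi$ intertwines the generalized conjugation automorphisms: for every $u \in U(\mathfrak{sl}(n+1))$,
\[
\bar\psi\bigl(\Theta^x_{\varepsilon_i-\varepsilon_j}(u)\bigr) = \Theta^x_{i,j}\bigl(\bar\psi(u)\bigr).
\]
This is a direct consequence of the defining formula
\[
\Theta^x_{F}(u) = \sum_{k\geq 0}\binom{x}{k}\operatorname{ad}(f)^k(u)\,f^{-k},
\]
together with the fact that $\bar\psi$ is an algebra homomorphism sending $e_{ij}$ to $t_i\partial_j$ and therefore satisfies $\bar\psi\circ\operatorname{ad}(e_{ij}) = \operatorname{ad}(t_i\partial_j)\circ\bar\psi$. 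Once this is in hand, applying Lemma \ref{lemma-conj} one gets $\Psi\circ\Phi^x_{i,j} = \Phi^x_{\varepsilon_i-\varepsilon_j}\circ\Psi$ on the category of $D_{t_i\partial_j}\mathcal D^E$-modules, since $\Phi^x_F$ is defined by twisting the action by $\Theta^x_F$ on the same underlying vector space.

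Combining the two observations then yields
\[
D_{\varepsilon_i-\varepsilon_j}^x\Psi(M) = \Phi^x_{\varepsilon_i-\varepsilon_j}D_{\varepsilon_i-\varepsilon_j}\Psi(M) \simeq \Phi^x_{\varepsilon_i-\varepsilon_j}\Psi(D_{i,j}M) \simeq \Psi(\Phi^x_{i,j}D_{i,j}M) = \Psi(D_{i,j}^xM),
\]
as desired. The only mildly delicate point is the compatibility of the twists in the second step; it is really just bookkeeping with the series defining $\Theta^x_F$, but one has to confirm that the sums converge in the sense that on any fixed element they are finite (which follows from local nilpotence of $\operatorname{ad}(e_{ij})$ and of $\operatorname{ad}(t_i\partial_j)$). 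The other steps are essentially formal consequences of the universal property of localization and the fact that $\Psi$ is defined by restriction along $\psi$.
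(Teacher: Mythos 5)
Your proof is correct and takes essentially the same approach as the paper: the paper simply exhibits the map $e_{\varepsilon_i-\varepsilon_j}^x\cdot m\mapsto (t_i\partial_j)^x\cdot m$, and your argument is a careful expansion of why that map is well-defined and an isomorphism, via the factorization into untwisted localization and the conjugation twist. The verifications of the Ore condition, the extension $\bar\psi$ to the localized rings, and the intertwining of the $\Theta^x$ automorphisms are exactly the details being suppressed in the one-line proof.
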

\begin{proof}
The map $e_{\varepsilon_i - \varepsilon_j}^x \cdot m \mapsto (t_i \partial_j)^x \cdot m$ provides the desired isomorphism.
\end{proof}

\begin{remark} \label{rem-loc-inv}
{\it Using the above lemma and the fact that the functor  $\Psi:{}^{\rm b}(\mathcal D^E, \mathcal H){\rm -mod}\to \mathcal{GB}$ is surjective 
(which follows from Theorem \ref{connmain}), we can prove that for $\Gamma = \{ \alpha_1,...,\alpha_k\}$, 
the localization functor $D_{\Gamma}^{\bf x}$ on $\mathcal{GB}$ depends only on ${\rm Span} \Gamma$ and the weight $\sum_{i=1}^k x_i\alpha_i$. 
More precisely, if  $\Gamma' = \{ \beta_1,...,\beta_{\ell} \}$ is another set of commuting roots of $\mathfrak{sl} (n+1)$ such that 
${\rm Span}_{\mathbb C} \Gamma =  {\rm Span}_{\mathbb C} \Gamma'$ and if ${\bf y} \in {\mathbb C}^{\ell}$ be such that 
$\sum_{i=1}^k x_i \alpha_i =\sum_{i=1}^{\ell} y_i \beta_i$, then $D_{\Gamma}^{\bf x}M \simeq D_{\Gamma'}^{\bf y}M$ for every $M$ in ${\mathcal GB}$. 

Here is a short proof of this statement.  Denote $I= \{ i \; | \; \varepsilon_i - \varepsilon_j \in \Gamma, \mbox{ for some } j\}$, and $J = \{ j \; | \; \varepsilon_i - \varepsilon_j \in \Gamma, \mbox{ for some } i\}$ and define $I'$ and $J'$ in a similar way from $\Gamma'$. Then ${\rm Span}_{\mathbb C} \Gamma =  {\rm Span}_{\mathbb C} \Gamma'$ implies that $I = I'$ and $J = J'$. Let now $M=\Psi (N)$. Applying multiple times Lemma \ref{local-com} and Lemma \ref{lemma-loc-psi} we obtain that 
$$
D_{\Gamma}^{\bf x}M \simeq D_{\Gamma}^{\bf x} \Psi (N) \simeq  \Psi \left( \prod_{i \in I} D_i^{z_i,+} \prod_{j\in J}D_{j}^{z_j,-}(N)\right ) \simeq  D_{\Gamma'}^{\bf y} \Psi (N) \simeq D_{\Gamma'}^{\bf y}M, 
$$
where $\sum_k x_k \alpha_k = \sum_{i \in I} z_i \varepsilon_i - \sum_{j \in J} z_j \varepsilon_j$.}

\end{remark}

\subsection{The generic case}\label{subsec-gen-case}
\begin{lemma}\label{conn2}  Suppose  $a \notin \mathbb Z$, or
$a=-1,...,-n$ for $n\geq 2$ ($a$ is arbitrary for $n=1$). 
Any simple 
$S\in \mathcal{GB}^{\gamma (a\varepsilon_0)}$ is isomorphic to $\Psi(S')$
for some simple $S'\in {}^{\rm b}(\mathcal D^E, \mathcal H)_{\nu}^{a}{\rm -mod}$.
\end{lemma}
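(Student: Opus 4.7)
The plan is to apply Proposition \ref{tloc-props} to reduce to the case when $S$ is a simple highest weight module, to identify such a highest weight module as $\Psi$ applied to a simple $(\mathcal D^E,\mathcal H)^a$-module, and then to commute $\Psi$ with the twisted localization. By Proposition \ref{tloc-props}, we may write $S \simeq D_\Gamma^{\bf x} L$ where $L$ is a simple bounded highest weight module, $\Gamma$ is a set of $n$ commuting roots of $\mathfrak{sl}(n+1)$, and ${\bf x} \in {\mathbb C}^n$. Since $\psi(Z(U))$ is contained in the subalgebra of $\mathcal D^E$ generated by $E$ and hence commutes with every $e_\alpha$, the functor $D_\Gamma^{\bf x}$ preserves the generalized central character; thus $L \in \mathcal{GB}^{\gamma(a\varepsilon_0)}$.

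The key step is to exhibit, under the hypothesis on $a$, a simple $N_0 \in {}^{\rm b}(\mathcal D^E,\mathcal H)^a_{\mu_0}$-mod with $\Psi(N_0) \simeq L$. In the nonintegral case $a \notin {\mathbb Z}$, Corollary \ref{twistednoninteq} provides the equivalence $(\mathcal D,\mathcal H)_{\mu_0}$-mod $\simeq (\mathcal D^E,\mathcal H)^a_{\mu_0}$-mod, and for a suitable $\mu_0$ with $|\mu_0|=a$ and $\mu_0\notin{\mathbb Z}^{n+1}$ we take $N_0 = \Gamma_a(\mathcal S_{\mu_0}(\emptyset))$; the resulting $\Psi(N_0)$ is the unique bounded simple highest weight module in its Weyl chamber. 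In the singular integral case $a\in\{-1,\ldots,-n\}$ with $n\geq 2$, we use the list of simples $\mathcal S^a_0(J)$ from Section \ref{subsec-deha}: the $W$-orbit of $a\varepsilon_0 + \rho$ parametrizes the bounded simple highest weight modules in the block, and each such module is realized by $\Psi(\mathcal S^a_0(J))$ for an appropriately chosen proper nonempty subset $J\subset\{0,\ldots,n\}$. For $n=1$ only the first construction is needed.

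Combining the above with Lemma \ref{lemma-loc-psi} and the commutation relations in Lemma \ref{local-com}, we obtain
$$
S \simeq D_\Gamma^{\bf x} \Psi(N_0) \simeq \Psi(N'),
$$
where $N'$ is an iterated twisted localization of $N_0$ by commuting elements $t_i\partial_j \in \mathcal D^E$ corresponding to the roots in $\Gamma$. Lemma \ref{lemma-conj}(v) gives that $N'$ is simple; since the $E$-eigenvalue is preserved under each $D_{i,j}^x$, we have $N' \in {}^{\rm b}(\mathcal D^E,\mathcal H)^a_\nu$-mod for some $\nu$ with $|\nu|=a$. Setting $S' := N'$ finishes the argument. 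The principal difficulty lies in the identification step: it requires showing that the hypothesis on $a$ forces every bounded simple highest weight module in $\mathcal{GB}^{\gamma(a\varepsilon_0)}$ to match an explicit simple from Sections 5--6, which rests on the classification of bounded highest weight modules and on the fact that in these cases all simples in the block share a single annihilator.
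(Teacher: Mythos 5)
Your proposal takes a genuinely different route for the final step than the paper does, and both routes share the same crucial identification. You both begin with Proposition~\ref{tloc-props} to write $S\simeq D_\Gamma^{\bf x}L$ for a simple bounded highest weight $L$. Where the proofs diverge is in how one passes from ``$L$ is in the image of $\Psi$'' to ``$S$ is in the image of $\Psi$.'' You explicitly commute $\Psi$ with the twisted localization using Lemmas~\ref{lemma-loc-psi} and~\ref{local-com}, producing $N'$ with $S\simeq\Psi(N')$; this is valid and has the virtue of exhibiting $S'$ explicitly. The paper instead invokes the primitive-ideal shortcut: twisted localization preserves the annihilator of a simple bounded module, so once $L=\Psi(S_{a\varepsilon_0}(\emptyset))$ is known (hence $\operatorname{Ann}_U(L)\supseteq\ker\psi$), every simple $S$ in the block is likewise annihilated by $\ker\psi$, and thus factors through $\mathcal D^E$. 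The paper's route avoids having to track the resulting localization at all.

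There is, however, a point in your identification step that needs to be fixed. You write that in the nonintegral case one takes ``a suitable $\mu_0$ with $|\mu_0|=a$ and $\mu_0\notin\mathbb Z^{n+1}$'' and sets $N_0=\Gamma_a(\mathcal S_{\mu_0}(\emptyset))$, asserting $\Psi(N_0)$ is the bounded highest weight module. That assertion only holds for a very specific choice such as $\mu_0=a\varepsilon_0$ (which does satisfy $\mu_0\notin\mathbb Z^{n+1}$ when $a\notin\mathbb Z$, since then ${\rm Int}(\mu_0)=\{1,\dots,n\}$). For a generic $\mu_0$ with ${\rm Int}(\mu_0)=\emptyset$, the module $\mathcal S_{\mu_0}(\emptyset)=\mathcal F_{\mu_0}$ is the cuspidal object and $\Psi(\Gamma_a(\mathcal F_{\mu_0}))$ is cuspidal, not highest weight. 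The citation of Corollary~\ref{twistednoninteq} is also beside the point here: what is actually required is the uniqueness (under the hypothesis on $a$) of the simple bounded highest weight module in the block, which both you and the paper use and which the paper signals through the proof of Lemma~\ref{conn1}. As you note at the end, that uniqueness ultimately rests on the coincidence of annihilators for the given central character; but once you grant that fact, the paper's argument reaches the conclusion directly, without the explicit commutation machinery.
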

\begin{proof} By Proposition \ref{tloc-props}, every simple $S$ in $\mathcal{GB}^{\gamma (a\varepsilon_0)}$ is isomorphic to $D_\Gamma^{\bf x}L$, where $L$ is a simple highest weight module in $ \mathcal{GB}^{\gamma (a\varepsilon_0)}$, $\Gamma$ is a commuting set of roots and ${\bf x} \in {\mathbb C}^n$. 
As mentioned in the proof of Lemma \ref{conn1},  $L=\Psi(S_{a\varepsilon_0}(\emptyset))$. Therefore the annihilators in 
$U(\mathfrak{sl}(n+1))$ of all simple objects
 $S$ in $\mathcal{GB}^{\gamma (a\varepsilon_0)}$ coincide. So, every such $S$ is annihilated by the kernel of $\psi$,  and thus $S=\Psi(S')$ for some  
$S'\in {}^{\rm b}(\mathcal D^E, \mathcal H)_{\nu}^{a}{\rm -mod}$.
\end{proof}
We use the notation $\mathbb C^{k}[u_0,\dots,u_n]$ and  
$\mathbb C^{<k}[u_0,\dots,u_n]$ for the
polynomials of degree $k$,  and of degree strictly less than $k$, respectively.

\begin{lemma}\label{lemma-ind-log}  Let ${\rm Int}(\nu)=\emptyset$,  
$n\geq 2$ or $a \neq -1$. 

(a) $\Psi(\Gamma_a(\mathcal F_{\nu}))$ is the unique simple submodule 
of $\Psi(\Gamma_a(\mathcal F^{\log}_{\nu}))$. 

(b) For any positive $k$ 
$${\rm soc}^k \Psi(\Gamma_a(\mathcal
F^{\log}_{\nu})) \simeq \Psi(\Gamma_a(\mathcal F^{\log, <k}_{\nu})),$$
where 
$\mathcal F^{\log, <k}_{\nu}=\mathcal F_{\nu}\otimes\mathbb C^{<k}[u_0,\dots,u_n]$.
\end{lemma}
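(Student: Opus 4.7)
The plan is to produce explicit commuting, locally nilpotent $\mathcal D$-endomorphisms of $\mathcal F^{\log}_\nu$ whose joint kernels cut out the filtration $\mathcal F^{\log,<k}_\nu$, transfer them through the exact functors $\Gamma_a$ and $\Psi$, and then read off the socle filtration of $M:=\Psi(\Gamma_a(\mathcal F^{\log}_\nu))$ by a Schur-type argument. Concretely, for each $i$ I set $D_i:=\operatorname{id}\otimes \partial/\partial u_i$ on $\mathcal F^{\log}_\nu=\mathcal F_\nu\otimes\mathbb C[u_0,\dots,u_n]$; using $\partial_j u_i=\delta_{ij}t_i^{-1}$ one checks directly that $D_i$ commutes with every $t_j$ and every $\partial_j$, so $D_i\in\operatorname{End}_{\mathcal D}(\mathcal F^{\log}_\nu)$. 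These operators pairwise commute, are locally nilpotent, and by construction satisfy
\[
\mathcal F^{\log,<k}_\nu=\bigcap_{|{\bf m}|\geq k}\ker\bigl(D_0^{m_0}\cdots D_n^{m_n}\bigr).
\]
Setting $M_k:=\Psi(\Gamma_a(\mathcal F^{\log,<k}_\nu))$, the $D_i$'s descend to commuting locally nilpotent $\mathfrak{sl}(n+1)$-endomorphisms of $M$, the same polynomial annihilation conditions carve $M_k$ out of $M$, and $M_k/M_{k-1}\simeq M_1^{\oplus\dim\mathbb C^{k-1}[u_0,\dots,u_n]}$ with $M_1=\Psi(\Gamma_a(\mathcal F_\nu))=\bigcap_i\ker(D_i|_M)$.

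The next step is to verify that $M_1$ is a simple $\mathfrak{sl}(n+1)$-module. Because ${\rm Int}(\nu)=\emptyset$, the module $\mathcal F_\nu$ is simple over $\mathcal D$ with one-dimensional weight spaces $\mathcal F_\nu^\mu=\mathbb C\, t^\mu$, and the identity $\psi(E_{ij})\, t^\mu=t_i\partial_j\, t^\mu=\mu_j\, t^{\mu+\varepsilon_i-\varepsilon_j}$ displays every non-Cartan root vector as an isomorphism between adjacent weight spaces, since $\mu_j\in \nu_j+\mathbb Z$ is nonzero when $\nu_j\notin\mathbb Z$. Hence $M_1$ has no proper nonzero submodule, and consequently every subquotient $M_k/M_{k-1}$ is semisimple. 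The Schur step now proves (a): if $N\subseteq M$ is a simple $\mathfrak{sl}(n+1)$-submodule, then each $\mathfrak{sl}(n+1)$-linear map $D_i|_N\colon N\to M$ has image a quotient of $N$, so is either zero or injective; injectivity would contradict the local nilpotence of $D_i$ on $M$, so $D_i|_N=0$ for every $i$, giving $N\subseteq \bigcap_i\ker D_i=M_1$. Together with the simplicity of $M_1$, this yields $\operatorname{soc}M=M_1$.

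For (b) I induct on $k$. The induced operators $\bar D_i$ on $M/M_k$ remain $\mathfrak{sl}(n+1)$-linear and locally nilpotent; rewriting any ${\bf m}$ with $|{\bf m}|\geq k+1$ as $\varepsilon_i+{\bf m}'$ with $|{\bf m}'|\geq k$ immediately gives $\bigcap_i\ker\bar D_i=M_{k+1}/M_k$. The same Schur argument then forces $\operatorname{soc}(M/M_k)\subseteq M_{k+1}/M_k$, and the reverse inclusion is automatic from semisimplicity of $M_{k+1}/M_k$. Hence $\operatorname{soc}^{k+1}M=M_{k+1}$, completing the induction. The main obstacle, in my view, is proving that $M_1$ is simple as an $\mathfrak{sl}(n+1)$-module and not merely as a $\mathcal D^E$-module: the argument depends entirely on the nonvanishing of the shift coefficients $\mu_j$, which is exactly the content of the hypothesis ${\rm Int}(\nu)=\emptyset$, while the supplementary condition $n\geq 2$ or $a\neq -1$ is needed to ensure that the ambient block is well-behaved and no small-rank singular coincidence collapses distinct subquotients of the filtration.
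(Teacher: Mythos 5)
Your operators $D_i=\operatorname{id}\otimes\partial/\partial u_i$ are genuinely $\mathcal D$-linear, they preserve $\Gamma_a$, and they descend to commuting, locally nilpotent $\mathfrak{sl}(n+1)$-endomorphisms of $M:=\Psi(\Gamma_a(\mathcal F^{\log}_\nu))$ with $\bigcap_i\ker D_i=\Psi(\Gamma_a(\mathcal F_\nu))$; the simplicity of that joint kernel via the nonvanishing shift coefficients is also fine. But the Schur step is wrong. Knowing that $D_i|_N\colon N\to M$ is $\mathfrak{sl}(n+1)$-linear with simple source only tells you that $D_i(N)$ is either zero or \emph{isomorphic} to $N$; it does not say $D_i(N)\subseteq N$. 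If $D_i(N)$ is a different simple submodule of $M$, the iterates $D_i^k v$ leave $N$ after one step, and the local nilpotence of $D_i$ on $M$ never produces a nonzero element of $\ker(D_i|_N)$. So ``injectivity of $D_i|_N$ contradicts local nilpotence'' is simply false. This is not hypothetical: Remark~\ref{specialcase} exhibits, for $n=1$, $a=-1$, a simple submodule $N$ spanned by $t_0^xt_1^{-1-x}\log(t_0t_1)+\varphi(x)t_0^xt_1^{-1-x}$, $x\in c+\mathbb Z$; one checks $D_0 w_x=t_0^xt_1^{-1-x}$, so $D_0|_N$ is injective, $D_0(N)=\Psi(\Gamma_{-1}(\mathcal F_\nu))$ is the \emph{other} simple submodule, and $D_0^2|_N=0$ while $D_0|_N\neq 0$. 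The socle is strictly larger than $M_1$ there, directly falsifying your conclusion.

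The symptom of the gap is that your argument never uses the hypothesis $n\geq 2$ or $a\neq -1$ except in a closing heuristic sentence, whereas the lemma is false without it. The paper's proof uses it in an essential way: every simple object of $\mathcal{GB}^{\gamma(a\varepsilon_0)}_{\gamma(\nu)}$ is a weight module, so any simple submodule of $M$ contains a weight vector $m=f(u_0,\dots,u_n)t^\lambda$; since $t_i\partial_i-t_j\partial_j$ must act on $m$ by scalars and $(\partial_{u_i}-\partial_{u_j})$ lowers degree, one gets $(\partial_{u_i}-\partial_{u_j})f=0$, i.e.\ $f$ depends only on $u=u_0+\dots+u_n$. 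Hence the submodule lies in $S_{\mathfrak h}(M)=\Psi(\Gamma_a(\mathcal F_\nu\otimes\mathbb C[u]))$, and the uniqueness of the simple socle of the latter is Lemma~5.4 of \cite{GS2} — exactly where $n\geq 2$ or $a\neq -1$ enters. You would need to supply a replacement for that step (and correspondingly for the inductive step in (b), which inherits the same flaw); as it stands, the proposal does not prove the lemma.
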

\begin{proof}
(a) Assume that $\Psi(\Gamma_a(\mathcal F^{\log}_{\nu}))$ has a
simple submodule $M$ distinct from  $\Psi(\Gamma_a({\mathcal
F}_\nu))$. Every simple module in
$\mathcal{GB}^{\gamma(a\varepsilon_0)}_{\gamma(\nu)}$ is a weight
module. 
Let us fix a weight vector $m = f(u_0,...,u_n)t_0^{\lambda_0}...t_n^{\lambda_n}$
in $M$, where $f(u_0,...,u_n) \in {\mathbb C}[u_0,...,u_n]$ and $u_i = \log
t_i$. 
Since $t_i \partial_i - t_j \partial_j$ act as a scalar multiple on 
$m$ we easily conclude that $f(u_0,\dots,u_n) = g(u_0+...+u_n)$ for some
polynomial $g$. Thus, we have that 
$$S_{\mathfrak{h}}(\Psi(\Gamma_a(\mathcal F^{\log}_{\nu})))=
\Psi(\Gamma_a(\mathcal F_{\nu}\otimes\mathbb C[u])),$$
where $u = u_0+...+u_n$ and $S_{\mathfrak{h}}$ stands for the functor $\mathcal{GB}^{\gamma (a \varepsilon_0)} \to  \mathcal{B}^{\gamma (a \varepsilon_0)}$ mapping a module to its submodule consisting of all $\mathfrak{h}$-eigenvectors. Now, the statement follows from Lemma 5.4 of \cite{GS2}. 

(b) We apply induction on $k$. The base case $k=1$ follows from part
(a). Assume that the statement holds for all $k < \ell$. 
In the case $k= \ell$ consider the quotient $N= \Psi(\Gamma_a(\mathcal
F^{\log}_{\nu}))/ \mbox{soc}^{\ell} \Psi(\Gamma_a(\mathcal
F^{\log}_{\nu}))$. By the same argument as in (a) 
$$S_{\mathfrak{h}}(N)=\Psi(\Gamma_a(\mathcal F_{\nu}\otimes\mathbb
C^{\ell}[u_0,\dots, u_n]\otimes\mathbb C[u])).$$
Therefore $S_{\mathfrak{h}}(N)$ is isomorphic to the direct sum of $\binom{n+k}{k}$
copies of $\Psi(\Gamma_a(\mathcal F_{\nu}\otimes\mathbb C[u]))$ and
the socle of  $S_{\mathfrak{h}}(N)$ coincides with 
$\Psi(\Gamma_a(\mathcal F_{\nu}\otimes\mathbb C^{\ell}[u_0,\dots, u_n]))$
by (a).
\end{proof}

\begin{remark}\label{specialcase} Let us explain why Lemma \ref{lemma-ind-log} is false in the case $n=1,a=-1$.
Indeed, let $\nu=c\varepsilon_0+(-1-c)\varepsilon_1$ for some $c\in\mathbb C\setminus\mathbb Z$. Let $\varphi:c+\mathbb Z\to\mathbb C$ 
be a function satisfying the condition $\varphi(x+1)-\varphi(x)=\frac{-1}{x+1}$ for all $x\in c+\mathbb Z$. Then it is easy to check
that the elements $t_0^xt_1^{-1-x}\log{t_0t_1}+\varphi(x)t_0^xt_1^{-1-x}$ for  $x\in c+\mathbb Z$ span a simple submodule of $\Psi(\Gamma_a(\mathcal F^{\log}_{\nu}))$.
On the other hand, $\Psi(\Gamma_a(\mathcal F_{\nu}))$ is another simple submodule in $\Psi(\Gamma_a(\mathcal F^{\log}_{\nu}))$. Hence 
$\Psi(\Gamma_a(\mathcal F^{\log}_{\nu}))$ does not have a simple socle.
\end{remark}

\begin{theorem}\label{connmain}  Assume that $a \notin \mathbb Z$ or
$a=-1,...,-n$ for $n\geq 2$ and  $a\neq -1$ for $n=1$. 
Then $\Psi$ provides equivalence between 
$^{\rm b}(\mathcal D^E, \mathcal H)^{a}-{\rm mod}$ 
and $\mathcal{GB}^{\gamma (a\varepsilon_0)}$ and between
$(\mathcal D^E, \mathcal H)^{a}-{\rm mod}$ and  
$\overline{\mathcal{GB}}^{\gamma (a\varepsilon_0)}$.

Moreover, $\Psi$ provides equivalence between $_{\rm s}^{\rm b}(\mathcal D^E, \mathcal H)^{a}-{\rm mod}$ and  
$\mathcal{B}^{\gamma (a\varepsilon_0)}-{\rm mod}$, as well as,  between $_{\rm s}(\mathcal D^E, \mathcal H)^{a}-{\rm mod}$ and  
$\overline{\mathcal{B}}^{\gamma (a\varepsilon_0)}-{\rm mod}$.
\end{theorem}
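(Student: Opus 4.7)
The plan is to establish the equivalences block by block. Fix $\nu$ with $|\nu|=a$ and work in the blocks $^{\rm b}(\mathcal D^E,\mathcal H)^a_\nu$-mod and $\mathcal{GB}^{\gamma(a\varepsilon_0)}_{\gamma(\nu)}$. By Corollary \ref{injco}, the former has an injective cogenerator $\mathcal R^a_0$ (and its analogue $\mathcal R^a_\nu$ for arbitrary $\nu$), so by Theorem \ref{antieq} it is antiequivalent to the category of finite-dimensional modules over $\mathcal A^a_\nu:=\operatorname{End}_{\mathcal D^E}(\mathcal R^a_\nu)$. The aim is to prove that $\Psi(\mathcal R^a_\nu)$ is an injective cogenerator in $\mathcal{GB}^{\gamma(a\varepsilon_0)}_{\gamma(\nu)}$ with endomorphism ring canonically isomorphic to $\mathcal A^a_\nu$; then a Gabriel-type argument on both sides yields the equivalence.

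First I would verify essential surjectivity on simples using Lemma \ref{conn2}, and note that distinct simples $\mathcal S^a_\nu(J)$ map to distinct simples in $\mathcal{GB}^{\gamma(a\varepsilon_0)}_{\gamma(\nu)}$, because their supports (as weight modules) are distinct. Next, using Lemma \ref{lemma-ind-log}, I would show that each $\Psi(\Gamma_a(\mathcal I_\nu(J)))$ has simple socle $\Psi(\Gamma_a(\mathcal S_\nu(J)))$ and that its socle filtration exhausts the module; by the hypothesis on $a$ (which excludes the pathology of Remark \ref{specialcase}), this identifies $\Psi(\Gamma_a(\mathcal I_\nu(J)))$ as the injective hull of $\Psi(\Gamma_a(\mathcal S_\nu(J)))$ in $\overline{\mathcal{GB}}^{\gamma(a\varepsilon_0)}_{\gamma(\nu)}$. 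Therefore $\Psi(\mathcal R^a_\nu)$ is an injective cogenerator there.

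Then I would prove the key ring isomorphism $\mathcal A^a_\nu\xrightarrow{\sim}\operatorname{End}_{\mathfrak{sl}(n+1)}(\Psi(\mathcal R^a_\nu))$. Injectivity is immediate since $\Psi$ only changes the action, not the underlying vector space. For surjectivity, the point is that under the standing hypothesis on $a$, every simple subquotient of $\Psi(\mathcal R^a_\nu)$ is annihilated by $\ker\psi$, so the $U(\mathfrak{sl}(n+1))$-action on $\Psi(\mathcal R^a_\nu)$ factors through $\psi(U(\mathfrak{sl}(n+1)))$; together with the $E$-action (determined by $a$ plus a nilpotent computed from the generalized central character) this recovers the full $\mathcal D^E$-action, so any $\mathfrak{sl}$-endomorphism is a $\mathcal D^E$-endomorphism. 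An analogous argument for any pair of modules shows that $\Psi$ is fully faithful on the block. Combined with essential surjectivity of simples, an induction on the Jordan--H\"older length (which is finite by Lemma 3.3 of \cite{M}) shows $\Psi$ is essentially surjective on $\mathcal{GB}^{\gamma(a\varepsilon_0)}_{\gamma(\nu)}$, completing the equivalence on bounded blocks.

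The remaining statements would follow formally. To pass to $\overline{\mathcal{GB}}^{\gamma(a\varepsilon_0)}$ and $(\mathcal D^E,\mathcal H)^a$-mod, I would use that both sides are the ind-completions of their respective bounded subcategories (via direct limits of finitely generated submodules), and that $\Psi$ commutes with direct limits. For the semisimple-$\mathcal H'$ versions, I would apply the functor $S_{\mathcal H'}$ and use that $\Psi$ maps $_{\rm s}^{\rm b}(\mathcal D^E,\mathcal H)^a$-mod into $\mathcal B$ since $\psi$ sends $\mathfrak h\subset\mathfrak{sl}(n+1)$ into $\mathcal H'$; the same argument restricted to these subcategories gives the weight-module equivalence. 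The main obstacle I anticipate is the fullness claim in the second paragraph: the formal statement that $\ker\psi$ annihilates every module in the relevant block (not merely the simples) under the hypothesis on $a$ requires a careful Ext argument showing $\mathrm{Ext}^1$ between two modules annihilated by $\ker\psi$ is again annihilated by $\ker\psi$, which is where the exclusion of regular integral central characters is essential and where Remark \ref{specialcase} shows the argument must be delicate.
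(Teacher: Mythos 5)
Your outline is recognizably the paper's strategy: use Gabriel duality (Theorem \ref{antieq}), exhibit $\Psi(\mathcal R^a_\nu)$ as an injective cogenerator of the target block, and identify endomorphism rings. But the mechanism by which you propose to get injectivity of each $\Psi(\Gamma_a(\mathcal I_\nu(J)))$ has a real hole. You invoke Lemma \ref{lemma-ind-log}, but that lemma is stated and proved only under the hypothesis ${\rm Int}(\nu)=\emptyset$, i.e.\ for the cuspidal $\mathcal F_\nu^{\log}=\mathcal I_\nu(\emptyset)$; it says nothing about $\mathcal I_\nu(J)$ with $J\neq\emptyset$ or ${\rm Int}(\nu)\neq\emptyset$. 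The paper's route to general $J$ is the twisted localization argument you never mention: Corollary \ref{cor-dij} writes $\mathcal F^{\log}_\nu(J)\simeq\prod_{(i,j)\in S_J}D_{i,j}^{z(i,j)}\mathcal F^{\log}_\mu$ with ${\rm Int}(\mu)=\emptyset$, Lemma \ref{lemma-loc-psi} intertwines $D_{i,j}^x$ through $\Psi$ with $D_{\varepsilon_i-\varepsilon_j}^x$, and Lemma \ref{conninj} then transfers exactness of $\operatorname{Hom}(\cdot,\Psi(\Gamma_a(\mathcal F^{\log}_\mu)))$ (the cuspidal case) to $\operatorname{Hom}(\cdot,\Psi(\Gamma_a(\mathcal F^{\log}_\nu(J))))$ via Lemma \ref{lem-loc-map}(iv) and Lemma \ref{lemma-conj}(vi). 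Without this step you have no access to the non-cuspidal injective hulls. Related, but separate: even in the cuspidal case, ``simple socle and exhaustive socle filtration'' does not by itself imply injectivity. Lemma \ref{conncusp}(a) gets the conclusion by embedding $\Psi(\Gamma_a(\mathcal F^{\log}_\nu))$ into the already-known indecomposable injective of the cuspidal block --- whose structure comes from the identification of $\overline{\mathcal{GB}}^{\gamma(a\varepsilon_0)}_{\gamma(\nu)}$ with locally nilpotent $\mathbb C[z_0,\dots,z_n]$-modules in \cite{MS} --- and then matching socle lengths at each level using Lemma \ref{lemma-ind-log}(b). That input is indispensable and absent from your proposal.

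The fullness step you flag as the ``main obstacle'' is indeed where your route most visibly diverges and is incomplete. Your argument is: all simple subquotients of a module in the block are killed by $\ker\psi$ (Lemma \ref{conn2}), hence the whole module is a $\psi(U)$-module, hence a $\mathcal D^E$-module (after reconstructing the $E$-action from the generalized central character), hence any $\mathfrak{sl}(n+1)$-endomorphism is a $\mathcal D^E$-endomorphism. The first ``hence'' is false in general --- a module whose simple subquotients are killed by an ideal $I$ is killed only by a power of $I$ --- and the step of reconstructing a canonical $E$-action from the $Z$-action requires unramifiedness of the map $\operatorname{Spec}\mathbb C[E]\to\operatorname{Spec}Z$ at $a$, which you have not checked and which is precisely where the distinction between regular and singular/nonintegral characters matters. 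The paper avoids this line entirely: once the twisted localization machinery establishes that $\Psi$ carries the injective cogenerator $\mathcal R^a_\nu$ to an injective cogenerator and simples bijectively to simples (with matching socle-filtration lengths inherited from the cuspidal case), the comparison of the two applications of Theorem \ref{antieq} closes the loop without any ``annihilated by $\ker\psi$'' claim for non-semisimple modules. Your proposal, as written, replaces the paper's concrete localization mechanism with a speculative ideal-theoretic argument that you yourself do not complete, so the proof is not established.
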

\begin{proof} 
We prove the theorem in several steps. First, we  prove it  for the cuspidal blocks.
\begin{lemma}\label{conncusp} Assume that $a \notin \mathbb Z$ or
$a=-1,...,-n$ for $n\geq 2$ and  $a\neq -1$ for $n=1$.  Let ${\rm Int}(\nu)=\emptyset$.

(a) $\Psi(\Gamma_a(\mathcal F^{\log}_{\nu}))$ is an indecomposable
injective in $\overline {\mathcal{GB}}$.

(b) $\Psi$ provides equivalence between 
$^{\rm b}(\mathcal D^E, \mathcal H)^{a}_{\nu}-{\rm mod}$ 
and $\mathcal{GB}^{\gamma (a\varepsilon_0)}_{\gamma(\nu)}$, and between
$(\mathcal D^E, \mathcal H)^{a}_{\nu}-{\rm mod}$ 
and $\overline{\mathcal{GB}}^{\gamma (a\varepsilon_0)}_{\gamma(\nu)}$.
\end{lemma}
\begin{proof} 
(a) Both categories $(\mathcal D^E, \mathcal H)^{a}_{\nu}-{\rm mod}$ 
and $\overline{\mathcal{GB}}^{\gamma (a\varepsilon_0)}_{\gamma(\nu)}$ 
have one up to isomorphism simple module
$S=\Gamma_a(\mathcal F_\nu)$ and $\Psi(S)$, respectively. 
As follows from \cite{MS},  
$\overline{\mathcal{GB}}^{\gamma (a\varepsilon_0)}_{\gamma(\nu)}$
is equivalent to the category of locally nilpotent 
$\mathbb C[z_0,\dots,z_n]$-modules. Let $J$ be the indecomposable
injective in $\overline{\mathcal{GB}}^{\gamma(a\varepsilon_0)}_{\gamma(\nu)}$.
The length of $\mbox{soc}^k(J)$ is the same as the length of 
 $\mbox{soc}^k (\Psi(\Gamma_a(\mathcal F^{\log}_{\nu})))$. On the
 other hand, we have an injective homomorphism
$j:\Psi(\Gamma_a(\mathcal F^{\log}_{\nu}))\to J$ induced by the
embedding $\Psi(S)\to J$. Since the terms of the socle filtration of
$J$ and $\Psi(\Gamma_a(\mathcal F^{\log}_{\nu}))$ have the same
length, $j$ is an isomorphism.

Part (b) follows  from (a).
\end{proof}

\begin{lemma}\label{conninj} Assume that $\nu\in\mathbb C^{n+1}$ and let 
$J\in\mathcal P(\nu)$ be such that $J\neq\emptyset,\{0,\dots,n\}$ whenever 
$a \in \mathbb Z$ and $\nu\in\mathbb Z^{n+1}$.
Then the functor $\operatorname{Hom}_{\mathfrak g}(\cdot,\Psi(\Gamma_a(\mathcal F^{\log}_\nu(J))))$
is exact on modules in the category $\mathcal{GB}^{\gamma(a\varepsilon_0)}$.
\end{lemma}
\begin{proof} Let us first fix $\mu$ with  $\rm{Int}(\mu) = \emptyset$ and such that $\mu_0 + ... + \mu_n = \nu_0 + ...+ \nu_n$. We use  Corollary \ref{cor-dij} and find a subset $S_J$ of $\{ (i,j) \; | \; i \notin J, j \in J\}$ that satisfy (i)-(iii) of that corollary. In particular, 
$$
\mathcal F^{\log}_{\nu} (J) \simeq \prod_{(i,j) \in S_J} D_{i,j}^{z(i,j)} \mathcal F^{\log}_{\mu},
$$
where 
$z(i,j)$ are such that  $\sum_{s = 1}^{\ell} z(r,s) = \nu_{i_r} - \mu_{i_r}, \; \sum_{r = 1}^{k} z(r,s) =  \mu_{j_s} - \nu_{j_s}$. The conditions (i) and (ii) of orollary \ref{cor-dij} imply that $\Sigma_J  := \{ \varepsilon_i - \varepsilon_j \; | \; (i,j) \in S_J\}$ is a  set of $n$ linearly independent commuting roots of $\mathfrak{sl} (n+1)$. Also, as one can easily show, the functor $\Gamma_a$ commutes with the twisted localization functors $D_{i,j}^x$.  Hence
$$
\Psi ( \Gamma_ a (\prod_{(i,j) \in S_J} D_{i,j}^{z(i,j)} \mathcal F^{\log}_{\mu})) \simeq  \Psi (\prod_{(i,j) \in S_J} D_{i,j}^{z(i,j)}( \Gamma_a ( {\mathcal  F}^{\log}_{\mu})) ) \simeq {D}_{\Sigma_J}^{\bf x}  ( \Psi ( \Gamma_a ( {\mathcal  F}^{\log}_{\mu}) )),
$$
where $\nu - \mu = \sum_{i=1}^n x_i \alpha_i$ and $\Sigma_J = \{ \alpha_1,...,\alpha_n\}$.
The last isomorphism follows from the identities for $z(r,s)$ and Lemma \ref{lemma-loc-psi}.
Therefore
$$\Psi(\Gamma_a(\mathcal F^{\log}_\nu(J)))={D}_{\Sigma_J}^{\bf x}(\Psi(\Gamma_a(\mathcal F^{\log}_{\mu}))).$$

Then using Lemma \ref{lem-loc-map} (iv) and Lemma \ref{lemma-conj} (vi) we have 
$$\operatorname{Hom}_{\mathfrak g}(X,{D}_{\Sigma_J}^{\bf x}\Psi(\Gamma_a(\mathcal F^{\log}_{\mu})))\simeq
\operatorname{Hom}_{\mathfrak g}({D}_{\Sigma_J} (X),{D}_{\Sigma_J}^{\bf x}\Psi(\Gamma_a(\mathcal F^{\log}_{\mu})))\simeq$$
$$\operatorname{Hom}_{\mathfrak g}({D}_{\Sigma_J}^{\bf x}(X),\Psi(\Gamma_a(\mathcal F^{\log}_{\mu}))).$$
Since ${D}_{\Sigma_J}^{\bf x}(\cdot)$ is exact, the statement follows from
Lemma \ref{conncusp}(a).
\end{proof}

Now we are ready to complete the proof of Theorem \ref{connmain}. 
Note first that any simple $S\in \mathcal{GB}^{\gamma(a\varepsilon_0)}$
is isomorphic to $\Psi(S')$ for some simple 
$S'\in {}^{\rm b}(\mathcal D^E, \mathcal H)^{a}-{\rm mod}$ 
by Lemma \ref{conn2}.

Next, by Lemma \ref{conninj}, $\Psi$ maps indecomposable injectives to
indecomposable injectives. Hence Theorem \ref{connmain} follows from Theorem
\ref{antieq}. The statement for weight modules can be easily obtained by
applying the functors $S_{\mathcal H'}$ and $S_{\mathfrak{h}}$.
\end{proof}
\subsection{The singular block for $\mathfrak{sl}(2)$-modules}
Now we  consider the case $n=1$ and $a=-1$.
\begin{proposition} Let $n=1$. The category 
$\mathcal{GB}^{\gamma(-\varepsilon_0)}_{\gamma(-\varepsilon_0)}$ is equivalent to the category
of nilpotent representation of the quiver
$$
\xymatrix{\bullet  \ar@(ul,ur)|{\alpha} \ar@<0.5ex>[r]^{\gamma}& \bullet
\ar@<0.5ex>[l]^{\beta}  \ar@(ul,ur)[]|{\delta}}
$$
with relations $\alpha\beta\gamma=\beta\gamma\alpha$ and
$\delta\gamma\beta=\gamma\beta\delta$,
and the category $\mathcal{B}^{\gamma(-\varepsilon_0)}_{0}$ is 
equivalent to the category
of nilpotent representation of the quiver
$$
\xymatrix{\bullet \ar@<0.5ex>[r]& \bullet
\ar@<0.5ex>[l] }
$$
Moreover, the former category is wild, while the latter is tame.
\end{proposition}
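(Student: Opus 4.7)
My plan is to classify the simples of each block, construct indecomposable injective envelopes, compute the endomorphism algebra of their direct sum, and invoke a Gabriel-type antiequivalence in the spirit of Theorem~\ref{antieq}. First I would identify the simple objects of $\mathcal{GB}^{\gamma(-\varepsilon_0)}_{\gamma(-\varepsilon_0)}$. By Proposition~\ref{tloc-props} together with the primitive-ideal argument of Lemma~\ref{conn2}, every simple in this block is annihilated by $\ker\psi$; equivalently, applying $\Psi\circ\Gamma_{-1}$ to the simples $\mathcal{S}_0(\{0\})$ and $\mathcal{S}_0(\{1\})$ of ${}^{\mathrm b}(\mathcal D^E,\mathcal H)^{-1}_0$-mod (the only proper nonempty subsets of $\{0,1\}$, hence the only ones yielding a nonzero $\Gamma_{-1}$ by Lemma~\ref{twistedint}(a), since $-n-1<a<0$) produces exactly the highest-weight module $L(-1)$ and the lowest-weight module $L^-(1)$, which exhaust the simples of the block.

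For part~(a) I would construct indecomposable injective envelopes $I_+\supset L(-1)$ and $I_-\supset L^-(1)$. The natural candidates $\Psi(\Gamma_{-1}(\mathcal I_0(\{i\})))$ for $i=0,1$ are injective in $\overline{\mathcal{GB}}$ by Lemma~\ref{twisted1}(e) combined with a Frobenius-reciprocity argument as in Proposition~\ref{injective1}, but, in contrast with the generic situation of Theorem~\ref{connmain}, their socles fail to be simple: the obstruction recorded in Remark~\ref{specialcase} (extra simple submodules generated by $\log$-twisted weight vectors) persists in the integral block. I would describe these extra submodules explicitly and extract $I_\pm$ as the correct indecomposable summands. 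Then I would compute $\mathcal E=\operatorname{End}_{\mathfrak g}(I_+\oplus I_-)$: the loops $\alpha$ at $I_+$ and $\delta$ at $I_-$ come from the generalized $\mathfrak h$-action (the analogues of the parameter $z$ of Lemma~\ref{end1}), while the crossing arrows $\beta,\gamma$ arise from the additional $\log$-generated submodules of Remark~\ref{specialcase}, which witness nontrivial $\operatorname{Ext}^1(L(-1),L^-(1))$ and $\operatorname{Ext}^1(L^-(1),L(-1))$ in the singular block. The commutation relations $\alpha\beta\gamma=\beta\gamma\alpha$ and $\delta\gamma\beta=\gamma\beta\delta$ say that each loop commutes with the corresponding round trip, and I would verify them by direct computation on the explicit $\mathcal D^E$-model of $I_\pm$.

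For part~(b) I would pass from generalized weight modules to honest weight modules by applying the $\mathfrak h$-semisimplification functor, in parallel with $S_{\mathcal H'}$ of Section~\ref{subsec-d-weight}. This replaces $I_+\oplus I_-$ by its $\mathfrak h$-semisimple submodule and correspondingly quotients $\mathcal E$ by the ideal generated by $\alpha$ and $\delta$ (because in an honest weight module the Jordan-block deformation is trivialised); what remains is precisely the free path algebra of the Kronecker quiver, with no relations surviving, giving the claimed equivalence for $\mathcal{B}^{\gamma(-\varepsilon_0)}_0$. The wildness of the algebra in (a) follows by a quotient-plus-subcategory argument: set $\beta=0$ (both relations become trivial) and restrict further to representations with $V_1=V_2=V$ and $\gamma=\operatorname{id}_V$; the resulting full subcategory is equivalent to the category of nilpotent finite-dimensional $\mathbb C\langle\alpha,\delta\rangle$-modules, which is wild. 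Tameness of (b) is classical, since the Kronecker quiver is the path algebra of the extended Dynkin diagram $\widetilde A_1$ (see \cite{DF}, \cite{N}).

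The main obstacle will be the second step — the honest construction of $I_+,I_-$ and the exhaustive description of the generators and relations of $\mathcal E$. The failure of Lemma~\ref{lemma-ind-log} in this case (already witnessed by Remark~\ref{specialcase}) means that the streamlined argument used in the proof of Theorem~\ref{connmain} is unavailable: one has to analyse the submodule lattice of $\Psi(\Gamma_{-1}(\mathcal I_0(\{i\})))$ by hand, isolate the extra simple submodules produced by $\log$-twisted vectors, and verify that $\mathcal E$ is generated by $\alpha,\beta,\gamma,\delta$ with \emph{exactly} the two commutation relations above and no additional ones. This is the technical core of the proposition.
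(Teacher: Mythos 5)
Your identification of the simples (the opposite Verma modules in the singular integral block), your recognition that Remark~\ref{specialcase} blocks the generic argument, and your wildness reduction (setting $\beta=0$, which kills both relations; your further restriction to $\gamma=\operatorname{id}$ is one valid way to exhibit the free algebra on two nilpotents) are all consistent with the paper. Your approach to the core step, however, diverges substantially, and this is where the proposal is incomplete: you propose to construct the indecomposable injectives $I_\pm$ by hand from $\Psi(\Gamma_{-1}(\mathcal I_0(\{i\})))$ and then analyse their submodule lattice to read off the relations, and you explicitly flag this as the unfinished ``technical core.'' The paper avoids this hands-on analysis entirely. It first computes $\operatorname{Ext}^1_{\mathcal{GB}}(S_\pm,S_\pm)=\operatorname{Ext}^1_{\mathcal{GB}}(S_\pm,S_\mp)=\mathbb C$ to determine the quiver. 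Then it identifies $J_\pm\simeq D^x_{\pm\alpha}J$, where $J$ is the unique indecomposable injective in the cuspidal block $\overline{\mathcal{GB}}^{\gamma(-\varepsilon_0)}_{\gamma(\nu)}$, whose endomorphism ring $\mathbb C[[z_0,z_1]]$ is already known from \cite{MS}. Lemma~\ref{lemma-conj}(v),(vi) then gives $\operatorname{End}(J_\pm)\simeq\mathbb C[[z_0,z_1]]$ immediately, so after identifying $\beta\gamma$ with (say) $z_0$ and $\alpha$ with $z_1$, the commutation relation $\alpha\beta\gamma=\beta\gamma\alpha$ drops out of commutativity of $\mathbb C[[z_0,z_1]]$, with no need to unravel the $\log$-twisted submodule structure. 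This transport-from-the-cuspidal-block argument is precisely what replaces your step two.

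A similar remark applies to part (b). Your heuristic that ``$\alpha,\delta$ disappear under $\mathfrak h$-semisimplification because the Jordan-block deformation is trivialised'' is in the right direction, but it is not quite a proof: the paper instead observes that the self-extensions vanish, $\operatorname{Ext}^1_{\mathcal B}(S_\pm,S_\pm)=0$, while the cross-extensions remain one-dimensional, and separately that $\operatorname{End}_{\mathfrak{sl}(2)}(J'_\pm)\simeq\mathbb C[z]$ by \cite{GS2} plus the same localization transfer. Together these force the quiver to be the Kronecker quiver with no relations (the single variable $z$ is accounted for by $\beta\gamma$, so no independent loop and no further relation survives). If you wish to pursue your explicit route you would need to make the vanishing of $\operatorname{Ext}^1_{\mathcal B}(S_\pm,S_\pm)$ precise, and for part (a) you would need to actually carry out the submodule-lattice computation you postpone; as written, the argument does not yet establish that $\alpha\beta\gamma=\beta\gamma\alpha$ and $\delta\gamma\beta=\gamma\beta\delta$ are the only relations.
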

\begin{proof} First, consider the cuspidal block 
$\overline{\mathcal{GB}}^{\gamma(-\varepsilon_0)}_{\gamma(\nu)}$. This block has only
one (up to isomorphism) simple object and one indecomposable injective $J$. As follows from \cite{MS},
${\rm End}_{\mathfrak{sl}(2)}(J)=\mathbb C[[z_0,z_1]]$. 
On the other hand, $\overline{\mathcal{GB}}^{\gamma(-\varepsilon_0)}_{\gamma(-\varepsilon_0)}$
has two simple modules $S_+$ and $S_-$ and therefore two
indecomposable injectives  $J_+$ and $J_-$. Note that $S_\pm$ are
Verma modules with respect to the two opposite Borel subalgebras of 
$\mathfrak{sl}(2)$. It is an easy exercise to check that
\begin{equation}\label{exteq}
{\rm Ext}^1_{\mathcal{GB}}(S_+,S_+)={\rm Ext}^1_{\mathcal{GB}}(S_+,S_-)=
{\rm Ext}^1_{\mathcal{GB}}(S_-,S_-)={\rm Ext}^1_{\mathcal{GB}}(S_-,S_+)=
\mathbb C.
\end{equation}
This already implies that the quiver of 
$\mathcal{GB}^{\gamma(-\varepsilon_0)}_{\gamma(-\varepsilon_0)}$ is as above.

To find the relations, we use the fact that
$J_{\pm}=D_{\pm \alpha}^{x} J$ where $\gamma(\varepsilon_0)-\nu=\pm x\alpha $(cf. Lemma \ref{lemma-conj} (v)). Thus, by Lemma \ref{lemma-conj} (vi)
$${ \rm End}_{\mathfrak{sl}(2)}(J_\pm)\simeq
{\rm End}_{\mathfrak{sl}(2)}(J)\simeq\mathbb C[[z_0,z_1]].$$
By (\ref{exteq}) we have 
$\beta\gamma\in {\rm End}_{\mathfrak{sl}(2)}(J_+)=\mathbb C[[z_0,z_1]].$
Therefore after a suitable change of variables, without loss of
generality, we may assume $\beta\gamma=z_0$. Let  $\alpha=z_1$. Then we
have the relation  $\alpha\beta\gamma=\beta\gamma\alpha$.  We define $\delta$ and obtain the second relation in the same way. Clearly, there are no other relations.

To see that the category  $\mathcal{GB}^{\gamma(-\varepsilon_0)}_{\gamma(-\varepsilon_0)}$
is wild set $\beta=0$. Then, the corresponding quotient algebra is the
algebra of the following wild quiver without relations.

$$
\xymatrix{\bullet  \ar@(ul,ur)|{} \ar@<0.5ex>[r]^{}& \bullet
  \ar@(ul,ur)[]|{}}
$$

To prove the statement for $\mathcal{B}^{\gamma(-\varepsilon_0)}_{\gamma(-\varepsilon_0)}$
we first observe that it has the same simple objects as  
$\mathcal{GB}^{\gamma(-\varepsilon_0)}_{\gamma(-\varepsilon_0)}$ and
\begin{equation}\label{exteq1}
{\rm Ext}^1_{\mathcal{B}}(S_+,S_+)={\rm Ext}^1_{\mathcal{B}}(S_-,S_-)=0,\,\,
{\rm Ext}^1_{\mathcal{B}}(S_+,S_-)={\rm Ext}^1_{\mathcal{B}}(S_-,S_+)=
\mathbb C.
\end{equation}
If we denote by $J'_\pm$ the injective hulls of $S_\pm$ in
$\overline{\mathcal{B}}^{\gamma(-\varepsilon_0)}_{\gamma(-\varepsilon_0)}$, then 
${\rm End}_{\mathfrak{sl}(2)}(J'_\pm)\simeq\mathbb C[z]$ by \cite{GS2}
and  Lemma \ref{lemma-conj} (vi) as above.
In this way we obtain the second quiver with no relations, 
that is a well-known tame quiver.
\end{proof}

\bigskip
{\bf Appendix: Some useful commutative diagrams} 

\bigskip

Diagram 1. 
$$
\xymatrix{(\mathcal D, \mathcal H){\rm -mod}  \ar@<0.5ex>[r]^{\Gamma_a} \ar[d]^{S_{\mathcal H'}}& (\mathcal D^E, \mathcal H)^{a}{\rm -mod}   \ar@<0.5ex>[l]^{\Phi, \Phi'} \ar[d]^{S_{\mathcal H'}} \ar[r]^{\Psi}  &  \overline{\mathcal{GB}}^{\gamma(a \varepsilon_0)}{\rm -mod}   \ar[d] ^{S_{\mathfrak h}}\\
{}_{\rm s}(\mathcal D, \mathcal H){\rm -mod}    \ar@<0.5ex>[r]^{\Gamma_a}  & {}_{\rm s}(\mathcal D^E, \mathcal H)^{a}{\rm -mod}    \ar@<0.5ex>[l]^{\Phi, \Phi'}  \ar[r]^{\Psi}  &  \overline{\mathcal{B}}^{\gamma(a \varepsilon_0)}{\rm -mod}  }
$$

\medskip
Diagram 2.

$$
\xymatrix{  ^{\rm b}(\mathcal D^E, \mathcal H)^{a}{\rm -mod} \ar[d]^{S_{\mathcal H'}} \ar[r]^{\Psi}  &  \mathcal{GB}^{\gamma(a \varepsilon_0)}{\rm -mod}   \ar[d] ^{S_{\mathfrak h}}\\
{}_{\rm s}^{\rm b}(\mathcal D^E, \mathcal H)^{a}{\rm -mod}    \ar[r]^{\Psi}  &  \mathcal{B}^{\gamma(a \varepsilon_0)}{\rm -mod}  }
$$

Diagram 3.
$$
\xymatrix{(\mathcal D, \mathcal H)_{\nu}{\rm -mod}  \ar@<0.5ex>[r]^{\Gamma_a} \ar[d]^{D_{i,j}^x}& (\mathcal D^E, \mathcal H)_{\nu}^{a}{\rm -mod}   \ar@<0.5ex>[l]^{\Phi, \Phi'} \ar[d]^{D_{i,j}^x} \ar[r]^{\Psi}  &  \overline{\mathcal{GB}}_{\gamma(\nu)}^{\gamma(a \varepsilon_0)}{\rm -mod}   \ar[d] ^{D_{\varepsilon_i - \varepsilon_j}^x}\\
(\mathcal D, \mathcal H)_{\mu}{\rm -mod}   \ar@<0.5ex>[r]^{\Gamma_a}  & (\mathcal D^E, \mathcal H)^a_{\mu}{\rm -mod}   \ar@<0.5ex>[l]^{\Phi, \Phi'}  \ar[r]^{\Psi}  &  \overline{\mathcal{GB}}_{\gamma(\mu)}^{\gamma(a \varepsilon_0)}{\rm -mod}  }
$$

where $\mu = \nu+x(\varepsilon_i - \varepsilon_j)$.

\bibliography{ref,outref,mathsci}
\def\cprime{$'$} \def\cprime{$'$} \def\cprime{$'$} \def\cprime{$'$}
  \def\cprime{$'$}
\providecommand{\bysame}{\leavevmode\hbox
to3em{\hrulefill}\thinspace}
\providecommand{\MR}{\relax\ifhmode\unskip\space\fi MR }
\providecommand{\MRhref}[2]{%
  \href{http://www.ams.org/mathscinet-getitem?mr=#1}{#2}
} \providecommand{\href}[2]{#2}

\end{document}